\def\var{\mathop{\rm var}\nolimits}
\def\1{\mathbf{1}}
\def\lle{\lesssim}
\def\bar{\overline}
\newtheorem{theorem}{Theorem}[section]
\newtheorem{lemma}[theorem]{Lemma}
\newtheorem{assumption}[theorem]{Assumption}
\numberwithin{equation}{section}
\begin{document}

\begin{frontmatter}

\title{Bayes procedures for adaptive inference in  inverse problems for the white noise model}

\runtitle{Bayesian adaptation in inverse problems}


\author{\fnms{B.T.} \snm{Knapik}\ead[label=e1]{b.t.knapik@vu.nl}},
\thankstext{t1}{Research supported by Netherlands Organization for Scientific Research NWO}
\thankstext{t2}{This version: \today}
\address{Department of Mathematics\\ VU University Amsterdam\\ \printead{e1}}
\author{\fnms{B.T.} \snm{Szab\'o}\corref{}\ead[label=e2]{b.szabo@tue.nl}},
\address{Department of Mathematics\\ Eindhoven University of Technology\\ \printead{e2}}
\author{A.W. van der Vaart\ead[label=e3]{avdvaart@math.leidenuniv.nl}}
\address{Mathematical Institute \\Leiden University\\ \printead{e3}}\\
\and
\author{J.H. van Zanten\ead[label=e4]{hvzanten@uva.nl}}
\address{Korteweg-de Vries institute for Mathematics\\ University of Amsterdam\\ \printead{e4}}
\affiliation{VU University Amsterdam, Eindhoven University of Technology, Leiden University and University of Amsterdam}
\runauthor{Knapik, Szab\'o, Van der Vaart and Van Zanten}

\bigskip

\begin{abstract}
We study empirical and hierarchical Bayes approaches to the problem of estimating an infinite-dimensional parameter
in mildly ill-posed inverse problems. We consider a class of prior distributions indexed by a hyperparameter
that quantifies regularity. We prove that both methods we consider succeed in automatically selecting
this parameter optimally, resulting in optimal convergence rates for truths with  Sobolev or analytic ``smoothness'',
without using knowledge about this regularity. Both methods are illustrated by simulation examples.
\end{abstract}

%
%

\end{frontmatter}

\maketitle

\section{Introduction}

In recent years, Bayesian approaches have become more and more common in dealing with
nonparametric statistical inverse problems.
Such problems arise in many fields of applied science, including
geophysics, genomics, medical image analysis and astronomy, to mention but a few.
In nonparametric inverse problems some form of regularization is usually needed in order
to estimate the (typically functional) parameter of interest.
One possible explanation of the increasing popularity of Bayesian methods is the
fact that assigning a prior distribution to an unknown  functional parameter
is a natural way of specifying a degree of regularization.
Probably at least as important is the fact that various computational methods exist to carry out the inference
in practice, including MCMC methods and approximate methods like expectation propagation, Laplace approximations
and approximate Bayesian computation.
A third important aspect that appeals to users of Bayes methods is that
an implementation of a  Bayesian procedure typically produces not only an estimate of the unknown quantity of interest (usually
a posterior mean or mode), but also a large number of samples from the whole posterior distribution. These can then be used
to report a credible set, i.e.\ a set of parameter values that receives a large fixed fraction of the posterior mass, that serves as a
quantification of the uncertainty in the estimate. Some examples of papers  using Bayesian methods in nonparametric  inverse
problems in various applied settings include \cite{Gao}, \cite{Orbanz}, \cite{Lashkari}, \cite{Oh}, \cite{Bailer}. The paper \cite{Stuart}  provides a nice overview and many additional references.

Work on  the fundamental properties  of Bayes procedures for nonparametric inverse problems, like consistency, (optimal) convergence
rates, etcetera, has only started to appear  recently.  The few papers in this area include \cite{KVZ}, \cite{KVZHeat}, \cite{Simoni},
\cite{Griek}.
This is in sharp contrast with the work on frequentist methodology,
which is quite well developed. See for instance the overviews given by L.\ Cavalier \cite{Cavalier}, \cite{Cavalier2}.

Our focus in this paper is on the ability of Bayesian methods to achieve adaptive, rate-optimal inference in so-called mildly ill-posed
nonparametric inverse problems (in the terminology of, e.g., \cite{Cavalier}).
Nonparametric priors typically involve one or more tuning parameters, or hyper-parameters, that determine
the degree of regularization. In practice there is  widespread use of empirical Bayes and full, hierarchical Bayes methods
to automatically select the appropriate values of such parameters.
These methods are generally considered to be preferable to methods that use only a single, fixed value of the
hyper-parameters. In the inverse problem setting it is known from the recent paper \cite{KVZ} that using a fixed prior can indeed be
undesirable, since it can lead to convergence rates that are sub-optimal, unless by chance the statistician has
selected  a prior that  captures the fine properties of the unknown parameter (like its degree of smoothness,
if it is a function).  Theoretical work that supports the preference for empirical or hierarchical Bayes methods
does not exist at the present time however. It has until now been unknown whether these approaches can indeed robustify a procedure
against prior mismatch. In this paper we answer this question in the affirmative. We show
that empirical and hierarchical Bayes methods can lead to adaptive, rate-optimal procedures in the context of nonparametric
inverse problems, provided they are properly constructed.

We study this problem in the context of the canonical signal-in-white-noise model, or, equivalently, the
infinite-dimensional normal mean model. Using singular value decompositions many nonparametric, linear
inverse problems can be cast in this form (e.g.\ \cite{Cavalier2}, \cite{KVZ}).
Specifically, we assume that we observe a sequence of noisy coefficients $Y = (Y_1, Y_2, \ldots)$ satisfying
\begin{equation}\label{eq: ModelSeq}
Y_i = \k_i \mu_i + \frac{1}{\sqrt{n}}Z_i, \qquad i = 1, 2\ldots,
\end{equation}
where $Z_1, Z_2, \ldots$ are independent, standard normal random variables, $\mu = (\mu_1, \mu_2, \ldots) \in \ell_2$
is the infinite-dimensional parameter of interest, and $(\k_i)$ is a known sequence  that may converge to $0$ as $i \to \infty$,
which complicates the inference. We suppose the problem is  mildly ill-posed of order $p \ge 0$, in the sense that
\begin{equation}\label{eq: kappa}
C^{-1}i^{-p} \leq \k_i \leq Ci^{-p}, \qquad i = 1, 2\ldots,
\end{equation}
for some $C \geq 1$. Minimax lower bounds for the rate of convergence of estimators for $\mu$  are well known
in this setting. For instance, the lower  bound over Sobolev balls of regularity $\beta > 0$ is given by $n^{-\b/(1+2\b + 2p)}$
and over certain ``analytic balls'' the lower bound is of the order $n^{-1/2}\log^{1/2+p} n$ (see \cite{Cavalier}).
There are several regularization methods which attain these rates, including classical Tikhonov regularization
and Bayes procedures with Gaussian priors.

Many of the older existing methods for nonparametric inverse problems are not adaptive, in the sense that they  rely on knowledge of the regularity (e.g.\
in Sobolev sense) of the unknown parameter of interest to select the appropriate regularization.
This also holds  for the Bayesian approach with fixed Gaussian priors.
Early papers on the direct problem, i.e.\ the case $p=0$ in \eqref{eq: kappa},
include \cite{Zhao}, \cite{Shen}. The more recent papers
 \cite{KVZ} and \cite{Griek} study the inverse problem case, but also obtain non-adaptive results only. In the last decade however,  several methods have been developed in frequentist literature that achieve the minimax
convergence rate
without  knowledge of the regularity of the truth. This development parallels the earlier work on adaptive methods for the direct nonparametric problem
 to some extent, although the inverse case is technically usually more demanding.
The adaptive methods typically involve a data-driven choice of a tuning parameter in order
to automatically achieve an optimal bias-variance trade-off, as in Lepski's method for instance.

For nonparametric inverse problems, the construction of an adaptive estimator based on a properly penalized blockwise Stein's rule has been studied in \cite{CavTsy}, cf.\ also \cite{Cai}. This estimator is  adaptive both over Sobolev and analytic scales. In \cite{CavURE} the data-driven choice of the regularizing parameters is based on unbiased risk estimation. The authors consider projection estimators and
derive the corresponding oracle inequalities. For $\mu$ in the Sobolev scale they obtain asymptotically sharp adaptation in a minimax sense, whereas for $\mu$ in analytic  scale, their rate is optimal up to a logarithmic term.
Yet another  approach to adaptation in inverse problems is the risk hull method studied in \cite{CavRH}.
In this paper the authors consider spectral cut-off estimators  and provide oracle inequalities. An extension of their approach is presented in \cite{MarteauRH}. The link between the penalized blockwise Stein's rule and the risk hull method  is presented in \cite{MarteauHull}.

Adaptation properties of  Bayes procedures for mildly ill-posed nonparametric inverse problems have until now not been studied
in the literature. Results are only available for the direct problem, i.e.\ the case that $\k_i=1$ for every $i$, or, equivalently,
$p = 0$ in \eqref{eq: kappa}. In the paper \cite{Belitser} it is shown that in this case adaptive Bayesian inference is possible using a hierarchical,
conditionally Gaussian prior. Other recent papers also exhibit priors that yield rate-adaptive procedures in the
direct signal-in-white-noise problem (see for instance \cite{vdVvZAdaptive},  \cite{JZ}, \cite{SG}), but it is important to note that these papers use general theorems
on  contraction rates for posterior distributions (as given in \cite{GVnoniid} for instance) that are {\em not} suitable to deal with
 the truly ill-posed case in which $k_i \to 0$ as $i \to \infty$.
The reason is that if these general theorems are applied in the inverse
case, we only obtain convergence rates relative to the (squared) norm $\mu \mapsto \sum \k^2_i\mu_i^2$, which is not very interesting.
Obtaining rates relative to the $\ell_2$-norm is much more involved and requires  a different approach.
Extending the testing approach of \cite{GGvdV}, \cite{GVnoniid} would be one possibility, cf.\ the recent work of \cite{Ray}, although
it seems difficult to obtain sharp results in this manner.
In this paper we follow a more pragmatic approach, relying on partly explicit computations in a relatively tractable setting.

To obtain rate-adaptive Bayes procedures for the model \eqref{eq: ModelSeq} we consider a  family
$(\Pi_\a: \a > 0)$ of Gaussian priors for the parameter $\mu$. These priors are indexed by a parameter $\a > 0$ which
quantifies the ``regularity'' of the prior $\Pi_\a$ (details in Section \ref{sec: General}).
Instead of choosing a fixed value for $\a$ (which is the approach studied in
\cite{KVZ}) we view it as a tuning-, or hyper-parameter and consider two different methods for selecting it in a data-driven manner.
The  approach typically preferred by Bayesian statisticians is to endow the hyper-parameter with a prior distribution itself. This
results in a full, hierarchical Bayes procedure. The paper \cite{Belitser}  follows the same approach in the direct problem.
We prove that under a mild assumption on the hyper-prior on $\a$, we obtain  an adaptive procedure for the inverse problem
using the hierarchical prior.
Optimal convergence rates are obtained (up to lower order factors), uniformly over Sobolev and analytic scales.
For tractability, the priors $\Pi_\a$ that we use put independent, Gaussian prior weights on the coefficients
$\mu_i$ in \eqref{eq: ModelSeq}. Extensions to more general priors, including non-Gaussian densities or priors that are
not exactly diagonal (as in \cite{Ray} for instance) should be possible, but would require  considerable additional
technical work.

A second approach we study consists in first ``estimating'' $\a$ from the data and then substituting the estimator
$\hat\a_n$  for $\a$ in the posterior distribution for $\mu$  corresponding to the prior $\Pi_\a$.
This empirical Bayes procedure is not really Bayesian in the strict sense of the word. However, for computational
reasons empirical Bayes methods of this type are widely used in practice, making it relevant to study their theoretical performance.
Rigorous results about the asymptotic behavior of empirical Bayes selectors of hyper-parameters in infinite-dimensional
problems only exist for a limited number of special problems, see e.g.\ \cite{BelitserEB},
\cite{Zhang}, \cite{JohnstoneSilverman}, \cite{JS2005}.
In this paper we prove that the likelihood-based empirical Bayes method that we propose has the same desirable adaptation and rate-optimality properties in nonparametric inverse problems as the hierarchical Bayes approach.

The estimator $\hat\a_n$ for $\a$ that we propose is the commonly used likelihood-based empirical Bayes
estimator for  the hyper-parameter.
Concretely, it is the maximum likelihood estimator for $\a$ in the model
in which the data is generated by first drawing $\mu$ from $\Pi_\a$ and then generating $Y = (Y_1, Y_2, \ldots)$
according to \eqref{eq: ModelSeq}, i.e.\
\begin{equation}\label{eq: bs}
\mu\given\a \sim \Pi_\a, \qquad \text{and} \qquad Y\given (\mu,
\a) \sim\bigotimes_{i=1}^\infty {N}\Bigl(\k_i\mu_i,\frac{1}{n}\Bigr).
\end{equation}
A crucial element in the proof of the adaptation properties of both procedures we consider is understanding the
asymptotic behavior of $\hat\a_n$.
In contrast to the typical situation in parametric models (see \cite{JSC})
this turns out to be rather delicate, since the likelihood for $\a$
can have complicated behavior.
We are able however to derive deterministic asymptotic lower and upper bounds for $\hat\a_n$. In general these depend on the
true parameter $\mu_0$ in a very complicated way. To get some insight into  why our procedures work we show that if the true
parameter has nice regular behavior of the form $\mu_{0,i} \asymp i^{-1/2-\beta}$ for some $\beta > 0$, then
$\hat\a_n$ is essentially a consistent estimator for $\beta$ (see Lemma \ref{lem: abar}).
This means that in some sense, the estimator $\hat\a_n$ correctly ``estimates the regularity'' of the
true parameter (see \cite{BelitserEB} for  work in a similar direction).
 Since the empirical Bayes procedure basically chooses the data-dependent prior $\Pi_{\hat\a_n}$
for $\mu$, this means that asymptotically, the procedure automatically succeeds in selecting among the
priors $\Pi_\a, {\a > 0},$ the one for which the regularity of the prior and the truth are matched.
This results in an optimal bias-variance trade-off and hence in optimal convergence rates.

The remainder of the paper is organized as follows.
In Section~\ref{sec: General} we first describe the empirical and hierarchical Bayes procedures in detail.
Then we present a theorem on the asymptotic behavior of estimator $\hat\a_n$ for the hyper-parameter, followed by two results on the adaptation and rate of contraction of the empirical and hierarchical Bayes posteriors over Sobolev and analytic scales.
These results  all concern global $\ell_2$-loss.
In Section \ref{sec: functionals} we briefly comment on rates relative to other losses.
Specifically we discuss contraction rates of marginal posteriors for linear functionals
of the parameter $\mu$. We conjecture that the procedures that we prove to be adaptive and rate-optimal
for global $\ell_2$-loss, will be sub-optimal for estimating certain unbounded linear functionals.
A detailed study of this issue is outside the scope of the present paper.
The empirical and hierarchical Bayes approaches are  illustrated numerically in Section~\ref{sec: Example}. We apply them to
simulated data from an inverse signal-in-white-noise problem, where the problem is to recover
a signal from a noisy observation of its primitive.
Proofs of the main results are presented in Sections~\ref{sec: abar}--\ref{sec: ProofHB}. Some auxiliary
lemmas are collected in Section~\ref{sec: Appendix}.

\subsection{Notation}
For $\b, \g \ge 0$, the Sobolev norm  $\|\mu\|_\b$, the analytic norm $\|\mu\|_{A^\g}$ and the $\ell_2$-norm $\|\mu\|$ of
an element $\mu \in \ell_2$ are defined by
\[
\|\mu\|_\beta^2 = \sum_{i=1}^\infty i^{2\beta}\mu_i^2, \qquad \|\mu\|^2 = \sum_{i=1}^\infty \mu_i^2, \qquad \|\mu\|^2_{A^\g} = \sum_{i=1}^\infty e^{2\g i}\mu_i^2,
\]
and the corresponding Sobolev space by $S^\beta = \{\mu \in \ell_2: \|\mu\|_\beta < \infty\}$, and the analytic space by $A^\g = \{\mu \in \ell_2: \|\mu\|_{A^\g} < \infty\}$.

For two sequences $(a_n)$ and $(b_n)$ of numbers,  $a_n \asymp b_n$ means that $|a_n/b_n|$ is bounded away from zero and infinity as $n \to \infty$, $a_n \lesssim b_n$ means that $a_n/b_n$ is bounded, $a_n \sim b_n$ means that $a_n/b_n \to 1$ as $n \to \infty$, and $a_n \ll b_n$ means that $a_n / b_n \to 0$ as $n \to \infty$. For two real numbers $a$ and $b$, we denote by $a \vee b$ their maximum, and by $a \wedge b$ their minimum.

\section{Main results}\label{sec: General}

\subsection{Description of the empirical and hierarchical Bayes procedures}

We assume that we observe the sequence of noisy coefficients $Y = (Y_1, Y_2, \ldots)$ satisfying
\eqref{eq: ModelSeq}, for $Z_1, Z_2, \ldots$ independent, standard normal random variables, $\mu = (\mu_1, \mu_2, \ldots) \in \ell_2$, and a known  sequence $(\k_i)$ satisfying \eqref{eq: kappa}
for some $p \geq 0$ and $C \geq 1$. We denote the distribution of the sequence $Y$ corresponding to
 the ``true''  parameter $\mu_0$ by $\Pr_0$, and the corresponding expectation by $\E_0$.

For $\a > 0$, consider the product prior $\Pi_\a$ on $\ell_2$ given by
\begin{equation}\label{eq: prior}
\Pi_{\a}=\bigotimes _{i=1}^{\infty}N\bigl(0,i^{-1-2\a}\bigr).
\end{equation}
It is easy to see that this prior is ``$\a$-regular'', in the sense that for every $\a' < \a$, it assigns mass $1$ to the
Sobolev space $S^{\a'}$. In \cite{KVZ} it was proved that if for the true parameter $\mu_0$ we have $\mu_0 \in S^\b$
for $\b > 0$, then the posterior distribution corresponding to the Gaussian prior $\Pi_\a$ contracts around $\mu_0$
at the optimal rate $n^{-\b/(1+2\b + 2p)}$ if $\a =\b$. If $\a \not = \b$, only sub-optimal rates are  attained in general (cf.\ \cite{Ismael}).
In other words, when using a Gaussian prior with a fixed regularity, optimal convergence rates are obtained if and only if the
regularity of the prior and the truth are matched. Since the latter is unknown however, choosing the prior that is
optimal from the point of view of convergence rates is typically not possible in practice.
Therefore,  we consider two data-driven methods
for selecting the regularity of the prior.

The first is a likelihood-based empirical Bayes method, which attempts to estimate the appropriate value
of the hyper-parameter $\a$ from the data.
 In the Bayesian setting described by the conditional distributions (\ref{eq: bs}),
it holds that
\[
Y \given \a \sim \bigotimes_{i=1}^{\infty}{N}\Bigl(0,i^{-1-2\a}\k_i^{2}+ \frac{1}{n}\Bigr).
\]
The corresponding log-likelihood for $\a$ (relative to an infinite
product of $N(0,1/n)$-distributions) is easily seen to be given by
\begin{equation}\label{eq: ell}
\ell_n(\a)=-\frac{1}{2}\sum_{i=1}^{\infty}\Big( \log\Big(1+\frac{n}{i^{1+2\a}\k_i^{-2}}\Big)-\frac{n^2} {i^{1+2\a}\k_i^{-2}+n}Y_i^2 \Big).
\end{equation}
The idea is to ``estimate'' $\a$ by the maximizer of $\ell_n$.
The results ahead (Lemma \ref{lem: abar} and Theorem \ref{thm: AlphaMagnitude}) imply that  with $\Pr_0$-probability tending to one, $\ell_n$ has a global maximum
on $[0, \log n)$ if  $\mu_{0, i} \not = 0$ for some $i \ge 2$.
(In fact, the cited results imply the maximum  is attained on the slightly smaller interval $[0, (\log n)/(2\log 2) - 1/2-p]$).
If the latter condition is not satisfied (if $\mu_0 = 0 $ for instance),
$\ell_n$ may attain its maximum only at $\infty$. Therefore, we truncate the maximizer at $\log n $ and define
\[
\hat \a _n = \argmax_{\a \in [0, \log n]} \ell_n(\a).
\]
The continuity of  $\ell_n$ ensures the  $\argmax$  exists. If it  is not unique, any value may be chosen.
We will always assume at least that $\mu_0$ has Sobolev regularity of some order $\beta > 0$.
Lemma \ref{lem: abar} and Theorem \ref{thm: AlphaMagnitude} imply that in this case
$\hat\a_n > 0$ with probability tending to $1$.
An alternative to the truncation of the argmax of $\ell_n$ at $\log n$ could be to extend the definition of
the priors $\Pi_\a$ to include the case $\a =\infty$. The prior $\Pi_\infty$ should then be defined as the
product $N(0,1) \otimes \delta_0 \otimes \delta_0 \otimes \cdots$, with $\delta_0$ the Dirac measure
concentrated at $0$. However, from a practical perspective
it is more convenient to define $\hat\a_n$ as above.

The empirical Bayes procedure  consists in computing the posterior distribution of $\mu$ corresponding to a
fixed prior $\Pi_\a$ and then substituting $\hat\a_n$ for $\a$.
Under the model described above and the prior \eqref{eq: prior} the coordinates $(\mu_{0,i}, Y_i)$ of the vector $(\mu_0, Y)$ are independent, and hence the conditional distribution of $\mu_0$ given $Y$ factorizes over the coordinates as well. The computation of the posterior distribution reduces to countably many posterior computations in conjugate normal models. Therefore (see also \cite{KVZ}) the posterior distribution corresponding to the prior $\Pi_\a$ is given by
\begin{equation}\label{eq: PostDist}
\Pi_\a(\, \cdot\, \given Y) = \bigotimes_{i=1}^{\infty}{N}\Big(\frac{n\k_i^{-1}}{i^{1+2\a}\k_i^{-2} + n}Y_i,
\frac{\k_i^{-2}}{i^{1+2\a}\k_i^{-2}+n}\Big).
\end{equation}
Then the {\em empirical Bayes posterior} is  the random measure $\Pi_{\hat\a_n}(\, \cdot\,  \given Y)$ defined by
\begin{equation}\label{eq: ebpost}
\Pi_{\hat\a_n}(B \given Y) =
\Pi_{\a}(B \given Y) \Big|_{\a = \hat\a_n}\\
\end{equation}
for measurable subsets $B \subset \ell_2$.
Note that the construction of the empirical Bayes posterior does not use information about the regularity of the true parameter.
In Theorem \ref{thm: ConvergenceEB} below we prove that it contracts around the truth at an optimal rate (up to lower order factors),
uniformly over Sobolev and analytic scales.


The  second method we consider is a full, hierarchical Bayes approach where we put a prior
distribution on the hyper-parameter $\a$. We use a prior on $\a$ with a positive Lebesgue density $\l$ on $(0,\infty)$.
The full, hierarchical prior for $\mu$ is then given by
\begin{equation}\label{eq: hp}
\Pi = \int_{0}^\infty \l(\a)\Pi_\a\, d\a.
\end{equation}
In Theorem \ref{thm: ConvergenceHB} below we prove that under mild assumptions on the prior density $\l$,
the corresponding posterior distribution $\Pi(\, \cdot  \, \given Y)$ has the same desirable
asymptotic properties as the empirical Bayes posterior \eqref{eq: ebpost}.

\subsection{Adaptation and contraction rates for the full parameter}
\label{sec: main}

Understanding of the asymptotic behavior of the maximum likelihood estimator $\hat\a_n$
is a crucial element in our proofs of the contraction rate results for the empirical and hierarchical Bayes procedures.
The estimator somehow ``estimates'' the regularity of the true parameter $\mu_0$, but in a rather indirect and involved manner
in general.
Our first theorem gives deterministic upper and lower bounds for $\hat\a_n$, whose construction involves  the
function   $h_n:(0,\infty)\to [0,\infty)$ defined by
\begin{equation}\label{eq: h}
h_n(\a)=\frac{1+2\a+2p}{n^{1/(1+2\a+2p)}\log n}\sum_{i=1}^{\infty}\frac{n^2i^{1+2\a} \mu_{0,i}^2\log i}{(i^{1+2\a}\k_i^{-2}+n)^2}.
\end{equation}
For positive constants $0 < l<L$ we define the  lower and upper bounds as
\begin{align}
\underline{\a}_n&=\inf\{\a>0: h_n(\a)>l\}\wedge\sqrt{\log n}\label{eq: LB}, \\
\overline{\a}_n&=\inf\{\a>0: h_n(\a)>L(\log n)^2\}\label{eq: UB}.
\end{align}

One can see that the function $h_n$ and hence the lower and upper bounds $\underline\a_n$ and $\overline\a_n$ depend on
the true $\mu_0$. We show in Theorem \ref{thm: AlphaMagnitude} that the maximum likelihood estimator $\hat\a_n$ is between these bounds with probability tending to one. In general the true $\mu_0$ can have very complicated tail behavior,
which makes it difficult to understand the behavior of the
upper and lower bounds. If $\mu_0$ has regular tails however, we can get some insight in
the nature of the bounds. We have the following lemma, proved in Section \ref{sec: abar}.

\begin{lemma}\label{lem: abar}
For any $l, L > 0$ in the definitions \eqref{eq: LB}--\eqref{eq: UB} the following statements hold.
\begin{enumerate}
\item[(i)]
For all $\beta , R > 0$, there exists $c_0 > 0$ such that
\[
\inf_{\|\mu_0\|_\beta \le R} \underline\a_n \ge \beta -\frac{c_0}{\log n}
\]
for $n$ large enough.
\item[(ii)]
For all $\gamma , R > 0$,
\[
\inf_{\|\mu_0\|_{A^\g} \le R} \underline\a_n \ge \frac{\sqrt{\log n}}{\log\log n}
 \]
 for $n$ large enough.
\item[(iii)]
If $\mu_{0,i}\geq ci^{-\g-1/2}$ for some $c, \g > 0$, then for a constant $C_0 > 0$ only depending on $c$ and $\g$,
we have $\overline{\a}_n \leq \g + C_0({\log\log n})/{\log n}$ for all $n$ large enough.
\item[(iv)]
If $\mu_{0,i} \not = 0$ for some $i \ge 2$, then $\bar\a_n \le  (\log n)/(2\log 2) - 1/2- p$ for $n$ large enough.
\end{enumerate}
\end{lemma}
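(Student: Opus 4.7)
The plan is to analyze $h_n(\alpha)$ by splitting each summand according to the transition index $i_n(\alpha):=n^{1/(1+2\alpha+2p)}$, at which $i^{1+2\alpha}\kappa_i^{-2}\asymp n$ (using $\kappa_i^{-2}\asymp i^{2p}$ from \eqref{eq: kappa}). For $i\le i_n(\alpha)$ the summand is of order $i^{1+2\alpha}\mu_{0,i}^2\log i$, and for $i>i_n(\alpha)$ it is of order $n^2 i^{-1-2\alpha-4p}\mu_{0,i}^2\log i$; the dominant contribution always comes from the matching scale $i\sim i_n(\alpha)$. Parts (i)--(ii) will proceed via an upper bound on $h_n(\alpha)$ that is monotone increasing in $\alpha$, while (iii)--(iv) will proceed by exhibiting a single $\alpha$ at which a simple lower bound on $h_n(\alpha)$ exceeds the relevant threshold.

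For part (i), I would write $a_i:=i^{2\beta}\mu_{0,i}^2$ so that $\sum_i a_i=\|\mu_0\|_\beta^2\le R^2$, and express the $i$-th summand of $h_n(\alpha)$ as $a_i c_i(\alpha)$. Using the two-regime estimate, the supremum over $i$ of $c_i(\alpha)$ is attained near $i\asymp i_n(\alpha)$ and (for $\alpha>\beta-1/2$; smaller $\alpha$ give strictly smaller bounds) equals $C(\beta,p)\cdot n^{(2\alpha-2\beta)/(1+2\alpha+2p)}$ up to a constant. Hence
\[
h_n(\alpha)\le C_1(\beta,p)\,R^2\,n^{(2\alpha-2\beta)/(1+2\alpha+2p)},
\]
and this bound is strictly increasing in $\alpha$. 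Setting $\alpha=\beta-c_0/\log n$ and forcing the right-hand side $\le l$ gives $c_0 = \tfrac{1+2\beta+2p}{2}\log(C_1 R^2/l)+o(1)$; by monotonicity of the bound, $h_n(\alpha)\le l$ on the whole interval $(0,\beta-c_0/\log n]$, which is (i). For (ii), the exponential decay $\mu_{0,i}^2\le R^2 e^{-2\gamma i}$ makes the crude bound $n^2/(i^{1+2\alpha}\kappa_i^{-2}+n)^2\le 1$ sufficient, yielding
\[
h_n(\alpha)\le\frac{(1+2\alpha+2p)R^2}{n^{1/(1+2\alpha+2p)}\log n}\sum_i i^{1+2\alpha}e^{-2\gamma i}\log i.
\]
The sum is of Laplace type with peak at $i^\star\asymp(1+2\alpha)/(2\gamma)$. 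For $\alpha=\sqrt{\log n}/\log\log n$ the sum is $\exp(O(\sqrt{\log n}))$, while the prefactor carries $n^{-1/(1+2\alpha+2p)}=\exp\!\big(-\sqrt{\log n}\log\log n(1+o(1))\big)$; the product is $o(1)$, and the same bound holds on $(0,\sqrt{\log n}/\log\log n]$ by monotonicity in $\alpha$, giving (ii).

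For part (iii), the hypothesis $\mu_{0,i}^2\ge c^2 i^{-1-2\gamma}$ lets me restrict the sum defining $h_n$ to the dyadic block $i\in[i_n(\alpha),2i_n(\alpha)]$, on which the denominator is $\asymp n^2$. Each of the $\asymp i_n(\alpha)$ terms in that block is $\ge c^2 i_n(\alpha)^{2\alpha-2\gamma}\log i_n(\alpha)$, so the block sum is $\gtrsim c^2 n^{(1+2\alpha-2\gamma)/(1+2\alpha+2p)}\log i_n(\alpha)$, and after the prefactor
\[
h_n(\alpha)\gtrsim c^2\,n^{(2\alpha-2\gamma)/(1+2\alpha+2p)}.
\]
Requiring this to exceed $L(\log n)^2$ and inverting yields $\alpha\le \gamma+C_0\log\log n/\log n$ for any $C_0>1+2\gamma+2p$ once $n$ is large enough. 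For (iv), pick $i_0\ge 2$ with $\mu_{0,i_0}\ne 0$ and set $\alpha_0:=\log n/(2\log i_0)-1/2-p$, so that $i_n(\alpha_0)=i_0$ exactly. The single term $i=i_0$ contributes at least $n\mu_{0,i_0}^2\log i_0/(4 i_0^{2p})$ to the sum, which after multiplication by the prefactor $(1+2\alpha_0+2p)/(n^{1/(1+2\alpha_0+2p)}\log n) = 1/(i_0\log i_0)$ still grows linearly in $n$; it exceeds $L(\log n)^2$ for large $n$. Since $i_0\ge 2$ gives $\alpha_0\le \log n/(2\log 2)-1/2-p$, this proves (iv).

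The main obstacle is that $h_n$ itself need not be monotone in $\alpha$: the conclusions of (i) and (ii) require $h_n(\alpha)\le l$ on an entire initial interval, not merely at a chosen endpoint, and this forces the explicit upper bounds to be arranged as monotone functions of $\alpha$. A secondary technical point is controlling the $\log i$ factor near the transition $i\asymp i_n(\alpha)$, which is handled by bounding $\log i\le \log i_n(\alpha)$ on the lower regime and exploiting monotonicity of $\log i/i^q$ for $i$ large on the upper tail.
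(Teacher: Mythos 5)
Your proof is correct and follows essentially the same strategy as the paper's: upper-bounding $h_n$ on the whole initial interval via the transition index $i_n(\a)=n^{1/(1+2\a+2p)}$ and the Sobolev or analytic norm for (i)--(ii), and exhibiting a single $\a$ (namely $\g+C_0\log\log n/\log n$, resp.\ the $\a_0$ with $i_0^{1+2\a_0+2p}=n$) where $h_n$ exceeds $L(\log n)^2$ for (iii)--(iv). The only differences are organizational (supremum of the coefficient sequence versus splitting the sum, a dyadic block versus an integral comparison), and the bounds obtained coincide with the paper's.
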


We note that items (i) and (iii) of the lemma imply that if
$\mu_{0,i} \asymp i^{-1/2-\beta}$, then the interval $[\underline\a_n, \bar\a_n]$ concentrates around
the value $\beta$ asymptotically. In combination with Theorem \ref{thm: AlphaMagnitude} this shows that at least in this regular case,
$\hat\a_n$ correctly estimates the regularity of the truth. The same is true in the analytic case, since item
(ii) of the lemma shows that $\underline\a_n \to \infty$ in that case, i.e.\ asymptotically, the procedure
detects the fact that $\mu_0$ has infinite regularity.

Item (iv) implies  that if $\mu_{0,i} \not = 0$ for some $i \ge 2$, then $\bar\a_n < \infty$ for large $n$.
Conversely, the definitions of $h_n$ and $\bar\a_n$ show that if $\mu_{0,i} = 0$ for
all $i \ge 2$, then $h_n \equiv 0$ and hence $\bar\a_n = \infty$.

The following theorem asserts that the point(s) where $\ell_n$ is maximal is (are) asymptotically between the bounds just defined,
uniformly over Sobolev and analytic scales.
The proof is given in Section \ref{sec: AlphaMagnitude}.

\begin{theorem}\label{thm: AlphaMagnitude}
For every $R > 0$
the constants $l$ and $L$ in \eqref{eq: LB} and \eqref{eq: UB} can be chosen such that
\[
\inf_{\mu_0\in\B(R)} \Pr_0 \Big(\argmax_{\a \in [0,\log n]} \ell_n(\a) \in [\underline{\a}_n, \overline{\a}_n]\Big) \to 1,
\]
where
$\B(R) = \{\mu_0\in\ell_2:\|\mu_0\|_\b\leq R\}$ or $\B(R) = \{\mu_0\in\ell_2:\|\mu_0\|_{A^\g}\leq R\}$.
\end{theorem}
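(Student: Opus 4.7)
The plan is to analyze the score function $\ell_n'(\a)$ and show that every maximizer of $\ell_n$ on $[0,\log n]$ lies in $[\underline\a_n,\overline\a_n]$ with $\Pr_0$-probability tending to one, uniformly over $\mu_0\in\B(R)$. Differentiating \eqref{eq: ell} term by term, and writing $s_i(\a):=i^{1+2\a}\k_i^{-2}$,
\[
\ell_n'(\a)=\sum_{i=1}^\infty\frac{n^2\log i}{(s_i(\a)+n)^2}-\sum_{i=1}^\infty\frac{n^2 s_i(\a)Y_i^2\log i}{(s_i(\a)+n)^2}.
\]
Taking $\Pr_0$-expectation via $\E_0 Y_i^2=\k_i^2\mu_{0,i}^2+1/n$ and using $\k_i^2 s_i(\a)=i^{1+2\a}$ to recombine the variance piece yields $\E_0\ell_n'(\a)=V_n(\a)-B_n(\a)$, where $V_n(\a)=\sum_i n^2\log i/(s_i(\a)+n)^2$ and $B_n(\a)=\sum_i n^2 i^{1+2\a}\mu_{0,i}^2\log i/(s_i(\a)+n)^2$. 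By the mildly ill-posed assumption \eqref{eq: kappa} and comparison with an integral, $V_n(\a)\asymp(1+2\a+2p)^{-1}n^{1/(1+2\a+2p)}\log n$ uniformly over $\a\in[0,\log n]$; by definition \eqref{eq: h}, $B_n(\a)=(1+2\a+2p)^{-1}n^{1/(1+2\a+2p)}(\log n)\cdot h_n(\a)$. Hence there are positive constants $c_1,c_2$ with $c_1V_n(\a)(1-c_2 h_n(\a))\le\E_0\ell_n'(\a)\le c_2V_n(\a)(1-c_1 h_n(\a))$. Choosing $l$ small and $L$ large in \eqref{eq: LB}--\eqref{eq: UB}, this makes $\E_0\ell_n'(\a)$ comparable to $+V_n(\a)$ on $[0,\underline\a_n)$ and to $-V_n(\a)(\log n)^2$ at $\a=\overline\a_n$.

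Second, I would control the stochastic remainder $R_n(\a):=\ell_n'(\a)-\E_0\ell_n'(\a)$. Expanding $Y_i^2-\E_0 Y_i^2=(Z_i^2-1)/n+2n^{-1/2}\k_i\mu_{0,i}Z_i$ exhibits $R_n(\a)$ as a centered chi-squared series plus a mean-zero Gaussian series with coefficients $n s_i\log i/(s_i+n)^2$ and $n^{3/2}\k_i\mu_{0,i}s_i\log i/(s_i+n)^2$ respectively. A direct computation shows $\var R_n(\a)\lesssim V_n(\a)\log n/(1+2\a+2p)$, the $\mu_0$-dependent bound being controlled via $\sum i^{1+2\b}\mu_{0,i}^2\le R^2$ (or the analytic analogue). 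To lift this to a uniform-in-$\a$ estimate on $[0,\log n]$, I would discretize $\a$ on a grid of spacing $1/\log n$ (polynomially many points in $\log n$), apply Gaussian and $\chi^2$ tail bounds with a union bound, and interpolate via the crude Lipschitz estimate $|R_n(\a)-R_n(\a')|\lesssim(\log n)^C|\a-\a'|\,V_n(\a)$ obtained from $\partial_\a s_i=2 s_i\log i$. This yields $\sup_{\a\in[0,\log n]}|R_n(\a)|/V_n(\a)\to 0$ in $\Pr_0$-probability, uniformly over $\mu_0\in\B(R)$.

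Combining the two pieces: on $[0,\underline\a_n)$ we have $h_n(\a)\le l$, hence $\E_0\ell_n'(\a)\gtrsim V_n(\a)$ dominates $|R_n(\a)|$ with high probability, so $\ell_n'(\a)>0$ and no maximizer can live in this range. The harder direction is to rule out the $\argmax$ lying in $(\overline\a_n,\log n]$, because $h_n$ need not stay above $L(\log n)^2$ once $\a$ crosses $\overline\a_n$ when $\mu_0$ is irregular, so $\E_0\ell_n'(\a)$ may change sign again. To circumvent this I would compare $\ell_n(\a)$ with $\ell_n(\overline\a_n)$ directly rather than via the derivative, by writing $\ell_n(\a)-\ell_n(\overline\a_n)$ as a coordinate sum and splitting the $i$-axis at the effective cut-off $i^\star\asymp n^{1/(1+2\overline\a_n+2p)}$; the block of indices just above $i^\star$ contributes a strictly negative deterministic increment of size at least $V_n(\overline\a_n)(\log n)^2$ once $h_n(\overline\a_n)\ge L(\log n)^2$, which, combined with the same concentration estimates and the deterministic bound $\overline\a_n\le(\log n)/(2\log 2)-1/2-p$ from Lemma \ref{lem: abar}(iv), forces the $\argmax$ into $[\underline\a_n,\overline\a_n]$.

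The main obstacle is exactly this last step: the non-monotonicity of $h_n$ in $\a$ for irregular $\mu_0$ prevents a purely derivative-based argument, so careful bookkeeping of the bias contributions block by block in $i$ is needed, with constants tracked uniformly in $\mu_0\in\B(R)$.
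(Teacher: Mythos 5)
Your overall architecture matches the paper's: show the score $\MM_n=\ell_n'$ is strictly positive on $(0,\underline\a_n)$ (there $h_n\le l$, so the ``variance'' term $V_n(\a)=\sum n^2\log i/(s_i(\a)+n)^2$ dominates the bias term $V_n(\a)h_n(\a)$), control the centered process by a chaining/entropy bound, and treat the region beyond $\overline\a_n$ by a comparison of function values rather than signs of the derivative. You have also correctly identified the crux: $h_n$ need not stay large past $\overline\a_n$, so $\E_0\MM_n$ may turn positive again there.

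However, the mechanism you propose for the right tail does not work as stated. You claim that for every $\a>\overline\a_n$ the increment $\ell_n(\a)-\ell_n(\overline\a_n)$ carries a deterministic negative contribution of size at least $V_n(\overline\a_n)(\log n)^2$. By continuity of $\ell_n$ (and of $\E_0\ell_n$) in $\a$, this increment tends to $0$ as $\a\downarrow\overline\a_n$, so no such uniform lower bound on the drop can hold; and the definition of $\overline\a_n$ as an \emph{infimum} gives you no control of $h_n$, hence of the sign of $\E_0\MM_n$, on any right-neighbourhood $(\overline\a_n,\overline\a_n+\d)$. Consequently your argument leaves open the possibility that the argmax sits just above $\overline\a_n$. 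The paper closes exactly this gap by working on the \emph{left} of $\overline\a_n$: a separate lemma (Lemma \ref{lem: hbounds}) shows $h_n(\a)\gtrsim h_n(\overline\a_n)\ge L(\log n)^2$ for all $\a\in[\overline\a_n-1/\log n,\overline\a_n]$, so that $\MM_n\le -M V_n(\overline\a_n)(\log n)^2$ there with high probability and $\ell_n$ drops by at least a constant times $M V_n(\overline\a_n)\log n$ over that interval. This drop is then compared with the \emph{almost sure} bound $\int_{\overline\a_n}^\a\MM_n\le\int_{\overline\a_n}^\a\sum_i n\log i/(s_i(\g)+n)\,d\g\lesssim V_n(\overline\a_n)\log n$, obtained simply by discarding the nonpositive $Y_i^2$-term of $\MM_n$ (no concentration needed on $[\overline\a_n,\infty)$, which also spares you the chaining over that unbounded range). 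Taking $L$, hence $M$, large makes the guaranteed descent on $[\overline\a_n-1/\log n,\overline\a_n]$ exceed the maximal possible ascent on $[\overline\a_n,\infty)$, so every $\a\ge\overline\a_n$ satisfies $\ell_n(\a)<\ell_n(\overline\a_n-1/\log n)$ and cannot be the maximizer. To repair your proof you would need both ingredients: the left-neighbourhood lower bound on $h_n$ and the a.s.\ upper bound on the total increase of $\ell_n$ to the right of $\overline\a_n$.
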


With the help of Theorem~\ref{thm: AlphaMagnitude} we can prove the following theorem, which states that the empirical Bayes posterior distribution \eqref{eq: ebpost} achieves optimal minimax contraction rates up to a slowly varying factor,  uniformly
 over Sobolev and analytic scales. We note that posterior contraction at a rate $\e_n$
 implies the existence of estimators, based on the posterior, that converge at the same rate. See for instance
 the construction in Section 4 of \cite{Belitser}.

\begin{theorem}\label{thm: ConvergenceEB}
For every $\beta, \gamma, R > 0$ and $M_n \to \infty$ we have
\[
\sup_{\|\mu_0\|_\beta \le R} \E_0 \Pi_{\hat{\a}_n}\bigl( \|\mu-\mu_0\| \geq M_nL_nn^{-\b/(1+2\b+2p)}\, \big|\, Y\bigr) \to 0
\]
and
\[
\sup_{\|\mu_0\|_{A^\g} \le R} \E_0 \Pi_{\hat{\a}_n}\bigl( \|\mu-\mu_0\| \geq M_nL_n(\log n)^{1/2+p}n^{-1/2}\, \big|\, Y\bigr) \to 0,
\]
where $(L_n)$ is a slowly varying sequence.
\end{theorem}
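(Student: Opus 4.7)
The plan is to combine Theorem~\ref{thm: AlphaMagnitude}---which confines $\hat\a_n$ to the deterministic interval $[\underline\a_n,\bar\a_n]$ with $\Pr_0$-probability tending to one, uniformly over $\B(R)$---with direct analysis of the conjugate-normal posterior \eqref{eq: PostDist}. Let $E_n=\{\hat\a_n\in[\underline\a_n,\bar\a_n]\}$. On $E_n^c$ the posterior probability is at most $1$ and its $\Pr_0$-probability vanishes, so it suffices to bound the expectation on $E_n$. There Markov's inequality yields
\[
\Pi_{\hat\a_n}\bigl(\|\mu-\mu_0\|\geq r\,\big|\,Y\bigr)\leq r^{-2}\bigl(\|m_{\hat\a_n}(Y)-\mu_0\|^2 + V_{\hat\a_n}\bigr),
\]
where $m_\a(Y)$ is the posterior mean vector and $V_\a=\sum_i \k_i^{-2}/(i^{1+2\a}\k_i^{-2}+n)$ the trace of the posterior covariance, both read off \eqref{eq: PostDist}.

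The key structural observation is that the coordinatewise coefficients in $m_\a(Y)-\mu_0$ are monotone in $\a$: the signal-damping factor $i^{1+2\a}\k_i^{-2}/(i^{1+2\a}\k_i^{-2}+n)$ multiplying $\mu_{0,i}$ is increasing in $\a$, whereas the noise-amplification factor $n^{1/2}\k_i^{-1}/(i^{1+2\a}\k_i^{-2}+n)$ and $V_\a$ itself are decreasing. Combined with $(a+b)^2\leq 2a^2+2b^2$ this yields, on $E_n$,
\[
\|m_{\hat\a_n}(Y)-\mu_0\|^2+V_{\hat\a_n}\leq 2B(\bar\a_n) + 2\sum_{i=1}^\infty \frac{n\k_i^{-2} Z_i^2}{(i^{1+2\underline\a_n}\k_i^{-2}+n)^2} + V(\underline\a_n),
\]
where $B(\a)=\sum_i\bigl(i^{1+2\a}\k_i^{-2}/(i^{1+2\a}\k_i^{-2}+n)\bigr)^2\mu_{0,i}^2$ is the deterministic squared bias. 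Taking $\E_0$ and using $\k_i\asymp i^{-p}$, the last two terms are of order $n^{-2\underline\a_n/(1+2\underline\a_n+2p)}$. Invoking the lower bounds on $\underline\a_n$ from Lemma~\ref{lem: abar}(i)--(ii) and a first-order Taylor expansion of $\a\mapsto 2\a/(1+2\a+2p)$, these match the announced rates $n^{-2\b/(1+2\b+2p)}$ (Sobolev) and $(\log n)^{1+2p} n^{-1}$ (analytic) up to a constant absorbable into $L_n$.

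The hard part will be bounding $B(\bar\a_n)$. Lemma~\ref{lem: abar} does not in general constrain $\bar\a_n$ from above in terms of $\b$ for $\mu_0\in\B(R)$---part (iv) only gives the crude $\bar\a_n=O(\log n)$---so a direct Taylor expansion as for the variance bound fails. Instead I would use the defining inequality $h_n(\bar\a_n)\leq L(\log n)^2$ from \eqref{eq: UB} itself, which via \eqref{eq: h} reads
\[
\sum_{i=1}^\infty \frac{n^2 i^{1+2\bar\a_n}\log i}{(i^{1+2\bar\a_n}\k_i^{-2}+n)^2}\mu_{0,i}^2 \lesssim (\log n)^3\, n^{1/(1+2\bar\a_n+2p)},
\]
and split $B(\bar\a_n)$ at the crossover frequency $N=n^{1/(1+2\bar\a_n+2p)}$. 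For $i\leq N$, a term-by-term comparison with the sum above (the ratio of the two summands is $\asymp i^{1+2\bar\a_n+4p}/(n^2\log i)$, bounded by $n^{-(1+2\bar\a_n)/(1+2\bar\a_n+2p)}/\log n$ on this range) yields a contribution of order $(\log n)^3 n^{-2\bar\a_n/(1+2\bar\a_n+2p)}$. For $i>N$, the smoothness assumption gives $\sum_{i>N}\mu_{0,i}^2\leq N^{-2\b}R^2$ in the Sobolev case (and $e^{-2\g N}R^2$ in the analytic case). Using $\bar\a_n\geq\underline\a_n\geq\b-c_0/\log n$ to ensure that the exponent of $n$ in both pieces deteriorates by at most $O(1/\log n)$ relative to $2\b/(1+2\b+2p)$, the whole bound matches $L_n^2\, n^{-2\b/(1+2\b+2p)}$ with $L_n$ slowly varying of order $(\log n)^{3/2}$.

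Plugging these estimates into the Markov bound with $r=M_n L_n\epsilon_n^*$ concludes the proof uniformly over $\mu_0\in\B(R)$. The main obstacle is the bias step: translating the implicit constraint $h_n(\bar\a_n)\leq L(\log n)^2$ into an $\ell_2$-bias bound at the target rate even when $\bar\a_n$ is not uniformly close to $\b$, which requires a delicate splicing of the low-frequency regime (controlled by $h_n$) with the high-frequency regime (controlled by the smoothness norm), and explains why $L_n$ is slowly varying rather than constant.
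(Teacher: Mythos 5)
Your reduction to the interval $[\underline\a_n,\bar\a_n]$ via Theorem~\ref{thm: AlphaMagnitude} and Markov's inequality matches the paper, and your monotonicity observation is a genuine (and attractive) simplification of one half of the argument: because the noise coefficients $n\k_i^{-2}/(i^{1+2\a}\k_i^{-2}+n)^2$ and the posterior spread are coordinatewise decreasing in $\a$ and multiply the nonnegative quantities $Z_i^2$, the supremum over $\a\ge\underline\a_n$ of the stochastic part is attained at the deterministic point $\underline\a_n$, so you can take expectations there directly. The paper instead centers the full posterior risk (whose cross term $\sum \mu_{0,i}Z_i(\cdots)$ is not monotone) and controls the supremum over $\a$ by a chaining argument (Section~\ref{sec: brisk}, via Corollary 2.2.5 of \cite{vdVW}); your $(a+b)^2\le 2a^2+2b^2$ decomposition avoids that machinery entirely. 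The variance/spread bounds at $\underline\a_n$ and the appeal to Lemma~\ref{lem: abar}(i)--(ii) are correct.

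The gap is in the bias term $B(\bar\a_n)$, exactly where you anticipated trouble, and your proposed fix does not close it. Splitting at $N=n^{1/(1+2\bar\a_n+2p)}$ and bounding the high-frequency piece by $\sum_{i>N}\mu_{0,i}^2\le R^2N^{-2\b}=R^2n^{-2\b/(1+2\bar\a_n+2p)}$ only recovers the target rate when $\bar\a_n\le\b+O(\log\log n/\log n)$. But for $\mu_0$ in the Sobolev ball, $\bar\a_n$ can be arbitrarily much larger than $\b$ (Lemma~\ref{lem: abar}(iii) requires a pointwise lower bound $\mu_{0,i}\ge ci^{-\g-1/2}$ that membership in the ball does not supply; e.g.\ a signal supported on few frequencies, or one smoother than $\b$, gives $\bar\a_n\gg\b$ or even $\bar\a_n=\infty$, whence $N=O(1)$ and your bound degenerates to the constant $R^2\cdot O(1)$). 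The lower bound $\bar\a_n\ge\underline\a_n\ge\b-c_0/\log n$ that you invoke controls the low-frequency exponent $2\bar\a_n/(1+2\bar\a_n+2p)$ but does nothing for the exponent $2\b/(1+2\bar\a_n+2p)$, which deteriorates as $\bar\a_n$ grows. The frequencies $n^{1/(1+2\bar\a_n+2p)}<i\le n^{1/(1+2\b+2p)}$ are the problem: there the term-by-term comparison with $h_n(\bar\a_n)$ fails (the ratio $i^{1+2\bar\a_n+4p}/(n^2\log i)$ exceeds one and can be huge), and the Sobolev tail bound is too weak. The paper's resolution (Section~\ref{sec: 61}) is to split at the \emph{oracle} frequency $n^{1/(1+2\b+2p)}$, handle the tail beyond it by the Sobolev norm, and then partition the intermediate range into $J\asymp(\log n)(\log\log n)$ blocks $[n^{1/b_j},n^{1/b_{j+1}}]$ with $b_j=1+2\bar\a_n(1+1/\log n)^{-j}+2p$, bounding each block by $h_n(b_{j+1}/2-1/2-p)\le L(\log n)^2$ — i.e.\ using the defining constraint on $h_n$ at a whole family of intermediate smoothness levels between $\b$ and $\bar\a_n$, not just at $\bar\a_n$. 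This multiscale splicing is the essential missing ingredient (and the source of the $\log\log n$ in $L_n$); without it, or some substitute, your argument does not yield a uniform bound over the Sobolev ball.
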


So indeed we see that both in the Sobolev and analytic cases, we obtain
the optimal minimax rates up to a slowly varying factor. The proofs of the statements (given in  Section \ref{sec: ProofSob})
show that in the first case we can take $L_n = (\log n)^{2}(\log\log n)^{1/2}$ and in the second case
$L_n = (\log n)^{(1/2+p)\sqrt{\log n}/2+1-p}(\log\log n)^{1/2}$.
These sequences converge to infinity but they are slowly varying, hence they converge slower than any power of $n$.

%

%

The full Bayes procedure using the hierarchical prior \eqref{eq: hp} achieves
the same results as the empirical Bayes method, under mild assumptions on the prior density $\l$ for $\a$.

\begin{assumption}\label{ass: Hyperprior}
%
{
Assume that for every $c_1 > 0$ there exist $c_2 \geq 0$, $c_3 \in \RR$, with $c_3>1$ if $c_2=0$, and $c_4>0$ such that
\[
c_4^{-1}\a^{-c_3} \exp(-c_2\a) \leq \l(\a) \leq c_4\a^{-c_3} \exp(-c_2\a)
\]
for $\a \geq c_1$.
}
\end{assumption}

One can see that a many distributions satisfy this  assumption,
{for instance the exponential, gamma and inverse gamma distributions.}
Careful inspection of the proof of the following theorem, given in Section \ref{sec: ProofHB}, can lead to weaker assumptions, although these will be less attractive to formulate. Recall the notation $\Pi(\,\cdot\,\given Y)$ for the posterior corresponding to the
hierarchical prior (\ref{eq: hp}).

\begin{theorem}\label{thm: ConvergenceHB}
Suppose the prior  density $\l$ satisfies Assumption~\ref{ass: Hyperprior}.
Then for every $\beta, \gamma, R > 0$ and $M_n \to \infty$ we have
\[
\sup_{\|\mu_0\|_\beta \le R} \E_0 \Pi\bigl( \|\mu-\mu_0\| \geq M_nL_nn^{-\b/(1+2\b+2p)}\, \big|\, Y\bigr) \to 0
\]
and
\[
\sup_{\|\mu_0\|_{A^\g} \le R} \E_0 \Pi\bigl( \|\mu-\mu_0\| \geq M_nL_n(\log n)^{1/2+p}n^{-1/2}\, \big|\, Y\bigr) \to 0,
\]
where $(L_n)$ is a slowly varying sequence. \end{theorem}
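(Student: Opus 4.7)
The plan is to reduce the statement to the empirical Bayes result \textbf{Theorem \ref{thm: ConvergenceEB}} by showing that the marginal posterior of the hyperparameter $\a$ concentrates on a neighborhood of the interval $[\underline{\a}_n,\overline{\a}_n]$ supplied by Theorem \ref{thm: AlphaMagnitude}. Writing $r_n$ for the target contraction rate (either $n^{-\b/(1+2\b+2p)}$ or $(\log n)^{1/2+p}n^{-1/2}$) and using that the hierarchical posterior is the mixture
\[
\Pi(B\given Y)=\int_0^\infty \Pi_\a(B\given Y)\,\pi_n(\a\given Y)\,d\a,\qquad \pi_n(\a\given Y)=\frac{\l(\a)e^{\ell_n(\a)}}{\int_0^\infty \l(\a')e^{\ell_n(\a')}\,d\a'},
\]
one has, for the slightly enlarged interval $I_n=[\underline{\a}_n-\delta_n,\overline{\a}_n+\delta_n]$ with a suitable $\delta_n\to 0$,
\[
\E_0\Pi(\|\mu-\mu_0\|\ge M_nL_nr_n\given Y)\le \sup_{\a\in I_n}\E_0\Pi_\a(\|\mu-\mu_0\|\ge M_nL_nr_n\given Y)+\E_0\Pi(\a\notin I_n\given Y).
\]

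For the first term I would redo the bias--variance calculation already performed in proving Theorem \ref{thm: ConvergenceEB}, but uniformly in $\a\in I_n$. Starting from the explicit product form \eqref{eq: PostDist}, the posterior second moment $\E_0\Pi_\a(\|\mu-\mu_0\|^2\given Y)$ decomposes into a variance part and a squared-bias part, each of which, by the definition of $h_n$ in \eqref{eq: h} together with $\mu_0\in S^\b$ (respectively $\mu_0\in A^\g$), can be bounded by $(L_n r_n)^2$ uniformly for $\a$ on $I_n$. The perturbation by $\delta_n$ and the worst-case value of $\a$ in $I_n$ contribute only a slowly varying factor, which is absorbed into $L_n$.

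For the second term I control the marginal posterior on $\a$ by a Markov-type argument: for any anchor $\a^*\in[\underline{\a}_n,\overline{\a}_n]$ and a small ball $J_n$ around $\a^*$ of Lebesgue measure $\delta_n/2$,
\[
\Pi(\a\notin I_n\given Y)\le \frac{\int_{I_n^c}\l(\a)e^{\ell_n(\a)-\ell_n(\a^*)}\,d\a}{\int_{J_n}\l(\a)e^{\ell_n(\a)-\ell_n(\a^*)}\,d\a}.
\]
Quantitative upper and lower bounds on the increments $\ell_n(\a)-\ell_n(\a^*)$ -- the very same estimates that yield the localization of $\argmax\ell_n$ in Theorem \ref{thm: AlphaMagnitude} -- show that with $\Pr_0$-probability tending to one, $\ell_n(\a)-\ell_n(\a^*)\to-\infty$ fast enough (at least of order $(\log n)^2$) for $\a$ outside $I_n$, while remaining $o(1)$ on $J_n$. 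Assumption \ref{ass: Hyperprior} guarantees that the ratio $\l(\a)/\l(\a^*)$ can neither blow up nor vanish faster than an exponential on the tails, so the $\l$-factor is dominated by the likelihood drop; consequently $\E_0\Pi(\a\notin I_n\given Y)\to 0$ uniformly over the $R$-ball in either $S^\b$ or $A^\g$.

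The principal obstacle is the quantitative, uniform control of $\ell_n(\a)-\ell_n(\a^*)$ outside $I_n$: Theorem \ref{thm: AlphaMagnitude} only asserts that the argmax lies in the good interval, whereas here one needs a genuine quantitative drop of the log-likelihood, holding uniformly in $\mu_0$ across the Sobolev (respectively analytic) ball. This requires a careful second-order analysis of $\ell_n$ near $\a^*$ using the explicit summand form in \eqref{eq: ell} together with concentration of the Gaussian quadratic terms $Y_i^2$. The analytic case is the most delicate because $\underline{\a}_n$ itself diverges (item (ii) of Lemma \ref{lem: abar}), so the available window and the scale of the log-likelihood drop both depend on $n$ in a coupled way, forcing one to exploit the lower bound $\bar\a_n\le(\log n)/(2\log 2)-1/2-p$ from item (iv) to keep the denominator under control. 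Once these log-likelihood increment bounds are in place, combining the two terms yields the theorem with the same slowly varying $L_n$ as in Theorem \ref{thm: ConvergenceEB}.
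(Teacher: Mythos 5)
Your proposal is correct and follows essentially the same route as the paper's proof: decompose the hierarchical posterior as a mixture, bound the contribution from $\a$ in the good interval by the uniform fixed-$\a$ risk bounds already established for Theorem \ref{thm: ConvergenceEB}, and kill the posterior mass of $\a$ outside that interval by comparing $\l(\a)e^{\ell_n(\a)}$ against its value on a shrinking anchor interval, using the quantitative bounds on $\MM_n=\ell_n'$ from Sections \ref{sec: OverA}--\ref{sec: UnderA} together with Assumption \ref{ass: Hyperprior} (via Lemma \ref{lem: Hyperprior}) to show the likelihood drop dominates the prior ratio. The only organisational difference is that the paper does not enlarge the interval upward but instead splits into the sets $P_n$ and $Q_n$, handling large $\overline{\a}_n$ by bounding the fixed-$\a$ risk uniformly over $[\underline{\a}_n,\infty)$ rather than the posterior mass of the upper tail.
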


The hierarchical Bayes method thus yields exactly the same rates as the empirical method, and therefore the interpretation of this theorem is the same as before.
We note that already in the direct case $p=0$ this theorem is an interesting extension
of the existing results of \cite{Belitser}. In particular we find that using hierarchical Bayes we can adapt to a continuous
range of Sobolev regularities while incurring only a logarithmic correction of the optimal rate.


\subsection{Discussion on linear functionals}
\label{sec: functionals}

It is known already in the non-adaptive situation that for attaining optimal rates
relative to losses other than the $\ell_2$-norm, it may  be necessary to set the hyperparameter to a value
different from the optimal choice for $\ell_2$-recovery of the full parameter $\mu$.
If we are for instance interested in optimal estimation of the (possibly unbounded)
linear functional
\begin{equation}\label{eq: l}
L\mu = \sum l_i \mu_i,
\end{equation}
where $l_i \asymp i^{-q-1/2}$ for some $q < p$, then if $\mu_0 \in S^\beta$ for $\beta > -q$ the optimal Gaussian prior \eqref{eq: prior} is not
$\Pi_\beta$, but rather $\Pi_{\beta-1/2}$. The resulting, optimal rate is of the order $n^{-(\beta+q)/(2\beta+2p)}$
(see \cite{KVZ}, Section 5).

An example of this phenomenon occurs when considering global $L_2$-loss estimation of a function
versus pointwise estimation.
If for instance the $\mu_i$ are the Fourier coefficients of a smooth function of interest $f \in L^2[0,1]$
relative to the standard Fourier basis $e_i$ and for a fixed $t \in [0,1]$, $l_i = e_i(t)$,
then estimating $\mu$ relative to $\ell_2$-loss corresponds to estimating $f$ relative to $L_2$-loss and
estimating the functional $L\mu$ in \eqref{eq: l} corresponds to pointwise estimation of $f$ in the point $t$
(in this case $q = -1/2)$.

Theorems \ref{thm: ConvergenceEB} and \ref{thm: ConvergenceHB} show that
the empirical and hierarchical Bayes procedures automatically achieve a
bias-variance-posterior spread trade-off that is optimal for the recovery
of the full parameter $\mu_0$ relative to the global $\ell_2$-norm.
As conjectured in a similar setting in \cite{KVZ} this suggests that the adaptive approaches might be sub-optimal
outside the $\ell_2$-setting. In view of the findings in the non-adaptive case
we might expect however that we can slightly alter the procedures to deal with linear functionals.
For instance, it is natural to expect that for the linear functional
\eqref{eq: l}, the empirical Bayes posterior $\Pi_{\hat\a_n-1/2}(\cdot\given Y)$ yields  optimal rates.

Matters seem to be more delicate however.
A combination of  elements of the proof of Theorem 5.1 of
\cite{KVZ} and new results on the coverage of credible sets from the forthcoming paper
\cite{SVZ} lead us to conjecture that for every $\beta > -q$
there exists a $\theta_0 \in S^\beta$ such that along a subsequence $n_j$,
\begin{align*}
\E_{0}\Pi_{\hat\a_{n_j}-1/2}\bigl(\mu:\, |L\mu_0-L\mu|\geq m n_j^{-(\beta+q)/(1+2\beta+2p)}\given Y\bigr) \to 1,
\end{align*}
as $j \to \infty$ for a positive, small enough constant $m > 0$.
In other words,  there always exist ``bad truths'' for which the adjusted empirical Bayes procedure
converges at a sub-optimal rate along a subsequence.
For linear functionals  \eqref{eq: l}
the empirical Bayes posterior $\Pi_{\hat\a_n-1/2}(\cdot\given Y)$ seems only to contract
at an optimal rate for ``sufficiently nice'' truths, for instance of the form $\mu_{0, i} \asymp i^{-1/2-\beta}$.

Similar statements are expected to hold for hierarchical Bayes procedures. This adds
to the list of remarkable behaviours of marginal posteriors for linear functionals, cf.\
also \cite{Rivoirard}, for instance. Further research is necessary to shed more light on these matters.

\section{Numerical illustration}\label{sec: Example}

Consider the inverse signal-in-white-noise problem where we observe the process
$(Y_t: t\in [0,1])$ given by
\[
Y_t = \int_0^t\int_0^s\mu(u)\, du\, ds + \frac1{\sqrt n}W_t,
\]
with $W$ a standard Brownian motion, and the aim is to recover the function $\mu$.
If, slightly abusing notation,  we define $Y_i = \int_0^1 e_i(t)\,dY_t$, for $e_i$ the orthonormal basis functions
given by $e_i(t) = \sqrt{2}\cos((i-1/2)\pi t)$, then it is easily verified that the observations
$Y_i$ satisfy (\ref{eq: ModelSeq}), with
$\k_i^2 = ({(i-1/2)^2\pi^2})^{-1}$, i.e.\ $p=1$ in \eqref{eq: kappa}, and $\mu_i$ the Fourier
coefficients of $\mu$ relative to the basis $e_i$.

We consider simulated data from this model for $\mu_0$ the function with
Fourier coefficients $\mu_{0,i} = i^{-3/2}\sin(i)$, so we have a truth which essentially has regularity $1$.
In the following figure we plot the true function $\mu_0$ (black curve) and the empirical Bayes posterior mean (red curve) in the left panels, and the corresponding normalized likelihood
$\exp({\ell_n})/\max(\exp({\ell_n}))$ in the right panels (we truncated the sum in (\ref{eq: ell}) at a high level). Figure~\ref{fig: 1}
shows the results for the empirical Bayes procedure with simulated data for
 $n= 10^3, 10^5, 10^7, 10^9$, and $10^{11}$, from  top to  bottom.
The figure shows that the estimator $\hat\a_n$ does a good job in this case at estimating
the regularity level $1$, at least for large enough $n$. We also see however that due to the ill-posedness
of the problem, a large signal-to-noise ratio $n$ is necessary for accurate recovery of the function $\mu$.


\begin{figure}
\centerline{\includegraphics[width=7.5cm]{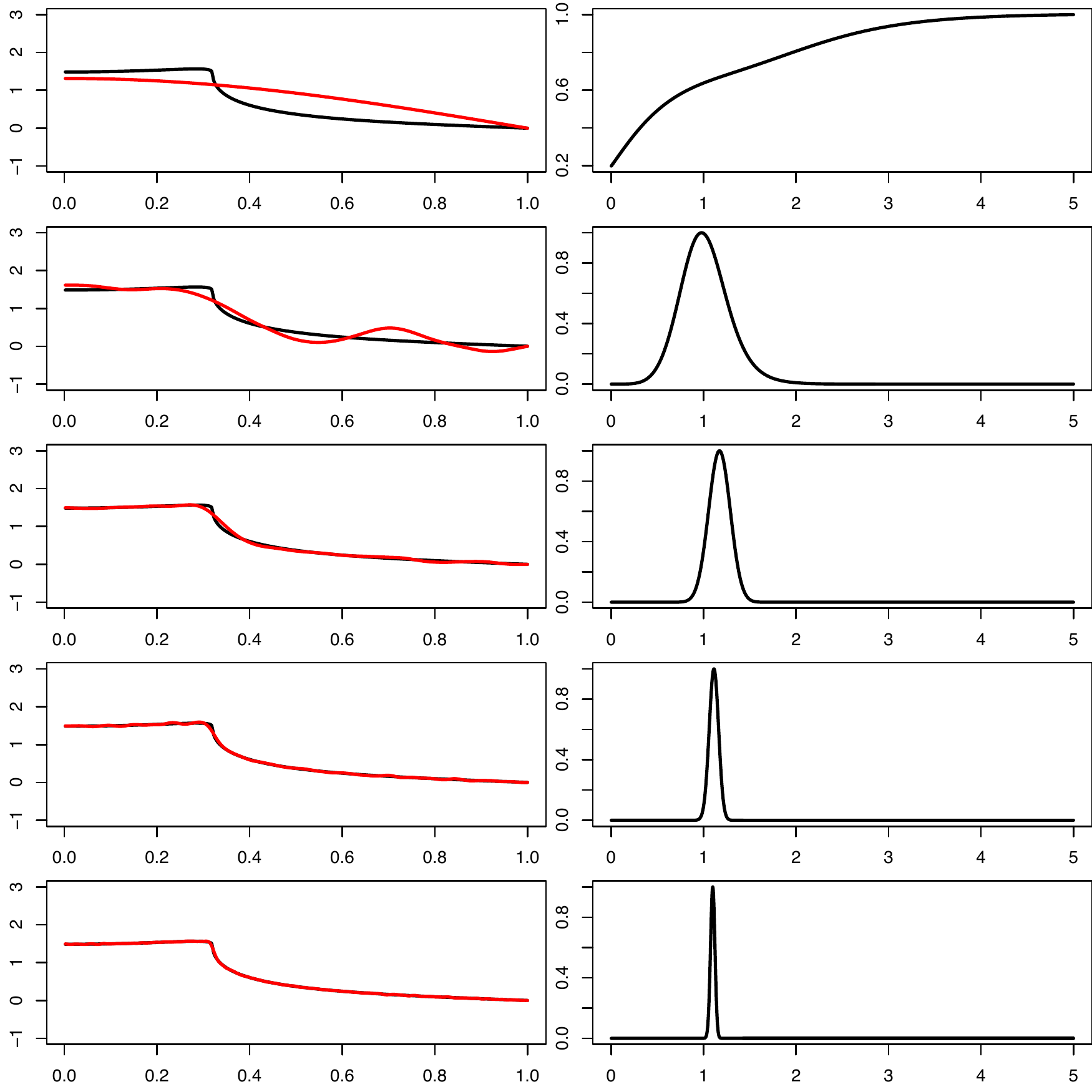}}
\caption{Left panels:  the empirical Bayes posterior mean (red) and the true curve (black). Right panels: corresponding normalized likelihood for $\a$. We have $n= 10^3, 10^5, 10^7, 10^9$, and $10^{11}$, from top to bottom.}
\label{fig: 1}
\end{figure}

%
%
%

We  applied the  hierarchical Bayes method to the simulated data as well.
We chose a standard exponential prior distribution on $\a$, which satisfies Assumption \ref{ass: Hyperprior}.
Since the posterior can not be computed explicitly, we implemented an
MCMC algorithm that generates (approximate) draws from the posterior distribution of
 the pair $(\a, \mu)$. More precisely, we  fixed a large index $J \in \NN$
 and defined the vector
 $\mu^J = (\mu_1, \ldots, \mu_J)$  consisting of the first $J$ coefficients of $\mu$.
(If $\mu$ has positive Sobolev regularity, then taking  $J$ at least of the order $n^{1/(1+2p)}$
ensures that the approximation error $\|\mu^J - \mu\|$ is of lower order than the
estimation rate.)
 Then we
devised a Metropolis-within-Gibbs  algorithm for sampling from the posterior distribution of $(\a, \mu^J)$
(e.g.\ \cite{Tier}).
The algorithm  alternates between draws from the conditional distribution $\mu^J \given \a, Y$  and the conditional distribution $\a \given \mu^J, Y$. The former is explicitly given by \eqref{eq: PostDist}. To sample from
$\a \given \mu^J, Y$ we used a standard Metropolis-Hastings step.  It is easily verified that the  Metropolis-Hastings
acceptance probability for  a move from $(\a, \mu)$ to $(\a', \mu)$ is given by
 \[
 1 \wedge \frac{q(\a'\given \a)p(\mu^J\given \a')\l(\a')}{q(\a\given \a')p(\mu^J\given \a)\l(\a)},
 \]
 where $p(\,\cdot\,\given\a)$ is the density of $\mu^J$ if $\mu \sim \Pi_\a$, i.e.\
\[
p(\mu^J\given\a) \propto \prod_{j=1}^J j^{1/2+\a}e^{-\frac12j^{1+2\a}\mu^2_j},
\]
and $q$ is the transition kernel of the proposal chain. We used a proposal chain that, if it is currently at location $\a$, moves to a new $N(\a, \s^2)$-distributed location provided the latter is positive.  We omit further details,  the implementation is straightforward.

The results for the hierarchical Bayes procedure are given in Figure \ref{fig: 5}.
The figure shows the results for simulated data with $n=10^3,10^5,10^7,10^9$ and $10^{11}$, from top to bottom.
Every time we  see  the posterior mean (in  red) and the true curve (black) on the left
and a histogram for the posterior of  $\a$ on the right. The results are  comparable to what we found for the
empirical Bayes procedure.

\begin{figure}
\centerline{\includegraphics[width=7.5cm]{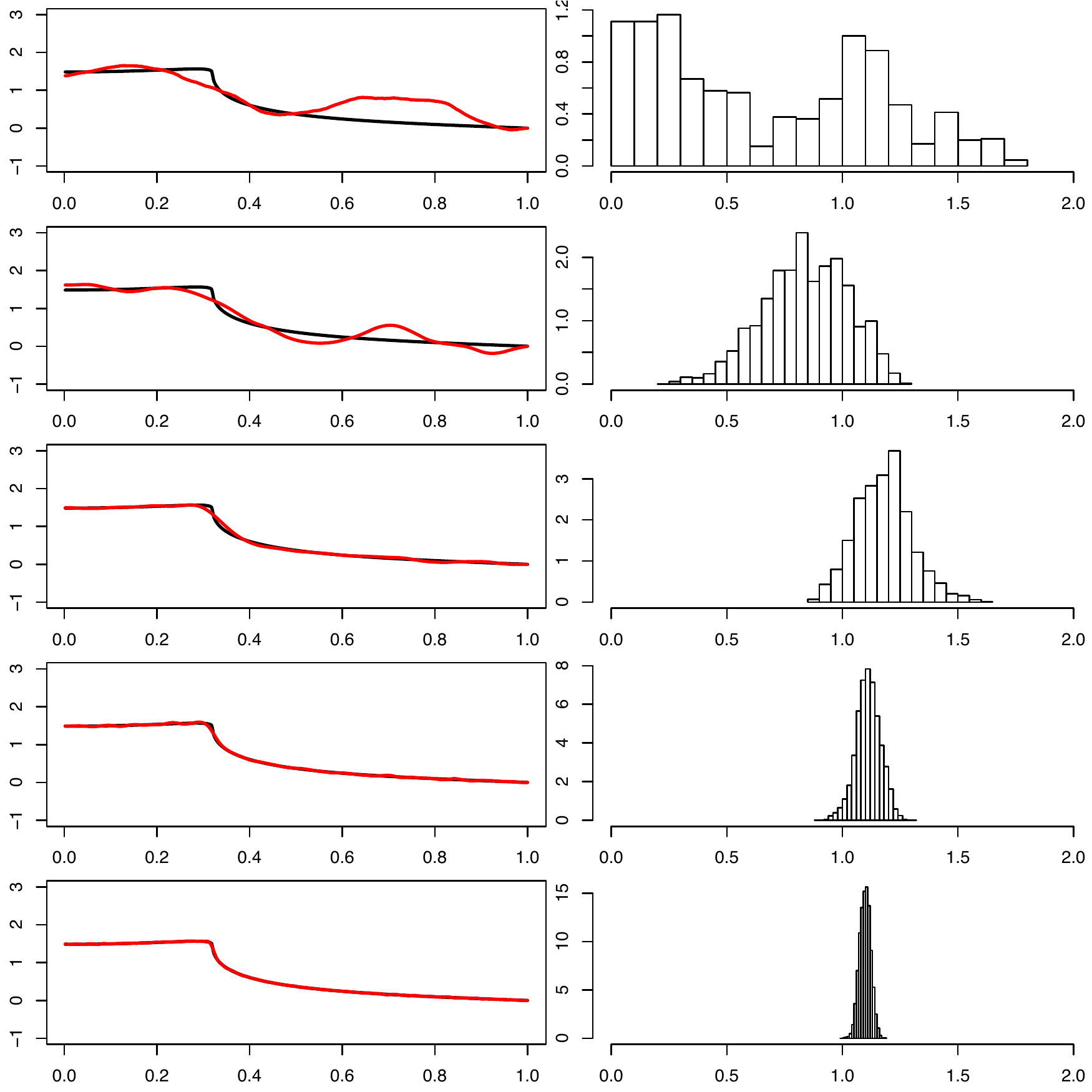}}
\caption{Left panels: the hierarchical Bayes posterior mean (red) and the true curve (black).
Right panels: histograms of posterior for $\a$. We have $n= 10^3, 10^5, 10^7, 10^9$, and $10^{11}$  from top to bottom.}
\label{fig: 5}
\end{figure}

\section{Proof of Lemma \ref{lem: abar}}\label{sec: abar}
In the proofs we assume for brevity that we have the exact equality $\kappa_ i = i^{-p}$.
Dealing with the general case  \eqref{eq: kappa} is straightforward, but makes the proofs
somewhat lengthier.

(i).  We show that for all
$\a \leq \b - c_0/\log n$, for some large enough constant $c_0 > 0$
that only depends on $l$, $\beta, \|\mu_0\|_\beta$ and $p$, it holds that
$h_n(\a) \le l$, where $l$ is the given positive constant in the definition of $\underline{\a}_n$.

The sum in the definition \eqref{eq: h} of $h_n$ can be split into two sums, one
over indices $i \leq n^{1/(1+2\a + 2p)}$ and one over indices $i > n^{1/(1+2\a + 2p)}$.
The second sum   is bounded by
\[
 n^2\sum_{i \ge n^{1/(1+2\a+2p)}} i^{-1-2\a-4p-2\b}(\log i)i^{2\b}\mu_{0,i}^2.
\]
Since the function $x \mapsto x^{-\g}\log x$ is decreasing on $[e^{1/\g}, \infty)$,
this is further bounded by
\[
\frac{\|\mu_0\|_\beta^2}{1+2\a+2p} n^{\frac{1+2\a-2\b}{1+2\a+2p}}{\log n}.
\]
The sum over  $i \leq n^{1/(1+2\a + 2p)}$  is upper bounded by
\[
\sum_{i \leq n^{1/(1+2\a + 2p)}}i^{1+2\a-2\b}i^{2\b}\mu_{0,i}^2\log i.
\]
Since the logarithm is  increasing we can  take $(\log n)/(1+2\a+2p)$ outside the sum and then bound $i^{1+2\a-2\b}$ above by $n^{(1+2\a-2\b)/(1+2\a+2p) \vee 0}$ to arrive at the subsequent bound
\[
\frac{\|\mu_0\|_\beta^2}{1+2\a+2p} n^{0 \vee \frac{1+2\a-2\b}{1+2\a+2p}}{\log n}.
\]
Combining the  bounds for the two sums we obtain the upper bound
\[
h_n(\a) \le \|\mu_0\|_\beta^2 n^{-\frac{1\wedge 2(\b-\a)}{1+2\a+2p}},
\]
valid for all $\a > 0$.
Now suppose that  $\a \leq \b - c_0/\log n$. Then for $n$ large enough,
the power of $n$ on the right-hand side is bounded by
\[
n^{-\frac{1\wedge 2(c_0/\log n)}{1+2\b+2p}} =  e^{-\frac{2c_0}{1+2\b+2p}}.
\]
Hence given $l > 0$ we can  choose $c_0$ so large,  only depending on $l$, $\beta, \|\mu_0\|_\beta$ and $p$,
 that $h_n(\a) \le l$ for $\a \leq \b - c_0/\log n$.

(ii). We  show that  in this case we have $h_n(\a) \le l$ for $\a \leq \sqrt{\log n}/(\log\log n)$
and $n \ge n_0$, where $n_0$ only depends on $\|\mu_0\|_{A^\g}$.
Again we give an upper bound for $h_n$ by splitting the sum in its definition into two smaller sums. The one
over indices  $i > n^{1/(1+2\a + 2p)}$ is bounded by
\[
n^2\sum_{i> n^{1/(1+2\a+2p)}} i^{-1-2\a-4p}e^{-2\g i}(\log i)e^{2\g i}\mu_{0,i}^2.
\]
Using the fact that for $\d > 0$ the function $x \mapsto x^{-\d}e^{-2\g x}\log x$  is
decreasing on  $[e^{1/\d}, \infty)$ we can see that this is further bounded by
\[
\frac{\|\mu_0\|_{A^\g}^2}{1+2\a+2p} e^{-2\g n^{1/(1+2\a+2p)}} n^{\frac{1+2\a}{1+2\a+2p}}{\log n}.
\]
The sum over indices $i \leq n^{1/(1+2\a + 2p)}$ is bounded by
\[
\frac{\log n}{1+2\a+2p}\sum_{i \le n^{1/(1+2\a+2p)}}i^{1+2\a}e^{-2\g i}e^{2\g i}\mu_{0,i}^2.
\]
Since the maximum on $(0,\infty)$ of the function $x \mapsto x^{1+2\a}\exp(-2\g x)$ equals
$\exp((1+2\a)(\log((1+2\a)/2\g) - 1))$, we have the subsequent bound
\[
\frac{\|\mu_0\|_{A^\g}^2}{1+2\a+2p} e^{(1+2\a)\log((1+2\a)/2\g)}{\log n}.
\]
Combining the two bounds we find that
\[
h_n(\a) \le \|\mu_0\|_{A^\g}^2 \Big(n^{\frac{2\a}{1+2\a+2p}}e^{-2\g n^{\frac1{1+2\a+2p}}}
+ n^{-\frac1{1+2\a+2p}}e^{(1+2\a)\log\frac{1+2\a}{2\g}}\Big)
\]
for all $\a > 0$. It is then easily verified that for the given constant $l > 0$,
we have $h_n(\a) \le l$ for $n \ge n_0$ if $\a \le \sqrt{\log n} /\log\log n$, where $n_0$
	only depends on $\|\mu_0\|_{A^\g}$.

(iii).
Let $\g_n = \g + C_0(\log\log n)/(\log n)$.
We will  show that for $n$ large enough, $h_n(\g_n) \geq L(\log n)^2$, provided $C_0$ is large enough.
Note that
\begin{equation*}
\sum_{i=1}^\infty \frac{n^2i^{1+2\g_n}\mu_{0,i}^2\log i}{(i^{1+2\g_n+2p}+n)^2}
\geq \frac{c^2}{4}\sum_{i \le n^{1/(1+2\g_n+2p)}} i^{2(\g_n-\g)}\log i.
\end{equation*}
By monotonicity and the fact that $\lfloor x\rfloor  \ge x/2$ for $x$ large,  the sum on the right is bounded from below by
the integral
\begin{align*}
\int_{0}^{n^{1/(1+2\g_n+2p)}/2} x^{2\g_n-2\g}\log x\, dx.
\end{align*}
This integral can be computed explicitly and is for large $n$ bounded from below by a constant times
\[
\frac{\log n}{1+2\g_n + 2p} n^{\frac{2\g_n-2\g +1}{1+2\g_n+2p}}.
\]
It follows that, for large enough $n$, $h_n(\g_n)$ is bounded from below by a constant times $c^2 n^{{2(\g_n-\g)}/{(1+2\g_n+2p)}}$.
Since $(\log\log n)/(\log n) \leq 1/4$ for  $n$ large enough, we obtain
\[
n^{2(\g_n-\g)/(1+2\g_n+2p)} \geq n^{\frac{1}{\log n}(\log\log n)\frac{2C_0}{1+2\g+C_0/2+2p}} = (\log n)^{2C_0/(1+2\g+C_0/2+2p)}.
\]
Hence for  $C_0$ large enough, only depending on $c$ and $\g$, we indeed have that and $h_n(\g_n) \geq L(\log n)^2$ for large $n$.

(iv). If $\mu_{0,i} \not = 0$ for $i \ge 2$, then
\[
h_n(\a) \gtrsim \frac{1+2\a+2p}{n^{1/(1+2\a+2p)}\log n} \frac{n^2i^{1+2\a}}{(i^{1+2\a+2p} + n)^2}.
\]
Now define $\a_n$ such that   $i^{1+2\a_n+2p} = n$. Then  by construction we have
$h_n(\a_n) \gtrsim n^{1-(1+2p)/(1+2\a_n+2p)}$. Since $\a_n \to \infty$  the right side is larger than $L\log^2 n$ for
 $n$ large enough, irrespective of the value of $L$,  hence $\bar\a_n \le \a_n \le  (\log n)/(2\log 2) -1/2-p$.

\section{Proof of Theorem~\ref{thm: AlphaMagnitude}}\label{sec: AlphaMagnitude}

With the help of the dominated convergence theorem one can see that the random function $\ell_n$ is $(\Pr_0-a.s.)$ differentiable and its derivative, which we denote by  $\MM_n$, is given by
\begin{equation*}\label{eq: m}
\MM_n(\a)  =
\sum_{i=1}^{\infty}\frac{n\log i}{i^{1+2\a}\k_i^{-2}+n}
-\sum_{i=1}^{\infty}\frac{n^2i^{1+2\a}\k_i^{-2}\log i}{(i^{1+2\a}\k_i^{-2}+n)^2}Y_i^2.
\end{equation*}

We will show that on the interval $(0,\underline\a_n+1/\log n]$ the random function $\MM_n$ is positive and bounded
away from $0$ with probability tending to one, hence $\ell_n $ has  no local maximum in this interval.
Next we distinguish two cases according to the value of $\overline\a_n$. If $\overline{\a}_n > \log n$, then
the inequality $\hat\a_n\leq\overline{\a}_n$ trivially  holds. In  the case $\overline\a_n\leq \log n$
we show that for a constant $C_1 > 0$ we a.s.\ have
\begin{equation}\label{eq: mhelp}
\ell_n(\a) - \ell_n(\overline\a_n) = \int_{\overline\a_n}^\a \MM_n(\gamma)\,d\gamma
\le C_1 \frac{n^{1/(1+2\overline{\a}_n+2p)}(\log n)^2}{1+2\overline{\a}_n+2p}
\end{equation}
for all $\a \ge \overline \a_n$.
Then we prove that for any given $C_2 > 0$, the constant $L$ can be set such  that for
$\gamma \in [\overline{\a}_n-1/\log n,\overline{\a}_n]$ we have
\[
\MM_n(\gamma) \le -C_2\frac{n^{1/(1+2\overline{\a}_n+2p)}(\log n)^3}{1+2\overline{\a}_n+2p}
\]
with probability tending to one uniformly. Together with \eqref{eq: mhelp} this means that on the interval $[\overline{\a}_n-1/\log n,\overline{\a}_n]$ the  function $\ell_n$ decreases more than it can possibly increase on the interval $[\overline{\a}_n,\infty)$. Therefore,
it holds with probability tending to one that $\ell_n$ has no global maximum on  $(\overline{\a}_n-1/\log n, \infty)$.

We only present the details of the proof for the case that $\mu_0 \in S^\beta$. The case $\mu_0 \in A^\gamma$ can
be handled along the same lines. Again for simplicity we assume $\kappa_i = i^{-p}$ in the proof.

\subsection{$\MM_n(\a)$ on $[\overline{\a}_n,\infty)$}\label{sec: OverA}

In this section we give a deterministic upper bound for the integral of $\MM_n(\a)$ on the interval $[\overline{\a}_n,\infty)$.

We have the trivial bound
\[
\MM_n(\a) \le \sum_{i=1}^{\infty}\frac{n\log i}{i^{1+2\a+2p}+n}.
\]
An application of  Lemma~\ref{lem: LogSeries}.(i) with $r = 1 +2\a + 2p$ and $c = \beta +2p$ shows that
for $\beta/2 < \a \le \log n$,
\[
\MM_n(\a) \lesssim \frac{1}{1+2\a+2p} n^{1/(1+2\a+2p)} \log n.
\]
For  $\a \ge \log n$ we apply Lemma~\ref{lem: LogSeries}.(ii), and see that $\MM_n(\a) \lesssim n2^{-1-2\a-2p}$.
Using the fact that  $x \mapsto 2^{-x} x^3$ is decreasing for large $x$, it is easily seen that
$n2^{-1-2\a-2p} \lesssim (\log n)^3/(1+2\a + 2p)^3$ for $\a \ge \log n$, hence
\[
\MM_n(\a) \lesssim \frac{(\log n)^3}{(1+2\a + 2p)^3}.
\]
By Lemma \ref{lem: abar} we have $\b/2 < \overline{\a}_n$ for large enough $n$. It follows that the integral
we want to bound is bounded by a constant times
\[
n^{1/(1+2\bar\a_n+2p)} \log n \int_{\bar\a_n}^{\log n}\frac{1}{1+2\a+2p} \,d\a
+ \log^3 n\int_{\log n}^\infty \frac{1}{(1+2\a + 2p)^3}\,d\a.
\]
 This quantity is bounded by a constant times
\[
\frac{n^{1/(1+2\overline{\a}_n+2p)} (\log n)^2}{1+2\overline{\a}_n+2p}.
\]

\subsection{$\MM_n(\a)$ on $\a\in[\overline{\a}_n-1/\log n,\overline{\a}_n]$}\label{sec: IntervalA}

In this section we show that the process $\MM_n(\a)$ is with probability going to one smaller than a negative, arbitrary large constant times $n^{1/(1+2\overline\a_n+2p)}(\log n)^3/(1+2\overline{\a}_n+2p)$ uniformly on the interval $[\overline{\a}_n-1/\log n,\overline{\a}_n]$. More precisely, we show that for every $\b, R, M > 0$, the constant $L > 0$ in the definition of $\bar\a_n$
can be chosen such that
\begin{align}
\label{eq: BoundSupSupM}
\limsup_{n\rightarrow\infty}\sup_{\|\mu_0\|_\b \le R}\sup_{\a\in[\overline{\a}_n-1/\log n,\overline{\a}_n]}\E_0\frac{(1+2\a+2p)\MM_n(\a)}{n^{1/(1+2\a+2p)}(\log n)^3}<-M\\
\label{eq: BoundSupM}
\sup_{\|\mu_0\|_\b \le R} \E_0 \sup_{\a\in[\overline{\a}_n-1/\log n,\overline{\a}_n]}\frac{(1+2\a+2p)|\MM_n(\a)-\E_0 \MM_n(\a)|}{n^{1/(1+2\a+2p)}(\log n)^3}\rightarrow 0.
\end{align}

The expected value of the normalized version of the process $\MM_n$ given on the left-hand side of \eqref{eq: BoundSupSupM} is equal to
\begin{equation}
\label{eq: EMn}
\frac{1+2\a+2p}{n^{1/(1+2\a+2p)}(\log n)^3}\Big(\sum_{i=1}^{\infty}\frac{n^2\log i}{(i^{1+2\a+2p}+n)^2}
-\sum_{i=1}^{\infty}\frac{n^2i^{1+2\a}\mu_{0,i}^2\log i}{(i^{1+2\a+2p}+n)^2}\Big).
\end{equation}
We write this as the sum of two terms and bound the first term by
\begin{align*}
\frac{1+2\a+2p}{n^{1/(1+2\a+2p)}(\log n)^3}\sum_{i=1}^{\infty}\frac{n\log i}{i^{1+2\a+2p}+n}.
\end{align*}
We want to bound this quantity for $\a\in[\overline{\a}_n-1/\log n,\overline{\a}_n]$.
By Lemma \ref{lem: abar},  $\b/4 < \overline{\a}_n-1/\log n$ for large enough $n$,
so this interval is included in $(\b/4, \infty)$.
Taking $c=\beta/2+2p$ in
Lemma~\ref{lem: LogSeries}.(i) then shows that the first term is bounded
by a multiple of $1/(\log n)^2$ and hence
tends to zero, uniformly over $[\overline\a_n-1/\log n,\overline\a_n]$.
We now consider the second term in $\eqref{eq: EMn}$, which is equal to $h_n(\a)/(\log n)^2$. By Lemma~\ref{lem: hbounds} for any $\mu_0 \in \ell_2$ and $n \geq e^4$ we have
\[
\frac{h_n(\a)}{(\log n)^2} \gtrsim \frac{1}{(\log n)^2}h_n(\overline{\a}_n) = L,
\]
where the last equality holds by the definition of $\overline{\a}_n$.
This concludes the proof of \eqref{eq: BoundSupSupM}.

To verify \eqref{eq: BoundSupM} it suffices, by Corollary 2.2.5 in \cite{vdVW} (applied with $\psi(x) = x^2$), to show that
\begin{equation}\label{eq: BoundVar}
\sup_{\|\mu_0\|_\beta \le R} \sup_{\a\in[\overline{\a}_n-1/\log n,\overline{\a}_n]}\var_0 \frac{(1+2\a+2p)\MM_n (\a)}{n^{1/(1+2\a+2p)}(\log n)^3} \to 0,
\end{equation}
and
\[
\sup_{\|\mu_0\|_\beta \le R} \int_{0}^{\diam_n}\sqrt{N(\e, [\overline{\a}_n-1/\log n, \overline{\a}_n], d_n)}\, d\e \to 0,
\]
where $d_n$ is the semimetric defined by
\[
d_n^2(\a_1, \a_2) = \var_0 \Bigl(\frac{(1+2\a_1+2p)\MM_n (\a_1)}{n^{1/(1+2\a_1+2p)}(\log n)^3} - \frac{(1+2\a_2+2p)\MM_n (\a_2)}{n^{1/(1+2\a_2+2p)}(\log n)^3}\Bigr),
\]
$\diam_n$ is the diameter of $[\overline{\a}_n-1/\log n, \overline{\a}_n]$ relative do $d_n$, and $N(\e, B, d)$ is the minimal number of $d$-balls of radius $\e$ needed to cover the set $B$.

By Lemma~\ref{lem: VarLemma1}
\begin{equation}\label{eq: Var}
\var_0 \frac{(1+2\a+2p)\MM_n (\a)}{n^{1/(1+2\a+2p)}(\log n)^3} \lesssim \frac{n^{-1/(1+2\a+2p)}}{(\log n)^4} \bigl(1+h_n(\a)\bigr),
\end{equation}
(with an implicit constant that does not depend on $\mu_0$ and $\a$). By the definition of $\overline{\a}_n$ the function $h_n(\a)$ is bounded above by $L(\log n)^2$ on the interval $[\overline{\a}_n-1/\log n, \overline{\a}_n]$. Together with \eqref{eq: Var} it proves \eqref{eq: BoundVar}.

The last bound also shows that the $d_n$-diameter of the set $[\overline{\a}_n-1/\log n, \overline{\a}_n]$ is bounded above by a
constant times $(\log n)^{-1}$, with a constant that does not depend on $\mu_0$ and $\a$.
By Lemma~\ref{lem: VarDist} and the fact that $h_n(\a) \le L(\log n)^2$ for  $\a \in [\overline{\a}_n-1/\log n, \overline{\a}_n]$, we get the
upper bound, $\a_1, \a_2 \in  [\overline{\a}_n-1/\log n, \overline{\a}_n]$,
\[
d_n(\a_1, \a_2) \lesssim |\a_1 - \a_2|,
\]
with a constant that does not depend on $\mu_0$.
Therefore
$N(\e, [\overline{\a}_n-1/\log n, \overline{\a}_n], d_n) \lesssim 1/(\e\log n)$ and hence
\[
\sup_{\|\mu_0\|_\beta  \le R} \int_{0}^{\diam_n}\sqrt{N(\e, [\overline{\a}_n-1/\log n, \overline{\a}_n], d_n)}\, d\e \lesssim
\frac{1}{\log n} \to 0.
\]

\subsection{$\MM_n(\a)$ on $(0, \underline{\a}_n+1/\log n]$}\label{sec: UnderA}

In this subsection we prove that if the constant $l$ in the definition of $\underline{\a}_n$ is small enough, then
\begin{align}
\label{eq: BoundInfM}
\liminf_{n\to\infty}\inf_{\mu_0\in\ell_2}\inf_{\a\in(0,\underline{\a}_n+1/\log n]}\E_0\frac{(1+2\a+2p)\MM_n(\a)}{n^{1/(1+2\a+2p)}\log n}>0\\
\label{eq: BoundSupMb}
\sup_{\mu_0\in\ell_2}\E_0\sup_{\a\in(0,\underline{\a}_n+1/\log n]}\frac{(1+2\a+2p)|\MM_n(\a)-\E_0 \MM_n(\a)|}{n^{1/(1+2\a+2p)}\log n}\to 0.
\end{align}
This shows that $\MM_n$ is positive throughout $(0, \underline{\a}_n+1/\log n]$ with probability tending to one uniformly over $\ell_2$.

Since $\E_0 Y_i^2 = \k_i^2\mu_{0,i}^2+1/n$, the expected value on the left-hand side of \eqref{eq: BoundInfM} is equal to
\begin{equation}\label{eq: EMnCase2}
\frac{1+2\a+2p}{n^{1/(1+2\a+2p)}\log n}\sum_{i=1}^{\infty}\frac{n^2\log i}{(i^{1+2\a+2p}+n)^2}
-h_n(\a).
\end{equation}
We first find a lower bound for the first term.
Since $\underline\a_n \le \sqrt{\log n}$ by definition, we have $\a \ll \log n$ for all
$\a \in (0, \underline{\a}_n+1/\log n]$. Then
it follows from Lemma~\ref{lem: LogLower} that for $n$ large enough,
the first term in \eqref{eq: EMnCase2} is bounded from below by $1/12$
for all
$\a \in (0, \underline{\a}_n+1/\log n]$.
Next note that by definition of $h_n$ and Lemma~\ref{lem: hbounds}, we have
\[
\sup_{\a\in(0,\underline{\a}_n+1/\log n]}h_n(\a) \le K l,
\]
where $K> 0$ is a constant independent of $\mu_0$.
So by choosing $l > 0$ small enough, we can indeed ensure that \eqref{eq: BoundInfM} is true.

To verify \eqref{eq: BoundSupMb} it suffices again, by Corollary 2.2.5 in \cite{vdVW} applied with $\psi(x) = x^2$, to show that
\begin{equation}\label{eq: BoundVarb}
\sup_{\mu_0\in\ell_2} \sup_{\a\in(0,\overline{\a}_n+1/\log n]}\var_0 \frac{(1+2\a+2p)\MM_n (\a)}{n^{1/(1+2\a+2p)}\log n} \to 0,
\end{equation}
and
\[
\sup_{\mu_0\in\ell_2} \int_{0}^{\diam_n}\sqrt{N(\e, (0, \underline{\a}_n+1/\log n], d_n)}\, d\e \to 0,
\]
where $d_n$ is the semimetric defined by
\[
d_n^2(\a_1, \a_2) = \var_0 \Bigl(\frac{(1+2\a_1+2p)\MM_n (\a_1)}{n^{1/(1+2\a_1+2p)}\log n} - \frac{(1+2\a_2+2p)\MM_n (\a_2)}{n^{1/(1+2\a_2+2p)}\log n}\Bigr),
\]
$\diam_n$ is the diameter of $(0, \underline{\a}_n+1/\log n]$ relative to $d_n$, and $N(\e, B, d)$ is the minimal number of $d$-balls of radius $\e$ needed to cover the set $B$.

By Lemma~\ref{lem: VarLemma1}
\begin{equation}\label{eq: Varb}
\var_0 \frac{(1+2\a+2p)\MM_n (\a)}{n^{1/(1+2\a+2p)}\log n} \lesssim n^{-1/(1+2\a+2p)}\bigl(1+h_n(\a)\bigr),
\end{equation}
with a constant that does not depend on $\mu_0$ and $\a$.
We have seen that on the interval $(0, \underline{\a}_n+1/\log n]$ the function $h_n$ is bounded by a
constant times $l$, hence the variance in \eqref{eq: BoundVarb} is bounded by a multiple of $n^{-1/(1+2\underline{\a}_n+2/\log n+2p)} \leq e^{-(1/3)\sqrt{\log n}}\to 0$, which proves \eqref{eq: BoundVarb}.

The variance bound above also imply that the $d_n$-diameter of the set $(0, \underline{\a}_n+1/\log n]$ is bounded by a multiple of
$e^{-(1/6)\sqrt{\log n}}$.
%
By Lemma~\ref{lem: VarDist}, the definition of $\underline\a_n$ and Lemma \ref{lem: hbounds},
\begin{align*}
d_n(\a_1, \a_2) \lesssim |\a_1-\a_2|(\log n)\sqrt{n^{-1/(1+2\underline{\a}_n+2/\log n+2p)}} \lesssim |\a_1-\a_2|,
\end{align*}
with constants that do not depend on $\mu_0$. Hence for the covering number of $(0, \underline{\a}_n+1/\log n]\subset(0, 2\sqrt{\log n})$ we have
\[
N(\e, (0, \underline{\a}_n+1/\log n], d_n) \lesssim \frac{\sqrt{\log n}}{\e},
\]
and therefore
\begin{align*}
\sup_{\mu_0\in\ell_2} \int_{0}^{\diam_n}\sqrt{N(\e, (0, \underline{\a}_n+1/\log n], d_n)}\, d\e
&\lesssim (\log n)^{1/4}e^{-(1/12)\sqrt{\log n}} \to 0.
\end{align*}

\subsection{Bounds on $h_n(\a)$, variances and distances}

In this section we prove a number of auxiliary lemmas used in the preceding.
The first one is about the behavior of the function $h_n$ in a neighborhood of $\underline\a_n$ and $\bar\a_n$.

\begin{lemma}\label{lem: hbounds}
The function $h_n$ satisfies the following bounds:
\begin{align*}
h_n(\a)\gtrsim h_n(\overline\a_n), &\quad \text{ for } \a \in \Bigl[\overline{\a}_n-\frac{1}{\log n},\overline{\a}_n\Bigr] \text{  and  } n \geq e^4,
\\
h_n(\a)\lesssim h_n(\underline\a_n), &\quad \text{ for } \a \in \Bigl[\underline{\a}_n,\underline{\a}_n+\frac{1}{\log n}\Bigr] \text{  and  } n\geq e^2.
\end{align*}
\end{lemma}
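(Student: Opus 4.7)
The plan is to reduce both inequalities to a single uniform estimate: there exists a constant $C_0$ depending only on $p$ such that
\begin{equation*}
h_n(\a+\e) \le C_0\, h_n(\a) \qquad \text{for all } \a \ge 0,\ \e \in [0, 1/\log n],
\end{equation*}
and all sufficiently large $n$. Given this, the second inequality is immediate upon taking $\a = \underline{\a}_n$ and $\e$ equal to the displacement from $\underline{\a}_n$. The first inequality follows similarly: for $\a \in [\overline{\a}_n - 1/\log n, \overline{\a}_n]$, apply the key estimate at base point $\a$ with increment $\e = \overline{\a}_n - \a$ to obtain $h_n(\overline{\a}_n) \le C_0\, h_n(\a)$, i.e., $h_n(\a) \ge C_0^{-1} h_n(\overline{\a}_n)$.

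To prove the key estimate I would factor $h_n(\a) = A(\a) B(\a)$ with $A(\a) = (1+2\a+2p)/(n^{1/(1+2\a+2p)}\log n)$ and $B(\a) = \sum_i f_i(\a)\mu_{0,i}^2\log i$, where $f_i(\a) = n^2 i^{1+2\a}/(i^{1+2\a+2p}+n)^2$. Writing $r = 1+2\a+2p$, a direct computation gives
\[
\frac{A(\a+\e)}{A(\a)} = \frac{r+2\e}{r}\exp\Bigl(\frac{2\e\log n}{r(r+2\e)}\Bigr),
\]
which is bounded by an absolute constant whenever $\e\log n \le 1$, since $r \ge 1+2p \ge 1$. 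The delicate step is to control the term ratios $f_i(\a+\e)/f_i(\a)$ uniformly in $i$: the direct calculation
\[
\frac{f_i(\a+\e)}{f_i(\a)} = i^{2\e}\Bigl(\frac{i^r+n}{i^{r+2\e}+n}\Bigr)^2
\]
yields the naive bound $i^{2\e}$, which is unbounded as $i\to\infty$.

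I would overcome this by splitting according to whether $i^r \le n$ (the ``small'' regime in which the summand is essentially increasing in $i$) or $i^r > n$ (the ``large'' regime in which it is essentially decreasing). In the small regime, $i^{2\e} \le n^{2\e/r} \le e^{2/r} \le e^2$ and $(i^r+n)^2/(i^{r+2\e}+n)^2 \le 4$, so the term ratio is at most $4e^2$. In the large regime, $i^r+n \le 2i^r$ and $i^{r+2\e}+n \ge i^{r+2\e}$, so the squared factor is at most $4/i^{4\e}$, giving $f_i(\a+\e)/f_i(\a) \le 4/i^{2\e} \le 4$. Summing term by term against the weights $\mu_{0,i}^2 \log i$ yields $B(\a+\e) \le 4e^2\, B(\a)$, and combining with the bound on $A$ delivers the key estimate with a universal $C_0$. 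The mild thresholds $n \ge e^4$ and $n \ge e^2$ in the statement enter only through the requirement $\e\log n \le 1$ and analogous elementary inequalities (the slight asymmetry being a bookkeeping artifact); no substantive difficulty is hidden there.
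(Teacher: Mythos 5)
Your proof is correct and follows essentially the same route as the paper's: both arguments split the sum at the resolution level $i^{1+2\a+2p}\asymp n$, bound the contribution of each regime by comparing exponents, and control the prefactor $(1+2\a+2p)/(n^{1/(1+2\a+2p)}\log n)$ using $\e\log n\le 1$. Your packaging as a single two-sided estimate $h_n(\a+\e)\le C_0\,h_n(\a)$ via uniform term ratios is a marginally cleaner organization (it yields both inequalities at once and avoids the paper's separate regrouping of the indices lying between the two thresholds $n^{1/(1+2\a+2p)}$ and $n^{1/(1+2\overline\a_n+2p)}$), but the underlying mechanism is identical.
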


\begin{proof}
We provide a detailed proof of the first inequality, the second one can be proved using similar arguments.

 Let
\[
S_n(\a) = \sum_{i=1}^{\infty}\frac{n^2i^{1+2\a} \mu_{0,i}^2\log i}{(i^{1+2\a + 2p}+n)^2}
\]
be the sum in the definition of $h_n$.
Splitting the sum into two parts we get, for $\a \in [\bar \a_n - 1/\log n , \bar\a_n]$,
\begin{align*}
4 S_n(\a)
&\geq \sum_{i\le n^{1/(1+2\a+2p)}}i^{1+2\overline\a_n-2/\log n}\mu_{0,i}^2\log i\\
& \quad +{n^2}\sum_{i > n^{1/(1+2\a+2p)}}i^{-1-2\overline\a_n-4p}\mu_{0,i}^2\log i.
\end{align*}
In the first sum $i^{-2/\log n}$ can be bounded below by $\exp(-2)$.  Furthermore, for $i\in[n^{1/(1+2\overline\a_n+2p)},n^{1/(1+2\a+2p)}]$, we have the  inequality
\[
i^{1+2\overline\a_n} \mu_{0,i}^2\log i\geq n^2 i^{-1-2\overline\a_n-4p} \mu_{0,i}^2\log i.
\]
Therefore  $S_n(\a)$ can be bounded from below by a constant times
\begin{align*}
&\sum_{i\le n^{1/(1+2\overline\a_n+2p)}}i^{1+2\overline\a_n}\mu_{0,i}^2\log i
+{n^2}\sum_{i>n^{1/(1+2\overline\a_n+2p)}}i^{-1-2\overline\a_n-4p}\mu_{0,i}^2\log i\\
&\qquad\geq  \sum_{i\le n^{1/(1+2\overline\a_n+2p)}}\frac{n^2i^{1+2\overline\a_n}\mu_{0,i}^2\log i}{(i^{1+2\overline{\a}_n+2p}+n)^2}
+\sum_{i>n^{1/(1+2\overline\a_n+2p)}}\frac{n^2i^{1+2\overline\a_n}\mu_{0,i}^2\log i}{(i^{1+2\overline{\a}_n+2p}+n)^2}.
\end{align*}
Hence, we have $S_n(\a) \gtrsim S_n(\overline{\a}_n)$
for $\a \in [\overline{\a}_n-1/\log n,\overline{\a}_n]$.

Next note that for $n \geq e^4$ we have $2(1+2\overline{\a}_n-2/\log n+2p) \geq 1+2\overline{\a}_n+2p$. Moreover,
$n^{-{1}/({1+2\overline{\a}_n-2/\log n+2p})}\gtrsim n^{-{1}/({1+2\overline{\a}_n+2p})}$.
Therefore
\[
\frac{1+2\a+2p}{n^{1/(1+2\a+2p)}\log n} \gtrsim \frac{1+2\overline{\a}_n+2p}{n^{1/(1+2\overline{\a}_n+2p)}\log n}
\]
for $\a \in [\overline{\a}_n-1/\log n,\overline{\a}_n]$ and for $n \geq e^4$. Combining this with
the inequality for $S_n(\alpha)$ yields the desired result.
\end{proof}

Next we present two results on  variances involving the random function  $\MM_n$.

\begin{lemma}\label{lem: VarLemma1}
For any $\a > 0$,
\[
\var_0 \frac{(1+2\a+2p)\MM_n (\a)}{n^{1/(1+2\a+2p)}} \lesssim n^{-1/(1+2\a+2p)}(\log n)^2 \bigl(1+h_n(\a)\bigr).
\]
\end{lemma}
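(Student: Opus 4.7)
\medskip
\noindent
\textbf{Proof plan.}
Under $\Pr_0$ the observations $Y_i$ are independent $N(\kappa_i \mu_{0,i},1/n)$ variables, so the first, deterministic sum in $\MM_n(\a)$ contributes nothing to the variance and the independence of the $Y_i^2$ gives
\[
\var_0 \MM_n(\a) = \sum_{i=1}^\infty w_i(\a)^2\, \var_0 Y_i^2, \qquad w_i(\a) := \frac{n^2 i^{T_\a}\log i}{(i^{T_\a}+n)^2},
\]
where I write $T_\a := 1+2\a+2p$ and use $\kappa_i = i^{-p}$ as in Section~\ref{sec: abar}. Substituting $\var_0 Y_i^2 = 2/n^2 + 4 i^{-2p}\mu_{0,i}^2/n$ splits the right side into a ``noise'' piece $\mathrm{I}$ (from $2/n^2$) and a ``signal'' piece $\mathrm{II}$ (from $4i^{-2p}\mu_{0,i}^2/n$); part $\mathrm{I}$ is independent of $\mu_0$, while part $\mathrm{II}$ is the one that should be governed by $h_n(\a)$. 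The lemma then reduces to showing $\mathrm{I}\lesssim T_\a^{-2}(\log n)^2 n^{1/T_\a}$ and $\mathrm{II}\lesssim T_\a^{-2}(\log n)^2 n^{1/T_\a} h_n(\a)$.

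For both parts my strategy is to split the infinite sum at the ``oracle'' cut-off $i\asymp n^{1/T_\a}$, which is where $(i^{T_\a}+n)^2$ switches from being comparable to $n^2$ to being comparable to $i^{2T_\a}$. For part $\mathrm{I}$, the low-frequency range uses $\log i \le (\log n)/T_\a$ together with $\sum_{i\le M} i^{2T_\a} \lesssim M^{2T_\a+1}/T_\a$ at $M = n^{1/T_\a}$, and the high-frequency range is handled by integral comparison for $x^{-2T_\a}(\log x)^2$. Both contributions come out of order $T_\a^{-3}(\log n)^2 n^{1/T_\a}$, which is amply sufficient.

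For part $\mathrm{II}$ the key manoeuvre is to factor each summand as
\[
\Bigl(\tfrac{4n\, i^{T_\a}\log i}{(i^{T_\a}+n)^2}\Bigr)\cdot \Bigl(\tfrac{n^2 i^{1+2\a}\mu_{0,i}^2\log i}{(i^{T_\a}+n)^2}\Bigr),
\]
so that the second factor is exactly the summand in the definition of $h_n(\a)$. It then suffices to show that the first factor is uniformly $\lesssim (\log n)/T_\a$: for $i\le n^{1/T_\a}$ this follows from $\log i \le (\log n)/T_\a$ together with the AM--GM bound $n i^{T_\a}/(i^{T_\a}+n)^2 \le 1/4$, while for $i > n^{1/T_\a}$ the substitution $i = j\, n^{1/T_\a}$ with $j\ge 1$ rewrites the first factor as $4\bigl((\log n)/T_\a + \log j\bigr)/j^{T_\a}$, which is $\lesssim (\log n)/T_\a$ uniformly in $j\ge 1$. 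Pulling this bound out of the sum and using the definition of $h_n$ gives the claimed control of $\mathrm{II}$. Adding the bounds on $\mathrm{I}$ and $\mathrm{II}$ and multiplying by $T_\a^2/n^{2/T_\a}$ then yields the lemma. The main technical obstacle is precisely the high-frequency regime of part $\mathrm{II}$, since there $\log i$ is no longer dominated by $(\log n)/T_\a$ and one must exploit the extra damping from $n/i^{T_\a}\le 1$ to compensate; the reparametrization by $j = i/n^{1/T_\a}$ is what makes this compensation transparent.
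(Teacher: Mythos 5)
Your argument is correct and follows essentially the same route as the paper: the same variance decomposition of $\MM_n(\a)$ into a noise part and a signal part via $\var_0 Y_i^2=2/n^2+4\k_i^2\mu_{0,i}^2/n$, and the same factorization of the signal summands so that the uniform bound $ni^{1+2\a+2p}\log i/(i^{1+2\a+2p}+n)^2\lesssim(\log n)/(1+2\a+2p)$ — which the paper isolates as Lemma~\ref{lem: Botond'sTrick} and you re-derive via AM--GM and the substitution $i=jn^{1/(1+2\a+2p)}$ — pulls out exactly $h_n(\a)$. One small inaccuracy: with $T_\a=1+2\a+2p$ and $M=n^{1/T_\a}$, the intermediate claim $\sum_{i\le M}i^{2T_\a}\lesssim M^{2T_\a+1}/T_\a$ does not hold with a uniform constant when $M\lesssim T_\a$ (the top term alone is of size $M^{2T_\a}$), but the corrected bound $M^{2T_\a}(1+M/T_\a)$ still yields $\mathrm{I}\lesssim T_\a^{-2}(\log n)^2 n^{1/T_\a}$, which is all that is needed.
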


\begin{proof}
The random variables $Y_i^2$ are independent and $\var_0 Y_i^2 = 2/n^2+4\k_i^2\mu_{0,i}^2/n$, hence the
variance in the statement of the lemma is equal to
\begin{equation}\label{eq: VarDev}
\begin{split}
&\frac{2n^2(1+2\a+2p)^2}{n^{2/(1+2\a+2p)}}\sum_{i=1}^\infty \frac{i^{2+4\a + 4p}(\log i)^2}{(i^{1+2\a+2p}+n)^4}\\
& \quad + \frac{4n^3(1+2\a+2p)^2}{n^{2/(1+2\a+2p)}}\sum_{i=1}^\infty \frac{i^{2+4\a+2p}(\log i)^2\mu_{0,i}^2}{(i^{1+2\a+2p}+n)^4}.
\end{split}
\end{equation}
 By Lemma~\ref{lem: Botond'sTrick}  the first term is bounded by
\begin{align*}
&\frac{2n(1+2\a+2p)\log n }{n^{2/(1+2\a+2p)}}\sum_{i=1}^\infty  \frac{i^{1+2\a+2p}\log i}{(i^{1+2\a+2p}+n)^2}\\
& \quad \le \frac{2(1+2\a+2p)\log n }{n^{2/(1+2\a+2p)}}\sum_{i=1}^\infty  \frac{n\log i}{i^{1+2\a+2p}+n}.
\end{align*}
Lemma~\ref{lem: LogSeries}.(i) further bounds the right hand side of the above display by a multiple of $n^{-1/(1+2\a+2p)}(\log n)^2$ uniformly for  $\a > c$, where $c > 0$ is an arbitrary constant.
For  $\a \leq c$ we get the same bound by applying Lemma~\ref{lem: LogM} (with $m = 2$, $l=4$, $r=1+2\a+2p$, $r_0 = 1+2c+2p$, and $s=2r$) to the first term in \eqref{eq: VarDev}.
 By  Lemma~\ref{lem: Botond'sTrick}, the  second term in \eqref{eq: VarDev} is bounded by
\begin{align*}
& 4n^{-2/(1+2\a+2p)}(1+2\a+2p)(\log n)\sum_{i=1}^\infty  \frac{n^2i^{1+2\a}\mu_{0,i}^2\log i}{(i^{1+2\a}\k_i^{-2}+n)^2}\\
 & \quad = 4n^{-1/(1+2\a+2p)}(\log n)^2h_n(\a).
\end{align*}
Combining the  upper bounds for the two terms we arrive at the assertion of the lemma.
\end{proof}

\begin{lemma}\label{lem: VarDist}
For any $0 < \a_1 < \a_2 <\infty$ we have that
\begin{align*}
\var_0 & \Big( \frac{(1+2\a_1+2p)\MM_n(\a_1)}{n^{1/(1+2\a_1+2p)}} - \frac{(1+2\a_2+2p)\MM_n(\a_2)}{n^{1/(1+2\a_2+2p)}} \Big)\\
&\lesssim(\a_1-\a_2)^2(\log n)^4 \sup_{\a\in[\a_1,\a_2]}n^{-1/(1+2\a+2p)}\bigl(1+h_n(\a)\bigr),
\end{align*}
with a constant that does not depend on $\a$ and $\mu_0$.
\end{lemma}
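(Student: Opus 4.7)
The plan is to reduce the variance of the difference to a supremum of variances of a derivative via the fundamental theorem of calculus, and then invoke the estimates already carried out in Lemma~\ref{lem: VarLemma1}. Writing $r=1+2\a+2p$ and $a_i(\a)=n^2 i^{r}\log i/(i^{r}+n)^2$ for the coefficient of $-Y_i^2$ in $\MM_n(\a)$, set $b_i(\a)=r\,n^{-1/r}a_i(\a)$. Since the deterministic parts of $\MM_n$ drop out in the variance and the $Y_i^2$ are independent, the left-hand side of the claim equals $\sum_{i\ge 2}\bigl(b_i(\a_1)-b_i(\a_2)\bigr)^2\var_0 Y_i^2$.

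By the identity $b_i(\a_1)-b_i(\a_2)=\int_{\a_2}^{\a_1}b_i'(\a)\,d\a$, Cauchy--Schwarz gives $(b_i(\a_1)-b_i(\a_2))^2\le|\a_1-\a_2|\int_{\a_1}^{\a_2}(b_i'(\a))^2\,d\a$; exchanging the (nonnegative) sum and integral and passing to the supremum over $\a\in[\a_1,\a_2]$ yields
\[
\sum_{i\ge 2}(b_i(\a_1)-b_i(\a_2))^2\var_0 Y_i^2 \;\le\; (\a_1-\a_2)^2\sup_{\a\in[\a_1,\a_2]}\sum_{i\ge 2}(b_i'(\a))^2\var_0 Y_i^2.
\]
A direct logarithmic differentiation of $b_i$ then produces
\[
\frac{b_i'(\a)}{b_i(\a)} \;=\; \frac{2}{r}+\frac{2\log n}{r^2}+2\log i\cdot\frac{n-i^r}{n+i^r},
\]
and because $|n-i^r|\le n+i^r$ this yields the crude but sufficient bound $|b_i'(\a)|\lesssim b_i(\a)(\log n+\log i)$.

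Squaring and separating the two contributions, the target is reduced to controlling, uniformly in $\a\in[\a_1,\a_2]$, the two sums
\[
(\log n)^2\sum_{i\ge 2}b_i(\a)^2\var_0 Y_i^2 \;+\; \sum_{i\ge 2}(\log i)^2 b_i(\a)^2\var_0 Y_i^2.
\]
The first sum is precisely the quantity estimated in Lemma~\ref{lem: VarLemma1} and hence contributes at most a constant multiple of $n^{-1/r}(\log n)^4(1+h_n(\a))$. The second sum has the same shape as the two sums displayed in \eqref{eq: VarDev}, but carries two additional $\log i$ factors in the numerators; rerunning the argument of Lemma~\ref{lem: VarLemma1}---one application of Lemma~\ref{lem: Botond'sTrick} followed by Lemma~\ref{lem: LogSeries} (or Lemma~\ref{lem: LogM} for small $\a$)---trades the extra $\log i$'s for two further powers of $\log n$, yielding the same upper bound $n^{-1/r}(\log n)^4(1+h_n(\a))$ with a constant independent of $\mu_0$ and $\a$. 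Combined with the factor $(\a_1-\a_2)^2$ this is exactly the claimed estimate. The main (mild) technical point is the bookkeeping with the extra $(\log i)^2$, but the auxiliary lemmas already accommodate additional logarithmic factors with no essential change.
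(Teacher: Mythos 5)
Your proof is correct and follows essentially the same route as the paper: bound the increment of the coefficient functions via their logarithmic derivative (giving the factor $\lesssim b_i(\a)(\log i+\log n)$), split according to $\var_0 Y_i^2=2/n^2+4\k_i^2\mu_{0,i}^2/n$, and absorb the extra logarithms with Lemma~\ref{lem: Botond'sTrick} together with Lemma~\ref{lem: LogSeries}/Lemma~\ref{lem: LogM}. Your Cauchy--Schwarz step for pulling the supremum out of the sum, and the direct reuse of Lemma~\ref{lem: VarLemma1} for the $(\log n)^2$ piece, are minor (clean) refinements of the paper's argument rather than a different method.
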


\begin{proof}
The variance we have to bound can be written as
\[
n^4\sum_{i=1}^\infty (f_i(\a_1) - f_i(\a_2))^2(\log i)^2 \var_0 Y_i^2,
\]
where
 $f_i(\a) = (1+2\a+2p)i^{1+2\a+2p}n^{-1/(1+2\a+2p)}(i^{1+2\a+2p}+n)^{-2}$.
 For the derivative of $f_i$ we have $f'_1(\a) = 2f_1(\a)(1/(1+2\a + 2p) + \log n / (1+2\a + 2p)^2)$
 and for $i \ge 2$,
 \begin{align*}
|f_i'(\a)| &= \Bigl|2f_i(\a)\Bigl(\frac{1}{1+2\a+2p} + \log i + \frac{\log n}{(1+2\a+2p)^2}-\frac{2i^{1+2\a+2p}\log i}{i^{1+2\a+2p}+n}\Bigr)\Bigr|\\
&\leq 8f_i(\a)\bigl(\log i +(\log n)/(1+2\a + 2p)^2\bigr).
\end{align*}
It follows that  the variance is bounded by a constant times
\begin{align*}
& (\a_1-\a_2)^2n^4 \sup_{\a\in[\a_1, \a_2]}(1+2\a+2p)^2\Big(\\
& \quad\sum_{i=1}^\infty \frac{i^{2+4\a+4p}(\log i)^2
\bigl(1 \vee \log i +(\log n)/(1+2\a + 2p)^2\bigr)^2}{n^{2/(1+2\a+2p)}(i^{1+2\a+2p}+n)^4}\var_0 Y_i^2\Big).
\end{align*}
Since $\var_0 Y_i^2 = 2/n^2+4\k_i^2\mu_{0,i}^2/n$,  it suffices to show that both
\begin{equation}\label{eq: BoundM}
\begin{split}
& n^2\sup_{\a\in[\a_1, \a_2]}(1+2\a+2p)^2\Big(\\
& \quad \sum_{i=1}^\infty \frac{i^{2+4\a+4p}(\log i)^2\bigl(1 \vee \log i +(\log n)/(1+2\a + 2p)^2\bigr)^2}{n^{2/(1+2\a+2p)}(i^{1+2\a+2p}+n)^4}
\Big)
\end{split}
\end{equation}
and
\begin{equation}\label{eq: BoundHb}
\begin{split}
&n^3\sup_{\a\in[\a_1, \a_2]}(1+2\a+2p)^2\Big(\\
&\quad \sum_{i=1}^\infty \frac{i^{2+4\a+2p}(\log i)^2\mu_{0,i}^2\bigl(1 \vee\log i +(\log n)/(1+2\a + 2p)^2\bigr)^2}{n^{2/(1+2\a+2p)}(i^{1+2\a+2p}+n)^4}
\Big)
\end{split}
\end{equation}
are bounded by a constant times $(\log n)^4 \sup_{\a\in[\a_1,\a_2]}n^{-1/(1+2\a+2p)}(1+h_n(\a))$.

By applying Lemma~\ref{lem: Botond'sTrick} twice (once the first statement with $r=1+2\a+2p$ and $m=1$ and
once the second one with the same $r$ and $m=3$ and $\xi=1$) the expression in \eqref{eq: BoundHb} is seen to be bounded above by
a constant times
\begin{align*}
(\log n)^3 \sup_{\a\in[\a_1,\a_2]}\Bigl(n^{-2/(1+2\a+2p)}(1+2\a+2p)
\sum_{i=1}^\infty \frac{n^2i^{1+2\a}\mu_{0,i}^2\log i}{(i^{1+2\a+2p}+n)^2}\Bigr).
\end{align*}
The expression in the parentheses equals $h_n(\a)n^{-1/(1+2\a+2p)}\log n$.
Now fix $c > 0$.
Again, applying Lemma~\ref{lem: Botond'sTrick} twice
implies that  we get that \eqref{eq: BoundM} is bounded above by
\begin{align*}
(\log n)^3 \sup_{\a\in[\a_1,\a_2]}\Bigl(\frac{2n^{-2/(1+2\a+2p)}}{1+2\a+2p}\sum_{i=1}^\infty \frac{ni^{1+2\a+2p}\log i}{(i^{1+2\a+2p}+n)^2}\Bigr).
\end{align*}
Using the inequality $x/(x+y)\leq 1$ and Lemma~\ref{lem: LogSeries}.(i), the expression in the parenthesis can be bounded by a constant
times $n^{-1/(1+2\a+2p)}\log n$ for $\a > c$.
For  $\a \le c$, Lemma~\ref{lem: LogM} (with $m = 2$ or $m=4$, $l=4$, $r=1+2\a+2p$, $r_0 = 1+2c+2p$, and $s=2r$)  gives the
same bound (or even a better one) for \eqref{eq: BoundM}.
The proof is completed by combining the obtained bounds.
\end{proof}

\section{Proof of Theorem~\ref{thm: ConvergenceEB}}\label{sec: ProofSob}

As before we only present the details of the proof for the Sobolev case $\mu_0 \in S^\beta$. The analytic case can be dealt with
similarly. Again, we assume the exact equality $\kappa_i = i^{-p}$ for simplicity.

 By Markov's inequality and Theorem \ref{thm: AlphaMagnitude},
\begin{equation}\label{eq: ll}
\begin{split}
& \sup_{\|\mu_0\|_\beta \le R} \E_0 \Pi_{\hat{\a}_n}\bigl(\|\mu-\mu_0\| \geq M_n\e_{n}\, \big|\, Y\bigr)\\
& \qquad \le \frac{1}{M_n^2\e_n^2} \sup_{\|\mu_0\|_\beta \le R} \E_0 \sup_{\a\in[\underline\a_n, \overline\a_n\wedge \log n]}
R_n(\a) + o(1),
\end{split}
\end{equation}
where
\[
R_n(\a) = \int \|\mu-\mu_0\|^2\,\Pi_\a(d\mu \given Y)
\]
is the posterior risk. We will show in the subsequent subsections that for $\e_n
=n^{-\b/(1+2\b+2p)}(\log n )^{2}(\log\log n)^{1/2}$ and arbitrary $M_n \to \infty$, the first term on the right
of \eqref{eq: ll} vanishes
as $n \to \infty$.
Note that by the explicit posterior computation \eqref{eq: PostDist}, we have
\begin{equation}
\label{eq: PostExp}
R_n(\a)
=\sum_{i=1}^\infty (\hat\mu_{\a,i}-\mu_{0,i})^2
+\sum_{i=1}^{\infty}\frac{i^{2p}}{i^{1+2\a+2p}+n},
\end{equation}
where $\hat\mu_{\a,i}=ni^{p}(i^{1+2\a+2p}+n)^{-1}Y_i$
is the $i$th coefficient of the posterior mean. We divide the Sobolev-ball $\|\mu_0\|_\beta \le R$ into two subsets
\begin{align*}
P_n  & = \{\mu_0: \|\mu_0 \|_\beta \le R, \ \overline{\a}_n \leq (\log n)/\log 2-1/2-p\},\\
Q_n  & = \{\mu_0: \|\mu_0 \|_\beta \le R, \ \overline{\a}_n > (\log n)/\log 2-1/2-p\},
\end{align*}
and show that on both subsets the posterior risks are of the order $\e_n^2$.
%
%
%

\subsection{Bound for the expected posterior risk over $P_n$}
\label{sec: 61}

In this section we  prove that
\begin{equation}\label{eq: PnExp}
\sup_{\mu_0\in P_n} \sup_{\a\in[\underline\a_n, \overline\a_n]} \E_0 R_n(\a) = O(\e_n^2).
\end{equation}
The second term of \eqref{eq: PostExp} is deterministic. The expectation of the first term can be split into square bias and variance terms. We find that the expectation of \eqref{eq: PostExp} is given by
\begin{equation}\label{eq: PostRisk}
\sum_{i=1}^\infty \frac{i^{2+4\a+4p}\mu_{0,i}^2}{(i^{1+2\a+2p}+n)^2}
+{n}\sum_{i=1}^\infty \frac{i^{2p}}{(i^{1+2\a+2p}+n)^2}
+\sum_{i=1}^\infty \frac{i^{2p}}{i^{1+2\a+2p}+n}.
\end{equation}
Note that the second and third terms in
\eqref{eq: PostRisk} are independent of $\mu_0$, and that the second is bounded by
the third. By Lemma~\ref{lem: LogM} (with $m=0$, $l=1$, $r=1+2\a+2p$ and $s=2p$)
the latter is for  $\a \ge\underline{\a}_n$ further bounded by
\begin{align*}
 n^{-\frac{2\a}{1+2\a+2p}}\leq  n^{-\frac{2\underline{\a}_n}{1+2\underline{\a}_n+2p}}.
\end{align*}
In view of Lemma \ref{lem: abar}.(i),  the right-hand side is bounded
by a constant times $n^{-2\beta/(1+2\beta+ 2p)}$ for large $n$.
%
%
%
%

It remains to consider the first sum in \eqref{eq: PostRisk}, which
we divide into three parts and show that each of the parts has the stated order. First we note that
\begin{equation}\label{eq: BetaTail}
\sum_{i > n^{1/(1+2\b+2p)}} \frac{i^{2+4\a+4p}\mu_{0,i}^2}{(i^{1+2\a+2p}+n)^2}
\le \sum_{i > n^{1/(1+2\b+2p)}}\mu_{0,i}^2 \leq \|\mu_0\|^2_\b n^{-2\b/(1+2\b+2p)}.
\end{equation}
Next, observe that elementary calculus shows that for $\a > 0$ and $n \ge e$, the maximum of the function $i \mapsto
i^{1+2\a + 4p}/\log i$ over the interval $[2, n^{1/(1+2\a + 2p)}]$ is attained
at $i = n^{1/(1+2\a + 2p)}$, for $\a\leq\log n/(2\log 2)-1/2-p$.
It follows that for $\a > 0$,
\begin{align*}
& \sum_{i \le n^{1/(1+2\a+2p)}} \frac{i^{2+4\a+4p}\mu_{0,i}^2}{(i^{1+2\a+2p}+n)^2} \\
& \quad = \frac{\mu_{0,1}^2}{(1+n)^2}
+ \frac1{n^2}\sum_{2 \le i \le n^{1/(1+2\a+2p)}} \frac{((i^{1+2\a+4p})/\log i)n^2i^{1+2\a}\mu_{0,i}^2\log i}{(i^{1+2\a+2p}+n)^2} \\
& \quad \le \frac{\mu_{0,1}^2}{(1+n)^2} + n^{-\frac{2\a}{1+2\a + 2p}}h_n(\a).
\end{align*}
We note that for $\a>\log n/(2\log 2)-1/2-p$ the second term on the right hand side of the preceding display disappears and for $\mu_0 \in P_n$ we have that $\overline\a_n$ is finite. Since $n^{1/(1+2\overline\a_n+2p)} \le n^{1/(1+2\a+2p)}$
for $\a \le \overline\a_n$, the preceding implies that
\begin{align*}
 \sup_{\mu_0\in P_n} \sup_{\a\in[\underline\a_n, \overline\a_n]} \sum_{i \le n^{1/(1+2\bar\a_n+2p)}} \frac{i^{2+4\a+4p}\mu_{0,i}^2}{(i^{1+2\a+2p}+n)^2} \quad \lle \frac{R^2}{n^2} + L n^{-\frac{2\underline\a_n}{1+2\underline\a_n + 2p}} \log^2n.
\end{align*}
By Lemma \ref{lem: abar}, $\underline\a_n \ge \beta - c_0/\log n$ for a constant $c_0 > 0$
(only depending on $\beta, R, p$). Hence,
using  that $x \mapsto x/(c+x)$ is increasing for every $c > 0$ the right-hand side is bounded
by a constant times $n^{-2\beta/(1+2\beta+ 2p)}\log^2 n$.

To complete the proof
 we deal with the terms between $n^{1/(1+2\overline{\a}_n+2p)}$ and $n^{1/(1+2\b+2p)}$.
Let $J=J(n)$ be the smallest integer such that $(\overline{\a}_n
/(1+1/\log n)^J \leq \b$. One can see that $J$ is bounded above by a multiple of $(\log n)(\log\log n)$ for any positive $\b$.
We partition the summation range under consideration into $J$ pieces using the auxiliary numbers
\[
b_j = 1+ 2\frac{\overline{\a}_n
}{(1+1/\log n)^j}+2p, \qquad j =0, \ldots, J.
\]
Note that the sequence $b_j$ is decreasing. Now we have
\[
\sum_{i= n^{1/(1+2\overline{\a}_n+2p)}}^{n^{1/(1+2\b+2p)}}
\frac{i^{2+4\a+4p}\mu_{0,i}^2}{(i^{1+2\a + 2p}+n)^2}\leq \sum_{j=0}^{J-1}\sum_{i=n^{1/b_j}}^{n^{1/b_{j+1}}}\mu_{0,i}^2 \leq 4\sum_{j=0}^{J-1}\sum_{i=n^{1/b_j}}^{n^{1/b_{j+1}}}\frac{ni^{b_j}\mu_{0,i}^2}{(i^{b_{j+1}}+n)^2},
\]
and  the  upper bound is uniform in $\a$.
Since $(b_j-b_{j+1})\log n = b_{j+1}-1-2p$, it holds for $n^{1/b_j}\leq i \leq n^{1/b_{j+1}}$ that
 $i^{b_j-b_{j+1}} \leq n^{1/\log n} = e$. On the same interval $i^{2p}$ is bounded by $n^{2p/b_{j+1}}$. Therefore the right hand side of the preceding display is further bounded by a constant times
\begin{align*}
&\sum_{j=0}^{J-1}\sum_{i=n^{1/b_j}}^{n^{1/b_{j+1}}}\frac{ni^{b_{j+1}}\mu_{0,i}^2\log i}{(i^{b_{j+1}}+n)^2}\leq \sum_{j=0}^{J-1}n^{2p/b_{j+1}-1}\sum_{i=n^{1/b_j}}^{n^{1/b_{j+1}}}\frac{n^2i^{b_{j+1}-2p}\mu_{0,i}^2\log i}{(i^{b_{j+1}}+n)^2}\\
&\leq \sum_{j=0}^{J-1}n^{2p/b_{j+1}-1}h_n\biggl(\frac{\overline{\a}_n
}{(1+1/\log n)^{j+1}}\biggr)n^{1/b_{j+1}}\frac{\log n}{b_{j+1}}\\
&\leq (\log n)\sum_{j=0}^{J-1}n^{(1+2p-b_{j+1})/b_{j+1}}h_n(b_{j+1}/2-1/2-p)\\
&\leq  (\log n) n^{-\frac{2\b/(1+1/\log n)}{1+2\b/(1+1/\log n)+2p}}\sum_{j=0}^{J-1}h_n(b_{j+1}/2-1/2-p).
\end{align*}
In the last step we used the fact that by construction, $b_{j}/2-1/2-p \geq \b/(1+1/\log n)$ for
$j\le J$. Because $b_{j}/2-1/2-p \le\overline\a_n$ for every $j\ge0$, it follows from
the definition of $\overline{\a}_n$ that $h_n(b_{j}/2-1/2-p)$
is bounded above by $L(\log n)^2$, and we recall that $J=J(n)$ is bounded
above by a multiple of $(\log n)(\log\log n)$. Finally we note that
\begin{align*}
n^{-\frac{2\b/(1+1/\log n)}{1+2\b/(1+1/\log n)+2p}}  \le en^{-2\b/(1+2\b+2p)}.
\end{align*}
Therefore the first sum in \eqref{eq: PostRisk} over the range $[n^{1/(1+2\overline{\a}_n+2p)},
n^{1/(1+2\b+2p)}]$ is bounded above by a multiple of $n^{-2\b/(1+2\b+2p)}(\log n)^4(\log\log n)$, in
the appropriate uniform sense over $P_n$. Putting the bounds above together we conclude $\eqref{eq: PnExp}$

\subsection{Bound for the centered posterior risk over $P_n$}\label{sec: brisk}

We show in this section that for the set $P_n$ we also have
\[
\sup_{\mu_0\in P_n}\E_0 \sup_{\a\in[\underline{\a}_n,\overline{\a}_n]}\Bigl|\sum_{i=1}^\infty \bigl(\hat\mu_{\a,i}-\mu_{0,i}\bigr)^2 - \E_0 \sum_{i=1}^\infty \bigl(\hat\mu_{\a,i}-\mu_{0,i}\bigr)^2\Bigr| = O(\e_n^2),
\]
for $\e_n = n^{-\b/(1+2\b+2p)}(\log n )^{2}(\log\log n)^{1/2}$.
Using the explicit expression for the posterior mean $\hat\mu_{\a,i}$ we see that the random variable in the supremum is the absolute value of $\VV(\a)/n-2\WW(\a)/\sqrt{n}$, where
\[
\VV(\a)=\sum_{i=1}^\infty \frac{n^2\k_i^{-2}}{(i^{1+2\a}\k_i^{-2}+n)^2}(Z_i^2-1), \qquad \WW(\a)= \sum_{i=1}^\infty \frac{ni^{1+2\a}\k_i^{-3}\mu_{0,i}}{(i^{1+2\a}\k_i^{-2}+n)^2}Z_i.
\]
We deal with the two processes separately.

For the process $\VV$, Corollary 2.2.5 in \cite{vdVW} implies that
\[
\E_0 \sup_{\a\in[\underline{\a}_n,\infty)}|\VV(\a)| \lle
\sup_{\a\in[\underline{\a}_n,\infty)}\sqrt{\var_0\VV(\a)} +
\int_0^{\diam_n}\sqrt{N(\e, [\underline{\a}_n,\infty), d_n)}\,d\a,
\]
where $d^2_n(\a_1, \a_2) = \var_0(\VV(\a_1)-\VV(\a_2))$ and $\diam_n$ is the $d_n$-diameter of
$[\underline{\a}_n,\infty)$.
Now the variance of $\VV(\a)$ is equal to
\[
\var_0\VV(\a) =  2n^4\sum_{i=1}^\infty \frac{i^{4p}}{(i^{1+2\a+2p}+n)^4},
\]
since $\var_0 Z_i^2 = 2$. Using Lemma~\ref{lem: LogM} (with $m=0$, $l=4$, $r=1+2\a+2p$ and $s=4p$), we can conclude that the variance of $\VV(\a)$ is bounded above by a multiple of $n^{(1+4p)/(1+2\a+2p)}$.
It follows that the diameter of the interval $\diam_n \lle n^{(1+4p)/(1+2\underline{\a}_n+2p)}$.
To  compute the covering number of the interval $[\underline{\a}_n, \infty)$
we first note that for $0 < \a_1 < \a_2$,
\begin{align*}
\var_0\bigl(\VV(\a_1)& -\VV(\a_2)\bigr) = \sum_{i=2}^\infty \biggl(\frac{n^2i^{2p}}{(i^{1+2\a_1+2p}+n)^2}-
\frac{n^2i^{2p}}{(i^{1+2\a_2+2p}+n)^2}\biggr)^2\var Z_i^2\\
&\leq 2\sum_{i=2}^\infty \frac{n^4i^{4p}}{(i^{1+2\a_1+2p}+n)^4}
\leq 2n^4\sum_{i=2}^\infty i^{-4-8\a_1-4p} \lle n^42^{-8\a_1}.
\end{align*}
Hence for $\e > 0$,
a single $\e$-ball covers the whole interval $[K\log(n/\e), \infty)$ for some constant $K > 0$.
By Lemma~\ref{lem: VarVW}, the distance $d_n(\a_1, \a_2)$ is bounded above by a multiple of $|\a_1-\a_2|n^{(1+4p)/(2+4\underline{\a}_n+4p)}(\log n)$. Therefore the covering number of the interval $[\underline{\a}_n, K\log(n/\e)]$ relative to the metric $d_n$ is bounded above by a multiple of $(\log n)n^{(1+4p)/(2+4\underline{\a}_n+4p)}(\log(n/\e))/\e$.
Combining everything we see that
\begin{align*}
\E_0\sup_{\a\in[\underline{\a}_n,\infty)}|\VV(\a)|
\lesssim n^{\frac{1+4p}{2+4\underline{\a}_n+4p}}(\log n).
\end{align*}
By the fact that $x \mapsto x/(x+c)$ is increasing and Lemma \ref{lem: abar}.(i), the right-hand side divided  by $n$ is bounded by
\[
n^{-\frac{2\underline{\a}_n}{1+2\underline{\a}_n+2p}}(\log n) \lesssim n^{-2\b/(1+2\b+2p)}(\log n).
\]

It remains to deal with the process $\WW$.
The basic line of reasoning is the same as followed above for $\VV$. An essential difference however is the
derivation of a bound for the
 variance of $\WW$, of which we provide the details. The rest of the proof is left to the reader.
The variance $\WW(\a)/\sqrt{n}$
  is given by
\[
\var_0\biggl(\frac{\WW(\a)}{\sqrt{n}}\biggr) = \sum_{i=1}^\infty \frac{ni^{2+4\a+ 6p}\mu_{0,i}^2}{(i^{1+2\a+2p}+n)^4}.
\]
We show that uniformly for $\a \in [\underline\a_n, \overline\a_n]$, this variance is bounded above by a constant (which depends only on $\|\mu_0\|_\beta$) times $n^{-(1+4\b)/(1+2\b+2p)}(\log n)^2$. We note that on the set $P_n$ the upper bound $\overline\a_n\leq \log n/\log2-1/2-p$ is finite.

For the sum over  $i \leq n^{1/(1+2\a+2p)}$ we have
\begin{equation}\label{eq: piet}
\begin{split}
&\sum_{i \le n^{1/(1+2\a+2p)}} \frac{ni^{2+4\a+6p}\mu_{0,i}^2}{(i^{1+2\a+2p}+n)^4}\\
& \qquad \leq
\frac{\mu_{0,1}^2}{n^3} +
\frac{1}{n^3}\sum_{2 \le i \le n^{1/(1+2\a+2p)}} \frac{n^2i^{1+2\a+6p}(\log i)^{-1}i^{1+2\a}\mu_{0,i}^2\log i}{(i^{1+2\a+2p}+n)^2}\\
&\qquad\leq
\frac{\|\mu_0\|^2_\b}{n^3} +
(1+2\a+2p)\frac{n^{4p/(1+2\a+2p)}}{(\log n)n^2}\sum_{i \le n^{1/(1+2\a+2p)}} \frac{n^2i^{1+2\a}\mu_{0,i}^2\log i}{(i^{1+2\a+2p}+n)^2}\\
&\qquad\leq
\frac{\|\mu_0\|^2_\b}{n^3} +
n^{-\frac{1+4\a}{1+2\a+2p}}h_n(\a).
\end{split}
\end{equation}
We note that the second term on the right hand side of the preceding display disappears for $\a>\log n/(2\log2)-1/2-p$. We have used again the fact that on the range $i \leq n^{1/(1+2\a+2p)}$, the quantity $i^{1+2\a+6p}(\log i)^{-1}$
is maximal for the largest $i$.
Now the function $x \mapsto -(1+2x)/(x+c)$ is  decreasing on $(0, \infty)$
for any $c > 1/2$. Moreover $h_n(\a) \leq L(\log n)^2$ for any $\a \leq \overline{\a}_n$, thus the preceding display is bounded above by a multiple of $n^{-(1+4\underline{\a}_n)/(1+2\underline{\a}_n+2p)}(\log n)^2$. Using Lemma \ref{lem: abar}.(i) this
is further bounded by a constant times $n^{-(1+4\b)/(1+2\b+2p)}(\log n)^2$.

Next we consider sum over the range $i > n^{1/(1+2\a+2p)}$. We distinguish two cases according to the value of $\a$. First suppose that $1+2\a \geq 2p$. Then $i^{-1-2\a+2p}(\log i)^{-1}$ is decreasing in $i$, hence
\begin{align*}
& \sum_{i>n^{1/(1+2\a+2p)}}\frac{ni^{2+4\a+ 6p}\mu_{0,i}^2}{(i^{1+2\a+2p}+n)^4} \\
&\quad \leq \frac{1}{n}\sum_{i>n^{1/(1+2\a+2p)}} \frac{n^2i^{-1-2\a+2p}(\log i)^{-1}i^{1+2\a}\mu_{0,i}^2\log i}{(i^{1+2\a+2p}+n)^2}\\
&\quad \leq \frac{1+2\a+2p}{n^{(2+4\a)/(1+2\a+2p)}\log n} \sum_{i>n^{1/(1+2\a+2p)}}
\frac{n^2i^{1+2\a}\mu_{0,i}^2\log i}{(i^{1+2\a+2p}+n)^2}\\
&\quad \leq  n^{-\frac{1+4\a}{1+2\a+2p}}h_n(\a).
\end{align*}
As above, this  is further bounded by a constant times the desired rate $n^{-(1+4\b)/(1+2\b+2p)}(\log n)^2$.
If $1+2\a < 2p$, then
\begin{align*}
\sum_{i>n^{1/(1+2\a+2p)}} \frac{ni^{2+4\a+6p}\mu_{0,i}^2}{(i^{1+2\a+2p}+n)^4}
&\leq n\sum_{i>n^{1/(1+2\a+2p)}} i^{-2-4\a-2p-2\b}i^{2\b}\mu_{0,i}^2\\
&\leq \|\mu_0\|_\b^2n^{\frac{2p-2\b}{1+2\a+2p}-1}.
\end{align*}
Since $\underline{\a}_n \geq \b - c_0/\log n$, we have $1+2\a>2\b$ for large enough $n$, for any $\a \in [\underline{\a}_n,\overline{\a}_n]$. Since we have assumed $1+2\a < 2p$, this implies that $2p > 2\beta$. Therefore the right hand side of the preceding display attains its maximum at $\a=\underline{\a}_n$.
Using again that $\underline{\a}_n \geq \b - c_0/\log n$, it is straightforward to show that for
$\a \in [\underline{\a}_n,\overline{\a}_n]$,
\[
n^{\frac{2p-2\b}{1+2\a+2p}-1} \le n^{\frac{2p-2\b}{1+2\underline\a_n+2p}-1}
\le  e^{4c_0}n^{-\frac{1+4\b}{1+2\b+2p}}.
\]

\subsection{Bound for the expected and centered posterior risk over $Q_n$}\label{sec: 63}
To complete the proof of  Theorem~\ref{thm: ConvergenceEB} we show that similar results to Sections \ref{sec: 61} and \ref{sec: brisk} hold over the set $Q_n$ as well:
\begin{equation}\label{eq: Qn1}
\sup_{\mu_0\in Q_n} \sup_{\a\in[\underline\a_n, \infty)} \E_0 R_n(\a) = O(\e_n^2),
\end{equation}
\begin{equation}\label{eq: Qn2}
\sup_{\mu_0\in Q_n}\E_0 \sup_{\a\in[\underline{\a}_n,\infty)}\Bigl|\sum_{i=1}^\infty \bigl(\hat\mu_{\a,i}-\mu_{0,i}\bigr)^2 - \E_0 \sum_{i=1}^\infty \bigl(\hat\mu_{\a,i}-\mu_{0,i}\bigr)^2\Bigr| = O(\e_n^2).
\end{equation}
For the first statement $\eqref{eq: Qn1}$ we follow the same line of reasoning as in Section
\ref{sec: 61}. The second and third terms in \eqref{eq: PostRisk} are free of $\mu_0$,
and hence the same upper bound as in Section \ref{sec: 61} apply.
The first term in \eqref{eq:  PostRisk} is also treated exactly as in Section \ref{sec: 61},
except that $n^{1/(1+2\overline{\a}_n+2p)}\le 2$ if $\mu_0\in Q_n$ and hence
the sum over the terms $i< n^{1/(1+2\overline\a_n+2p}$ need not be treated,
and we can proceed by replacing $\overline\a_n$ by $\log n/(2\log 2)-1/2-p$ in the definition
of $J$ and the sequence $b_j$.

To bound the centered posterior risk $\eqref{eq: Qn2}$ we follow the proof given in Section
\ref{sec: brisk}. There the process $\VV(\a)$ is already bounded uniformly over $[\underline\a_n,\infty)$,
whence it remains to deal with the process $\WW(\a)$. The only essential difference
is the upper bound for the variance of the process $\WW(\a)/\sqrt n$. In Section \ref{sec:
  brisk} this was shown to be  bounded above by a
multiple of the desired rate $(\log n)^2n^{-(1+4\beta)/(1+2\beta+2p)}$
for $\alpha\in[\underline{\a}_n,\overline{\a}_n\wedge (\log n/\log2-1/2-p)]$,
which is $\alpha\in[\underline{\a}_n,\log n/\log2-1/2-p]$ on the set $Q_n$.
Finally, for $\alpha\geq \log n/\log 2-1/2-p$ we have
\begin{equation}\label{eq: Var2}
\begin{split}
\sum_{i=1}^{\infty}\frac{ni^{2+4\a+6p}\mu_{0,i}^2}{(i^{1+2\a+2p}+n)^4}&\leq \frac{\mu_{0,1}^2}{n^3}+\sum_{i=2}^{\infty}\frac{ni^{-1-2\a}\mu_{0,i}^2}{i^{1+2\a+2p}+n}\\
&\leq \frac{\|\mu_{0}\|_{\beta}^2}{n^3}+\sum_{i=2}^{\infty}i^{-1-2\a-2\beta}i^{2\beta}\mu_{0,i}^2\\
&\leq \frac{\|\mu_{0}\|_{\beta}^2}{n^3}+2^{-1-2\a}\|\mu_0\|_{\beta}^2\leq \frac{\|\mu_{0}\|_{\beta}^2}{n^3}+2^{2p}\frac{\|\mu_{0}\|_{\beta}^2}{n^2}\\
&\lesssim n^{-(1+4\beta)/(1+2\beta+2p)}(\log n)^2.
\end{split}
\end{equation}

This completes the proof.

\subsection{Bounds for the semimetrics associated to  $\VV$ and $\WW$}

The following lemma is used in Section \ref{sec: brisk}.

\begin{lemma}\label{lem: VarVW}
For any $\underline{\a}_n \leq \a_1 < \a_2$ the following inequalities hold:
\[
\var_0\bigl(\VV(\a_1)-\VV(\a_2)\bigr) \lle (\a_1-\a_2)^2n^{(1+4p)/(1+2\underline{\a}_n+2p)}(\log n)^2,
\]
\[
\var_0\biggl(\frac{\WW(\a_1)}{\sqrt{n}}-\frac{\WW(\a_2)}{\sqrt{n}}\biggr)\lesssim (\a_1-\a_2)^2 n^{-\frac{1+4\b}{1+2\b+2p}}
(\log n)^4,
\]
with a constant that does not depend on $\a$ and $\mu_0$.
\end{lemma}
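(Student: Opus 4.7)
The plan is to reduce both variance bounds to $\ell^2$-estimates on the coefficient-wise $\a$-derivatives, via the mean value theorem and independence of the $Z_i$. Differentiation in $\a$ produces one extra factor of $\log i$ per term, and the squared sum thus carries a $(\log i)^2$ factor which, via Lemma~\ref{lem: Botond'sTrick}, can be absorbed at the cost of a single additional factor $(\log n)^2$ relative to the corresponding bounds on the variances of $\VV(\a)$ and $\WW(\a)/\sqrt n$ themselves. This is exactly the same mechanism as in the proof of Lemma~\ref{lem: VarDist}.

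For the first bound, set $g_i(\a) = n^2 i^{2p}/(i^{1+2\a+2p}+n)^2$. Since $\var_0 Z_i^2 = 2$ and the $Z_i$ are independent,
\[
\var_0\bigl(\VV(\a_1) - \VV(\a_2)\bigr) = 2\sum_i \bigl(g_i(\a_1)-g_i(\a_2)\bigr)^2 \le 2(\a_1-\a_2)^2 \sup_{\xi\in[\a_1,\a_2]} \sum_i g_i'(\xi)^2.
\]
A direct computation yields $g_i'(\a) = -4 n^2 i^{1+2\a+4p}(\log i)/(i^{1+2\a+2p}+n)^3$, and dropping the factor $\bigl(i^{1+2\xi+2p}/(i^{1+2\xi+2p}+n)\bigr)^2 \le 1$ reduces the problem to bounding
\[
\sum_i \frac{n^4 i^{4p}(\log i)^2}{(i^{1+2\xi+2p}+n)^4}.
\]
A straightforward split at $i = n^{1/(1+2\xi+2p)}$ (or a combination of Lemma~\ref{lem: Botond'sTrick} with Lemma~\ref{lem: LogSeries}) shows this sum is of order $n^{(1+4p)/(1+2\xi+2p)}(\log n)^2$. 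Since the exponent $(1+4p)/(1+2\xi+2p)$ is decreasing in $\xi$, the supremum over $\xi \ge \underline{\a}_n$ is attained at $\underline{\a}_n$, yielding the first inequality.

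For the second bound, set $c_i(\a) = \sqrt n\, i^{1+2\a+3p}\mu_{0,i}/(i^{1+2\a+2p}+n)^2$, so that $\WW(\a)/\sqrt n = \sum_i c_i(\a) Z_i$. Logarithmic differentiation gives
\[
c_i'(\a) = 2 c_i(\a)\,(\log i)\,\frac{n-i^{1+2\a+2p}}{n+i^{1+2\a+2p}},
\]
whence $|c_i'(\a)| \le 2 c_i(\a)\log i$. By independence and the mean value theorem,
\[
\var_0\Bigl(\frac{\WW(\a_1)-\WW(\a_2)}{\sqrt n}\Bigr) \le 4(\a_1-\a_2)^2 \sup_{\xi\in[\a_1,\a_2]} \sum_i \frac{n\,i^{2+4\xi+6p}\mu_{0,i}^2(\log i)^2}{(i^{1+2\xi+2p}+n)^4}.
\]
The sum without the $(\log i)^2$ was shown, uniformly over $\xi\ge\underline{\a}_n$, to be $\lesssim n^{-(1+4\b)/(1+2\b+2p)}(\log n)^2$ in Section~\ref{sec: brisk} (see \eqref{eq: piet} and \eqref{eq: Var2}). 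The extra $(\log i)^2$ is handled by the same cutoff: on $i \le n^{1/(1+2\xi+2p)}$ one has $\log i \le (\log n)/(1+2\xi+2p)$, and on $i > n^{1/(1+2\xi+2p)}$ one applies the Sobolev tail estimate from \eqref{eq: Var2} combined with the fact that $i \mapsto i^{-\d}(\log i)^2$ is eventually decreasing for any $\d > 0$. This produces exactly one additional factor $(\log n)^2$, giving the claimed rate $n^{-(1+4\b)/(1+2\b+2p)}(\log n)^4$.

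The main obstacle is tracking the logarithmic factors in the $\WW$-bound uniformly over the full range $\xi \ge \underline{\a}_n$. On $[\underline{\a}_n,\overline{\a}_n]$ the low-frequency contribution is controlled through $h_n(\xi)\le L(\log n)^2$ together with the lower bound $\underline{\a}_n\ge \beta - c_0/\log n$ from Lemma~\ref{lem: abar}(i); on the complementary range $\xi > (\log n)/(2\log 2) - 1/2 - p$ the low-frequency range is empty and one uses \eqref{eq: Var2} directly. Matching the resulting bound in each regime to the target rate $n^{-(1+4\b)/(1+2\b+2p)}(\log n)^4$ reproduces the argument of Section~\ref{sec: brisk} verbatim, with the sole modification of an extra $(\log n)^2$-factor coming from the $(\log i)^2$ introduced by differentiation.
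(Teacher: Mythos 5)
Your proposal is correct and follows essentially the same route as the paper: differentiate the coefficient functions in $\a$ (picking up one factor $\log i$ per term), bound the resulting sums by the same splitting at $i=n^{1/(1+2\a+2p)}$ for the $\VV$-part, and reduce the $\WW$-part to the sums already controlled in \eqref{eq: piet} and \eqref{eq: Var2} with an extra $(\log i)^2$ costing $(\log n)^2$. The only cosmetic differences are that you discard the bounded factor $\bigl(i^{1+2\xi+2p}/(i^{1+2\xi+2p}+n)\bigr)^2\le 1$ directly instead of invoking Lemmas~\ref{lem: Botond'sTrick} and \ref{lem: LogM}, and you organize the $c_i'$ computation via logarithmic differentiation; both are equivalent to the paper's argument.
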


\begin{proof}
The left-hand side of the first inequality is equal to
\[
n^4\sum_{i=1}^\infty (f_i(\a_1)-f_i(\a_2))^2i^{4p}\var Z_i^2,
\]
where  $f_i(\a) = (i^{1+2\a+2p}+n)^{-2}$.
The derivative of $f_i$ is given by  $f_i'(\a) = -4i^{1+2\a+2p}(\log i)/(i^{1+2\a+2p}+n)^{3}$, hence the preceding display is bounded above by a multiple of
\begin{align*}
& (\a_1-\a_2)^2n^4\sup_{\a\in[\a_1, \a_2]} \sum_{i=1}^\infty \frac{i^{2+4\a+8p}(\log i)^2}{(i^{1+2\a+2p}+n)^6}\\
&\leq (\a_1-\a_2)^2n^3(\log n)^2\sup_{\a\in[\a_1, \a_2]} \frac{1}{(1+2\a+2p)^2}\sum_{i=1}^\infty \frac{i^{1+2\a+6p}}{(i^{1+2\a+2p}+n)^4}\\
&\lle (\a_1-\a_2)^2(\log n)^2 \sup_{\a\in[\a_1, \a_2]}
n^{(1+4p)/(1+2\a+2p)},
\end{align*}
with the help of Lemma~\ref{lem: Botond'sTrick} (with $r=1+2\a+2p$, and $m=2$), and Lemma~\ref{lem: LogM} (with $m=0$, $l = 4$, $r=1+2\a+2p$, and $s=r+4p$). Since $\a \geq \underline{\a}_n$, we get the first assertion of the lemma.

We next consider $\WW/\sqrt{n}$. The left-hand side of the second inequality in the statement of the lemma is equal to
\[
\sum_{i=1}^\infty (f_i(\a_1) - f_i(\a_2))^2n\mu_{0,i}^2\var Z_i,
\]
where now $f_i(\a)=i^{1+2\a+3p}/(i^{1+2\a+2p}+n)^2$.
The derivative of this $f_i$ satisfies
$|f_i'(\a)| \le 2(\log i)f_i(\a)$, hence we get the upper bound
\[
4(\a_2-\a_1)^2 \sup_{\a \in [\a_1, \a_2]} \sum_{i=1}^\infty
\frac{n i^{2+4\a+6p}\mu^2_{0,i}\log^2 i}{(i^{1+2\a+2p}+n)^4}.
\]
The proof is completed by arguing as in (\ref{eq: piet}) or (\ref{eq: Var2}).
\end{proof}

\section{Proof of Theorem~\ref{thm: ConvergenceHB}}\label{sec: ProofHB}
Again we only provide details for the Sobolev case.
Let $A_n$ be the event that $\hat\a_n \in [\underline{\a}_n,\overline{\a}_n]$.
Then with $\a \mapsto \lambda_n(\a \given Y)$ denoting the posterior Lebesgue density of $\a$, we have
{\begin{equation}\label{eq: FullPost}
\begin{split}
& \sup_{\|\mu_0\|_\b \le R} \E_0 \Pi(\|\mu-\mu_0\|\geq M_nL_nn^{-\b/(1+2\b + 2p)}|Y)\\
&\quad\leq \sup_{\|\mu_0\|_\b \le R} \Pr_0(A_n^c) + \sup_{\|\mu_0\|_\b \le R} \E_0\int_0^{\underline{\a}_n}\l_n(\a|Y)\, d\a\, 1_{A_n}\\
&\quad \quad{+}\sup_{\|\mu_0\|_\b \le R}\E_0 \int_{\underline{\a}_n}^\infty \l_n(\a|Y)\Pi_\a(\|\mu-\mu_0\|\geq M_n
L_nn^{-\b/(1+2\b + 2p)}|Y)\, d\a\, 1_{A_n}.
\end{split}
\end{equation}}
By Theorem \ref{thm: AlphaMagnitude} the first term on the right vanishes as $n \to \infty$, provided $l$ and $L$ in the definitions
of $\underline\a_n$ and $\bar\a_n$ are chosen small and large enough, respectively.  We will show that the
other terms tend to $0$ as well.

Observe that
$\l_n(\a\given Y) \propto L_n(\a)\l(\a)$,
where $L_n(\a) = \exp(\ell_n(\a))$, for $\ell_n$ the random function defined by \eqref{eq: ell}.
In Section~\ref{sec: UnderA} we have shown that on the interval $(0, \underline{\a}_n+1/\log n]$
\[
\ell'_n(\a) = \MM_n(\a) \gtrsim \frac{n^{1/(1+2\a+2p)}\log n}{1+2\a+2p},
\]
 on the event $A_n$. Therefore on the interval $(0, \underline{\a}_n]$ we have
\[
\ell_n(\a) < \ell_n(\underline{\a}_n) \leq \ell_n\Bigl(\underline{\a}_n + \frac{1}{2\log n}\Bigr) - \frac{{K}n^{1/(1+2\underline{\a}_n+2p)}}{1+2\underline{\a}_n+2p}
\]
for some $K > 0$ and on the interval $[\underline{\a}_n+1/(2\log n), \underline{\a}_n +1/\log n]$,
\[
\ell_n(\a) \geq \ell_n\Bigl(\underline{\a}_n + \frac{1}{2\log n}\Bigr).
\]
For the likelihood $L_n$ we have the corresponding bounds
\[
L_n(\a) < \exp\Bigl(-\frac{Kn^{1/(1+2\underline{\a}_n+2p)}}{1+2\underline{\a}_n+2p} \Bigr)L_n\Bigl(\underline{\a}_n +\frac{1}{2\log n}\Bigr)
\]
for $\a \in (0, \underline{\a}_n]$ and
\[
L_n(\a) \geq L_n\Bigl(\underline{\a}_n+\frac{1}{2\log n}\Bigr)
\]
for $\a \in [\underline{\a}_n + 1/(2\log n), \underline{\a}_n + 1/\log n]$ on the event $A_n$.
Using these estimates for $L_n$ we obtain the following upper bound for the second term on the right-hand side of \eqref{eq: FullPost}:
\begin{equation}\label{eq: SecondTermPost}
\begin{split}
&\sup_{\|\mu_0\|_\b \le R} \E_0 \frac{\int_{0}^{\underline{\a}_n}\l(\a)L_n(\a)\, d\a}{\int_{0}^{\infty}\l(\a)L_n(\a)\, d\a}\\
&\quad \leq \sup_{\|\mu_0\|_\b \le R} \E_0\exp\Bigl(-\frac{Kn^{1/(1+2\underline{\a}_n+2p)}}{1+2\underline{\a}_n+2p} \Bigr) \frac{L_n\Bigl(\underline{\a}_n + \frac{1}{2\log n}\Bigr)\int_{0}^{\underline{\a}_n}\l(\a)\, d\a}{L_n\Bigl(\underline{\a}_n + \frac{1}{2\log n}\Bigr)\int_{\underline{\a}_n+1/(2\log n)}^{\underline{\a}_n+1/\log n}\l(\a)\, d\a}\\
&\quad \leq \sup_{\|\mu_0\|_\b \le R} \exp\Bigl(-\frac{Kn^{1/(1+2\underline{\a}_n+2p)}}{1+2\underline{\a}_n+2p} \Bigr) \Bigl(\int_{\underline{\a}_n+1/(2\log n)}^{\underline{\a}_n+1/\log n}\l(\a)\, d\a\Bigr)^{-1}.
\end{split}
\end{equation}
From Lemma \ref{lem: abar} we know  that $\underline{\a}_n \geq \b/2$ for large enough $n$, hence by Assumption~\ref{ass: Hyperprior}, Lemma~\ref{lem: Hyperprior}, and the definition of $\underline{\a}_n$,
\[
\int_{\underline{\a}_n+1/(2\log n)}^{\underline{\a}_n+1/\log n}\l(\a)\, d\a  \geq C_1(2\log n)^{-C_2}\exp\bigl(-C_3\exp(\sqrt{\log n}/3)\bigr)
\]
for some $C_1, C_2, C_3 > 0$.
Therefore the right hand side of \eqref{eq: SecondTermPost} is bounded above by
a constant times
\[
\exp\Bigl(-\frac{K n^{1/(1+2\sqrt{\log n}+2p)}}{1+2\sqrt{\log n}+2p} \Bigr)(\log n)^{C_2}\exp\Bigl(C_3\exp\Bigl(\frac{\sqrt{\log n}}{3}\Bigr)\Bigr).
\]
It is easy to see that this quantity tends to $0$ as $n \to \infty$.

In bounding the third term on the right hand side of \eqref{eq: FullPost} we replace the supremum over
$\|\mu_0\|_\beta \le R$ by the suprema over the sets $P_n$ and  $Q_n$ defined in the beginning of Section \ref{sec: ProofSob}.
The supremum over
$Q_n$  is bounded above by
\[
 \sup_{\mu_0 \in Q_n} \E_0 \sup_{\a\in[\underline{\a}_n,\infty)} \Pi_\a(\|\mu-\mu_0\|\geq M_n
 L_nn^{-\b/(1+2\b + 2p)}|Y).
\]%
This goes to zero, as follows from Section~\ref{sec: 63} and Markov's inequality.
The supremum over $P_n$ we write as
%
{
\begin{equation}\label{eq: ThirdTermA}
\begin{split}
& \sup_{\mu_0\in P_n} \E_0 \Bigl(\int_{\underline{\a}_n}^{\overline{\a}_n}\l_n(\a|Y)\Pi_\a(\|\mu-\mu_0\|\geq M_n
L_nn^{-\b/(1+2\b + 2p)}|Y)\, d\a\\
&\qquad{+}\int_{\overline{\a}_n}^\infty\l_n(\a|Y)\Pi_\a(\|\mu-\mu_0\|\geq M_n
L_nn^{-\b/(1+2\b + 2p)}|Y)\, d\a\Bigr)1_{A_n}.
\end{split}
\end{equation}
}
The first term in \eqref{eq: ThirdTermA} is bounded above by
{
\[
\begin{split}
 \sup_{\mu_0 \in P_n}  \E_0\sup_{\a\in[\underline{\a}_n,\overline{\a}_n]} \Pi_\a(\|\mu-\mu_0\|\geq M_n
 L_nn^{-\b/(1+2\b + 2p)}|Y).
\end{split}
\]
}
This goes to zero, following from Sections \ref{sec: 61} and \ref{sec: brisk} and Markov's inequality. In Section~\ref{sec: OverA} we have shown that the differentiated log-likelihood function $\MM_n$ on the interval $[\overline{\a}_n, \infty)$ can increase maximally by
a multiple of
\[
\frac{n^{1/(1+2\overline{\a}_n+2p)}(\log n)^2}{1+2\overline{\a}_n+2p}.
\]
Moreover, in Section~\ref{sec: IntervalA} we have shown that for $\a \in [\overline{\a}_n-1/\log n,\overline{\a}_n]$,
\[
\ell'_n(\a) = \MM_n(\a) < -M\frac{n^{1/(1+2\overline{\a}_n+2p)}(\log n)^3}{1+2\overline{\a}_n+2p}
\]
on the event $A_n$, and $M$ can be made arbitrarily large by increasing the constant  $L$ in the definition of $\overline{\a}_n$. Therefore the integral of $\MM_n(\a)$ on $[\overline{\a}_n -  1/\log n, \overline{\a}_n -1/(2\log n)]$ is bounded above by
\[
 -\frac{M}{2}\frac{n^{1/(1+2\overline{\a}_n+2p)}(\log n)^2}{1+2\overline{\a}_n+2p},
\]
and by choosing a large enough constant $L$ in the definition of $\overline{\a}_n$ it holds that for some $N > 0$,
\[
\ell_n(\a) \leq \ell_n\Bigl(\overline{\a}_n - \frac{1}{2\log n}\Bigr) - N\frac{n^{1/(1+2\overline{\a}_n+2p)}(\log n)^2}{1+2\overline{\a}_n+2p}
\]
for $\a \in [\overline{\a}_n, \infty)$, and
\[
\ell_n(\a) \geq \ell_n\Bigl(\overline{\a}_n - \frac{1}{2\log n}\Bigr)
\]
for $\a \in [\overline{\a}_n - 1/\log n, \overline{\a}_n - 1/(2\log n)]$. These bounds lead to the following bounds for the likelihood:
\[
L_n(\a) \leq L_n\Bigl(\overline{\a}_n - \frac{1}{2\log n}\Bigr)\exp\Bigl(- N\frac{n^{1/(1+2\overline{\a}_n+2p)}(\log n)^2}{1+2\overline{\a}_n+2p}\Bigr)
\]
for $\a \in [\overline{\a}_n, \infty)$, and
\[
L_n(\a) \geq L_n\Bigl(\overline{\a}_n - \frac{1}{2\log n}\Bigr)
\]
for $\a \in [\overline{\a}_n - 1/\log n, \overline{\a}_n - 1/(2\log n)]$. Similarly to the upper bound for the second term of \eqref{eq: FullPost} we now write
\[
\begin{split}
&\sup_{\mu_0\in P_n} \E_0\int_{\overline{\a}_n}^\infty \l_n(\a|Y)\, d\a\leq \sup_{\mu_0\in P_n} \E_0\frac{\int_{\overline{\a}_n}^\infty \l(\a)L_n(\a)\, d\a}{\int_0^\infty \l(\a)L_n(\a)\, d\a} \\
&\qquad \leq \sup_{\mu_0\in P_n}  \exp\Bigl(- N\frac{n^{1/(1+2\overline{\a}_n+2p)}(\log n)^2}{1+2\overline{\a}_n+2p}\Bigr)\frac{\int_{\overline{\a}_n}^\infty \l(\a)\, d\a}{\int_{\overline{\a}_n-1/\log n}^{\overline{\a}_n-1/(2\log n)} \l(\a)\, d\a}.
\end{split}
\]
Since $\overline{\a}_n \geq \underline{\a}_n \geq \b/2$ for $n$ large enough, Assumption~\ref{ass: Hyperprior} and Lemma~\ref{lem: Hyperprior} imply that
\[
\frac{\int_{\overline{\a}_n}^\infty \l(\a)\, d\a}{\int_{\overline{\a}_n-1/\log n}^{\overline{\a}_n-1/(2\log n)} \l(\a)\, d\a} \leq C_4(\log n)^{C_5}\exp\bigl(C_6\overline{\a}_n^{C_7}\bigr).
\]
Since $\overline{\a}_n \leq \log n/(2\log 2)-1/2-p$ {for $\mu_0\in P_n$}, the right-hand side of the preceding display is bounded above by
\[
C_4\exp\bigl(-2C_{9}(\log 2)(\log n)\bigr)(\log n)^{C_5}\exp\Bigl(C_6\Bigl(\frac{\log n}{2\log 2}-\frac{1}{2}-p\Bigr)^{C_7}\Bigr),
\]
which tends to zero for any fixed constant $C_7$ smaller than $1$.

\begin{lemma}\label{lem: Hyperprior}

{

Suppose that for $c_1 > 0$, $c_2 \geq 0$, $c_3 \in \RR$, with $c_3>1$ if $c_2=0$,  and $c_4>0$, the prior  density $\l$ satisfies
\[
c_4^{-1}\a^{-c_3} \exp(-c_2\a) \leq \l(\a) \leq c_4\a^{-c_3} \exp(-c_2\a)
\]
for $\a \geq c_1$.
}
Then there exist positive constants $C_1, \ldots, C_6$ and $C_7 < 1$ depending on $c_1$ only such that for all $x \geq c_1$, every $\d_n \to 0$, and $n$ large enough
\[
\int_{x+\d_n}^{x+2\d_n} \l(\a)\, d\a \geq C_1\d_n^{C_2}\exp\Bigl(-C_3\exp\Bigl(\frac{x}{3}\Bigr)\Bigr)
\]
and
\[
\frac{\int_x^\infty \l(\a)\, d\a}{\int_{x-2\d_n}^{x-\d_n}\l(\a)\, d\a} \leq C_4\d_n^{-C_5}\exp(C_6x^{C_7}).
\]
\end{lemma}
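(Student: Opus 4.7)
The proof reduces to directly estimating the integrals by substituting the two-sided bound on $\l$ and then converting ordinary polynomial/exponential expressions into the forms requested in the statement. I would split the argument according to whether $c_2>0$ or $c_2=0$, although the two cases run in parallel.

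First, for the lower bound, apply the hypothesis to get
\[
\int_{x+\d_n}^{x+2\d_n}\l(\a)\,d\a \;\geq\; c_4^{-1}\int_{x+\d_n}^{x+2\d_n}\a^{-c_3}e^{-c_2\a}\,d\a
\;\geq\; c_4^{-1}\d_n\,m(x),
\]
where $m(x)$ is the minimum of $\a\mapsto \a^{-c_3}e^{-c_2\a}$ on $[x+\d_n,x+2\d_n]$. Since $\d_n\to 0$ and $x\ge c_1$, elementary monotonicity shows $m(x)\ge c\, x^{-|c_3|}e^{-c_2(x+1)}$ for a constant $c=c(c_1,c_3)>0$ and all $n$ large enough. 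The desired form then follows from the simple fact that $|c_3|\log x + c_2 x \le C_3 e^{x/3}$ for all $x\ge c_1$, provided $C_3$ is chosen large enough depending only on $c_1,c_2,c_3$; this lets me absorb the polynomial and linear-exponential factors into $\exp(-C_3 e^{x/3})$, leaving only the factor $\d_n$ (so $C_2=1$ suffices).

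For the ratio bound, I would first estimate the numerator. If $c_2>0$, integration by parts (or the classical tail bound for $\a^{-c_3}e^{-c_2\a}$) gives
\[
\int_x^\infty \l(\a)\,d\a \;\leq\; c_4\int_x^\infty \a^{-c_3}e^{-c_2\a}\,d\a \;\leq\; C\,x^{|c_3|}e^{-c_2 x},
\]
for $x\ge c_1$ and $C=C(c_1,c_2,c_3)$. If $c_2=0$, the assumption $c_3>1$ yields $\int_x^\infty\l(\a)\,d\a\le c_4 x^{1-c_3}/(c_3-1)$. For the denominator, by the same monotonicity consideration as above, for $n$ large so that $x-2\d_n\ge c_1/2$,
\[
\int_{x-2\d_n}^{x-\d_n}\l(\a)\,d\a \;\geq\; c_4^{-1}\d_n\,(x+1)^{-|c_3|}e^{-c_2 x}.
\]
Dividing the two bounds kills the dominant exponential $e^{-c_2x}$ (leaving a harmless $e^{c_2\d_n}$ factor, which is bounded), and leaves at worst a factor $\d_n^{-1}x^{2|c_3|}$ when $c_2>0$ and $\d_n^{-1}x$ when $c_2=0$. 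Finally I would use the trivial estimate $x^{a}\le\exp(C_6 x^{C_7})$ valid for any fixed $a>0$, any $0<C_7<1$ and suitable $C_6=C_6(a,c_1)$, to absorb the polynomial-in-$x$ factors into the required form $e^{C_6 x^{C_7}}$, with an arbitrary choice such as $C_7=1/2$.

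The argument is essentially routine; the only mild subtleties are (i) treating the sign of $c_3$ carefully, since $\a^{-c_3}$ is monotone increasing when $c_3<0$ and monotone decreasing when $c_3>0$, which flips the endpoint at which the minimum/maximum of the integrand is attained, and (ii) using the $c_3>1$ assumption precisely where needed, namely to ensure integrability of the tail in the case $c_2=0$. Neither step requires new ideas beyond elementary calculus, and the remainder amounts to bookkeeping of the constants $C_1,\dots,C_7$ in terms of $c_1,\dots,c_4$.
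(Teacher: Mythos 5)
Your argument is correct and is precisely the ``straightforward calculus'' the paper invokes without writing out: bound $\l$ above and below by $\a^{-c_3}e^{-c_2\a}$, estimate the short integrals by $\d_n$ times the extremal value of the integrand on the interval (minding the sign of $c_3$), control the tail integral via the exponential factor when $c_2>0$ or via $c_3>1$ when $c_2=0$, and absorb the leftover powers of $x$ and $\log x$ into $\exp(-C_3e^{x/3})$ and $\exp(C_6x^{C_7})$, so that $C_2=C_5=1$ and, say, $C_7=1/2$ work. The one point to tidy is that for $x$ near $c_1$ the interval $[x-2\d_n,x-\d_n]$ dips below $c_1$, where the stated hypothesis gives no lower bound on $\l$; since Assumption~\ref{ass: Hyperprior} supplies the two-sided bound for every threshold, one simply invokes it with threshold $c_1/2$ (which is what your ``$x-2\d_n\ge c_1/2$'' step implicitly requires), exactly as in the application of the lemma in the proof of Theorem~\ref{thm: ConvergenceHB}.
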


\begin{proof}
The proof only involves straightforward calculus.
%
%
%
%
\end{proof}

\section{Auxiliary lemmas}\label{sec: Appendix}

In this section we collect several lemmas that we use throughout the proofs to
upper and lower bound certain sums.

\begin{lemma}\label{lem: LogSeries}
Let $c > 0$ and $r \geq 1+c$.
\begin{itemize}
	\item[(i)] For $n \geq 1$
\[
\sum_{i=1}^\infty \frac{n \log i}{i^{r}+n} \leq \Bigl(2+\frac{2}{c}+\frac{2}{c^2\log 2}\Bigr)\frac{n^{1/r}\log n}{r}.
\]
	\item[(ii)] If $r > (\log n)/(\log 2)$, then for $n \geq 1$
\[
\sum_{i=1}^\infty \frac{n \log i}{i^{r}+n} \leq \Bigl(1 +\frac{2}{c} + \frac{2}{c^2\log 2}\Bigr)(\log 2)n2^{-r}.
\]
\end{itemize}
\end{lemma}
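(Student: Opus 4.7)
The plan is to split the sum at the transition index $i_0 := \lfloor n^{1/r}\rfloor$, where $i^r$ crosses $n$. On the ``low'' range $1 \le i \le i_0$ I would bound $n/(i^r+n) \le 1$ and use $\sum_{i=1}^{i_0}\log i \le i_0\log i_0 \le n^{1/r}(\log n)/r$, which already matches the main term of the target. On the ``high'' range $i > i_0$ I would bound $n/(i^r+n) \le n/i^r$ and decompose the indices into dyadic blocks $B_k := [2^k i_0,\, 2^{k+1} i_0)$ for $k \ge 0$. On $B_k$ apply the uniform bounds $\log i \le \log i_0 + (k+1)\log 2$, $i^r \ge (2^k i_0)^r$, and $|B_k| = 2^k i_0$, so the contribution of $B_k$ to $\sum \log i/i^r$ is at most $(\log i_0 + (k+1)\log 2)\, i_0^{1-r}\, 2^{k(1-r)}$.

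Summing over $k$ gives geometric series $\sum_k q^k = 1/(1-q)$ and $\sum_k (k+1)q^k = 1/(1-q)^2$ with $q := 2^{1-r}$. The key sharp estimate is
\[
\frac{1}{1-2^{1-r}} \le 1 + \frac{1}{(r-1)\log 2},
\]
which follows from the elementary inequality $1-e^{-x} \ge x/(1+x)$ applied with $x=(r-1)\log 2$. Together with $r-1 \ge c$, this produces the factors $1/c$ and $1/(c^2\log 2)$ in the stated constant. Combined with the bound $n/i_0^{r-1} \lesssim n^{1/r}$, valid when $i_0 \ge 2$ (i.e.\ $n \ge 2^r$), this completes part (i) in that regime. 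In the complementary regime $n < 2^r$, the bound of part (ii) immediately implies the bound of part (i), since in that range its right-hand side is no larger.

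For part (ii), the hypothesis $r > (\log n)/\log 2$ gives $2^r > n$, so for every $i \ge 2$ one has $i^r \ge 2^r > n$ and thus $n/(i^r+n) \le n/i^r$, while the $i=1$ term vanishes. I would single out $i=2$ (contribution $\le n\log 2/2^r$) and, using that $\log x/x^r$ is decreasing on $[2,\infty)$ when $r > 1/\log 2$, bound the rest by an integral computed via integration by parts:
\[
\sum_{i \ge 3}\frac{\log i}{i^r} \le \int_2^\infty \frac{\log x}{x^r}\,dx = 2^{1-r}\Bigl[\frac{\log 2}{r-1} + \frac{1}{(r-1)^2}\Bigr].
\]
Multiplying by $n$, collecting, and using $r-1 \ge c$ yields
\[
\sum_{i=1}^\infty \frac{n\log i}{i^r+n} \le n\,2^{-r}(\log 2)\Bigl[1 + \tfrac{2}{c} + \tfrac{2}{c^2\log 2}\Bigr].
\]

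The main obstacle is obtaining the explicit constants: a naive integral comparison $\int_{i_0}^\infty \log x/x^r\,dx$ overestimates $n/i_0^{r-1}$ by a factor of order $2^{r-1}$ in some regimes, which is why the dyadic decomposition combined with the sharp tail estimate $1/(1-2^{-(r-1)}) \le 1 + 1/((r-1)\log 2)$ is essential to recover the specific coefficient $2 + 2/c + 2/(c^2\log 2)$; the factor $\log 2$ in the denominator of the constant is an artifact of the dyadic ratio $2$.
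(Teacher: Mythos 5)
Your part (ii) is correct and is essentially the paper's own argument (isolate $i=2$, compare the tail to $\int_2^\infty x^{-r}\log x\,dx$, use $r-1\ge c$). The gap is in part (i), in the step ``$n/i_0^{r-1}\lesssim n^{1/r}$, valid when $i_0\ge 2$''. This is false with any absolute implied constant: since $i_0=\lfloor n^{1/r}\rfloor$ only satisfies $i_0\ge n^{1/r}/2$, all you can conclude is $n/i_0^{r-1}\le 2^{r-1}n^{1/r}$, and the factor $2^{r-1}$ is really there. Take $n=\lceil 2.9^r\rceil$, so that $i_0=2$ and $n/i_0^{r-1}\approx 2(2.9/2)^r$, which exceeds $n^{1/r}\approx 2.9$ by a factor growing like $1.45^{r}$. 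Your closing remark attributes the lost factor $2^{r-1}$ to the integral comparison and credits the dyadic decomposition with removing it, but the dyadic blocks do not help here: the loss comes from anchoring the high range at the \emph{floor} $\lfloor n^{1/r}\rfloor$, whose $r$-th power can be smaller than $n$ by a factor of order $(3/2)^r$, so that already the first block $B_0$ contributes $n\,i_0^{1-r}\log i_0 \gg n^{1/r}(\log n)/r$.

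The repair is small: start the high range at $i_1=\lceil n^{1/r}\rceil$ (the low range $i\le i_1-1\le n^{1/r}$ still yields the first term). Then $i_1^{\,r}\ge n$, hence $n\,i_1^{1-r}=i_1\cdot n/i_1^{\,r}\le i_1\le n^{1/r}+1\le \tfrac32 n^{1/r}$ once $n^{1/r}\ge 2$, and your dyadic blocks anchored at $i_1$, together with the correct estimate $1/(1-2^{1-r})\le 1+1/((r-1)\log 2)$, close the argument. This is precisely where the paper is careful: it splits at the ceiling, bounds the single boundary term by $n\log\lceil n^{1/r}\rceil/\lceil n^{1/r}\rceil^{r}\le 2(\log n)/r$ using $n/\lceil n^{1/r}\rceil^{r}\le 1$, and compares the remainder to $\int_{n^{1/r}}^\infty x^{-r}\log x\,dx$; once the anchor is right, the ordinary integral comparison already gives the stated constant and no dyadic refinement is needed. (Also, your reduction of part (i) to part (ii) in the regime $n<2^r$ rests on the unproved assertion that the right-hand side of (ii) is no larger; this needs the one-line monotonicity argument $r2^{-r}\le (\log n)/(n\log 2)$ for $r>(\log n)/\log 2$, which the paper supplies.)
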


\begin{proof}
First consider $r\leq (\log n)/(\log 2)$, which implies that $n^{1/r} \geq 2$. We split the series in two parts, and bound the denominator $i^{r}+n$ by $n$ or $i^{r}$. Since $\log i$ is increasing, we see that
\[
\sum_{i=1}^{\lfloor n^{1/r}\rfloor}\log i \leq  \frac{n^{1/r}\log n}{r}.
\]
Since $f(x) = x^{-\g}\log x$ is decreasing for $x \geq e^{1/\g}$, we see that $i^{-r}\log i$ is decreasing on interval $\bigl[\lceil n^{1/r}\rceil, \infty\bigr)$ for $n \geq e$. Therefore
\[
\sum_{i=\lceil n^{1/r}\rceil}^{\infty}\frac{n\log i}{i^{r}} \leq n \frac{\log \lceil n^{1/r}\rceil}{\lceil n^{1/r}\rceil^{r}} + n\int_{\lceil n^{1/r}\rceil}^\infty \frac{\log x}{x^{r}}\, dx.
\]
Since $\lceil x\rceil/x \leq 2$ for $x \geq 1$, and $n^{1/r} \geq 2$,
\[
n \frac{\log \lceil n^{1/r}\rceil}{\lceil n^{1/r}\rceil^{r}} \leq 2 \log n^{1/r} \leq \frac{n^{1/r}\log n}{r}.
\]
Moreover
\[
\int_{\lceil n^{1/r}\rceil}^\infty \frac{\log x}{x^{r}}\, dx \leq \int_{n^{1/r}}^\infty \frac{\log x}{x^r}\, dx= n^{-1+1/r}\frac{(r-1)\log n^{1/r}+1}{(r-1)^2}.
\]
Since $r \geq 1+c$, we have
\[
\frac{\log n^{1/r}}{r-1} \leq \frac{1}{c}\cdot \frac{\log n}{r},
\qquad
\frac{1}{(r-1)^2}  \leq \frac{\log n^{1/r}}{(r-1)^2\log 2} \leq \frac{1}{c^2\log 2}\cdot \frac{\log n}{r}.
\]
This proves (i) for the case $r\leq (\log n)/(\log 2)$.

We now consider  $r > (\log n)/(\log 2)$, which implies that $n^{1/r} < 2$. We have
\[
\sum_{i=2}^\infty \frac{n\log i}{i^{r}+n}\leq n\sum_{i=2}^\infty \frac{\log i}{i^{r}} \leq n2^{-r}\log 2 + n\int_2^\infty x^{-r}\log x \, dx,
\]
by monotonicity of the function $f$ defined above (with $\g = r$). We have
\[
\int_2^\infty x^{-r}\log x \, dx = 2^{1-r}\frac{(r-1)\log 2 + 1}{(r-1)^2},
\]
and since $r \geq 1+c$
\[
\frac{\log 2}{r-1}\leq \frac{\log 2}{c}, \qquad\qquad\qquad \frac{1}{(r-1)^2}\leq \frac{1}{c^2},
\]
which finishes the proof of (ii).

To complete the proof of (i),  we consider the function $f(x) = 2^{-x}x$ and note that it is decreasing for $x > 1/\log 2$. Therefore $n2^{-r} = (n2^{-r}r)/r \leq (\log n)/(r\log 2)$, for $n \geq 3$. Since $1 \leq n^{1/r}$, we get the desired result.
\end{proof}

\begin{lemma}\label{lem: LogM}
For any $m>0$, $l \geq 1$, $r_0>0$, $r \in (0, r_0]$, $s \in (0, rl-2]$, and $n \geq e^{2mr_0}$
\[
\sum_{i=1}^\infty \frac{i^{s}(\log i)^m}{(i^{r}+n)^l} \leq 4n^{(1+s-lr)/r}\frac{(\log n)^m}{r^m}.
\]
The same upper bound holds for $m = 0$, $r \in (0, \infty)$, $s \in (0, rl-1)$, and $n \geq 1$.
\end{lemma}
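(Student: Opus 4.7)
The plan is the standard split of the sum at the point $i \asymp n^{1/r}$, where the denominator $i^r+n$ transitions from being dominated by $n$ to being dominated by $i^r$. Set $N=\lceil n^{1/r}\rceil$ and write the infinite sum as a head $\sum_{i=1}^{N}$ plus a tail $\sum_{i=N+1}^\infty$.

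For the head, I would use three uniform bounds on $\{1,\ldots,N\}$: $(i^r+n)^l\ge n^l$, $i^s\le (n^{1/r})^s=n^{s/r}$, and $\log i\le (\log n)/r$. Together with the trivial count $N\le n^{1/r}+1\le 2n^{1/r}$ this gives
\[
\sum_{i=1}^{N}\frac{i^s(\log i)^m}{(i^r+n)^l}\;\le\; 2\,n^{1/r}\cdot\frac{n^{s/r}}{n^l}\cdot\frac{(\log n)^m}{r^m}\;=\;2\,n^{(1+s-lr)/r}\frac{(\log n)^m}{r^m}.
\]
No integration is needed at this step and the constant is absorbed into the final $4$.

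For the tail, I would drop the $n$ in the denominator to get $(i^r+n)^l\ge i^{rl}$ and then pass to an integral. The function $f(x)=(\log x)^m/x^{rl-s}$ satisfies $f'(x)\le 0$ for $x\ge e^{m/(rl-s)}$; the hypotheses $s\le rl-2$ and $n\ge e^{2mr_0}$ ensure that this threshold is below $n^{1/r}$, so $f$ is decreasing on $[N,\infty)$ and
\[
\sum_{i=N+1}^\infty\frac{i^s(\log i)^m}{(i^r+n)^l}\;\le\;\sum_{i=N+1}^\infty f(i)\;\le\;\int_{n^{1/r}}^\infty\frac{(\log x)^m}{x^{rl-s}}\,dx.
\]
Then I would substitute $u=\log x$ and set $a=(\log n)/r$, $\gamma=rl-s-1\ge 1$, turning the integral into $\int_a^\infty u^m e^{-\gamma u}\,du$. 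Repeated integration by parts yields the explicit formula
\[
\int_a^\infty u^m e^{-\gamma u}\,du\;=\;e^{-\gamma a}\sum_{k=0}^{m}\frac{m!}{(m-k)!\,\gamma^{k+1}}\,a^{m-k},
\]
and since $e^{-\gamma a}=n^{(1+s-lr)/r}$ this already exposes the desired power of $n$.

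The main (only) delicate point is controlling the finite sum from the integration by parts so as to obtain the constant $4$. After factoring out $(\log n/r)^m$, its value is
\[
\sum_{k=0}^{m}\frac{m!}{(m-k)!\,\gamma^{k+1}}\left(\frac{r}{\log n}\right)^{\!k}.
\]
Using $m!/(m-k)!\le m^k$, $\gamma\ge 1$, and the crucial inequality $r/\log n\le 1/(2m)$ (which is where the hypothesis $n\ge e^{2mr_0}$ is used, via $\log n\ge 2mr_0\ge 2mr$), the $k$th term is at most $(1/2)^k/\gamma^{k+1}\le (1/2)^k$, so the whole sum is at most $2$. This yields $\int_{n^{1/r}}^\infty f\le 2\,n^{(1+s-lr)/r}(\log n)^m/r^m$, which combined with the head estimate gives the claimed bound with constant $4$. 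For the case $m=0$ the tail is elementary, $\int_{n^{1/r}}^\infty x^{-(rl-s)}\,dx=n^{(1+s-lr)/r}/(rl-s-1)$, and no condition on $n$ is needed, so the same argument goes through.
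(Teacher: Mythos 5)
Your decomposition and head estimate coincide with the paper's proof; the only substantive difference is how the tail is evaluated. The paper peels off the factor $x^{-1/2}(\log x)^m$, which is decreasing on $[n^{1/r},\infty)$ exactly because $n\ge e^{2mr_0}$, bounds it by $n^{-1/(2r)}(\log n)^m/r^m$, and then sums/integrates the remaining pure power $x^{1/2+s-rl}$, using $s\le rl-2$ to keep the constant at $3$. You instead compute $\int_{n^{1/r}}^\infty(\log x)^m x^{s-rl}\,dx$ exactly via $u=\log x$ and repeated integration by parts, and use $n\ge e^{2mr_0}$ (through $r/\log n\le 1/(2m)$) to make the resulting finite sum geometric. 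Both are valid executions of the same split-and-compare strategy and both land on the constant $4$; yours has the mild aesthetic advantage of identifying the integral explicitly.

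Two caveats. First, the lemma is stated for \emph{any} real $m>0$, but your identity
\[
\int_a^\infty u^m e^{-\gamma u}\,du=e^{-\gamma a}\sum_{k=0}^{m}\frac{m!}{(m-k)!\,\gamma^{k+1}}a^{m-k}
\]
presupposes that $m$ is a nonnegative integer; the paper's monotonicity argument works for arbitrary real $m>0$. This is repairable (for $u\ge a$ bound $u^m\le a^{m-\lceil m\rceil}u^{\lceil m\rceil}$ and apply your formula with $\lceil m\rceil$, at the cost of slightly strengthening the condition on $n$), but as written your proof covers only integer $m$ — which, to be fair, is all the paper ever invokes. Second, for $m=0$ your tail bound is $n^{(1+s-lr)/r}/(rl-s-1)$, and since the hypothesis only requires $s<rl-1$, the factor $1/(rl-s-1)$ is unbounded, so "the same argument goes through" does not deliver the absolute constant $4$ near that endpoint. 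This is arguably a defect of the lemma rather than of your proof: taking $r=1$, $l=2$, $s=1-\e$ one checks that the tail alone is of order $n^{-\e}/\e$, so no absolute constant can work uniformly in $s$; the paper's own one-line treatment of $m=0$ has the same issue, and in all of its applications $rl-s-1$ is bounded away from $0$.
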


\begin{proof}
We deal with this sum by splitting the sum in the parts $i \leq n^{1/r}$ and $i > n^{1/r}$. In the first range we bound the sum by
\[
\sum_{i=1}^{n^{1/r}}n^{-l}i^{s}(\log i)^m \leq n^{1/r}n^{-l+s/r}\frac{(\log n)^m}{r^m},
\]
by monotonicity of the function $f(x)= x^{s}(\log x)^m$.

Suppose that $m > 0$. The derivative of the function $f(x) = x^{-1/2}(\log x)^m$ is $f'(x) = x^{-3/2}(\log x)^{m-1}(m-(\log x)/2)$, hence it is monotone decreasing for $x \geq e^{2m}$. Since $n^{1/r}\geq n^{1/r_0}$ and $n>e^{2mr_0}$, the function $f$ is decreasing on interval $[n^{1/r}, \infty)$. Therefore we bound the sum over the second range by
\[
\sum_{i=n^{1/r}}^\infty i^{s-rl}(\log i)^m \leq n^{-1/(2r)}\frac{(\log n)^m}{r^m}\sum_{i=n^{1/r}}^\infty i^{1/2+s-rl}.
\]
Since $s \leq rl-2$, $i^{1/2+s-rl}$ is decreasing and $rl-s-3/2 \geq 1/2$. We get
\begin{align*}
\sum_{i=n^{1/r}}^\infty i^{1/2+s-rl}
&\leq n^{(1/2+s-rl)/r} + \int_{n^{1/r}}^\infty x^{1/2+s-rl}\, dx\\
&  =n^{(1/2+s-rl)/r} + \frac{1}{-3/2-s+rl}n^{(3/2+s-rl)/r}\\
&\leq 3n^{(3/2+s-rl)/r}.
\end{align*}
In the case $m=0$, we use monotonicity of $i^{s-rl}$ for all $i \geq 1$.
\end{proof}

\begin{lemma}\label{lem: LogLower}
For any $r \in (1, (\log n)/(2\log(3e/2))]$, and $\g > 0$,
\[
\sum_{i=1}^{\infty}\frac{n^\g\log i}{(i^{r}+n)^\g} \geq \frac{1}{3\cdot 2^{\g}r}n^{1/r}\log n.
\]
\end{lemma}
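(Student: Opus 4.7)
The plan is to retain only the summands with $i \le n^{1/r}$, where the denominator is controlled by $n$. For these indices we have $i^{r}+n \le 2n$, hence $n^\gamma/(i^{r}+n)^\gamma \ge 2^{-\gamma}$. Setting $M = n^{1/r}$ and $N = \lfloor M \rfloor$, and discarding the remaining non-negative terms, the claim reduces to the purely arithmetic inequality
$$\sum_{i=1}^N \log i \ \ge\ \frac{M \log M}{3} \ =\ \frac{n^{1/r} \log n}{3r},$$
the second equality because $\log M = (\log n)/r$.

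For the sum of logarithms I would use the standard integral comparison. Since $\log$ is increasing on $[1,\infty)$, we have $\int_k^{k+1}\log x\,dx \le \log(k+1)$ for every $k \ge 1$; summing over $k=1,\dots,N-1$ gives the classical Stirling-type bound
$$\sum_{i=1}^N \log i \ \ge\ \int_1^N \log x\,dx \ =\ N\log N - N + 1.$$

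The main obstacle is then quantitative: showing that $N \log N - N + 1 \ge (M/3)\log M$ throughout the allowed range of $r$. The hypothesis $r \le (\log n)/(2\log(3e/2))$ translates exactly into $M = n^{1/r} \ge (3e/2)^2 = 9e^2/4$, and in particular $N \ge M - 1 \ge \alpha M$ with $\alpha = 1 - 4/(9e^2)$, together with $\log N \ge \log M - \log(1/\alpha)$. Since $x \mapsto x\log x - x + 1$ is increasing on $[1,\infty)$, substitution reduces the target inequality to a linear lower bound on $\log M$ whose threshold depends only on $\alpha$. One checks numerically that this threshold lies safely below $\log((3e/2)^2) = 2 + 2\log(3/2)$, so the hypothesis on $r$ provides comfortable slack. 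Multiplying the resulting bound by the factor $2^{-\gamma}$ from the first step yields the claimed inequality with constant $1/(3\cdot 2^\gamma r)$.
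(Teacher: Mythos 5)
Your proof is correct and follows essentially the same route as the paper's: truncate the sum to $i\le n^{1/r}$, use $i^r+n\le 2n$ to extract the factor $2^{-\gamma}$, bound $\sum_{i\le N}\log i$ from below by $\int_1^N\log x\,dx$, and invoke the hypothesis on $r$ (equivalently $n^{1/r}\ge(3e/2)^2$) to absorb the constants. The paper handles the floor by the bound $\lfloor x\rfloor\ge 2x/3$ before integrating, whereas you keep $N=\lfloor M\rfloor$ and finish with a slightly more brute-force numerical check; both variants close the argument, and your quoted threshold ($\log M\gtrsim 1.65$ versus the available $\log M\ge 2+2\log(3/2)\approx 2.81$) is indeed satisfied with room to spare.
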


\begin{proof}
In the range $i\leq n^{1/r}$ we have $i^r + n \leq 2n$, thus
\[
\sum_{i=1}^{\infty}\frac{n^\g\log i}{(i^{r}+n)^\g} \geq \frac{1}{2^\g}\sum_{i=1}^{\lfloor n^{1/r}\rfloor}\log i \geq \frac{1}{2^\g}\int_1^{\lfloor n^{1/r}\rfloor}\log x\, dx \geq \frac{1}{2^\g}\int_1^{(2/3) n^{1/r}}\log x\, dx,
\]
since $n^{1/r}\geq 2$ and $\lfloor x\rfloor \geq 2x/3$ for $x \geq 2$. The latter integral equals $(2/3)n^{1/r}\bigl(\log ((2/3)n^{1/r}) - 1\bigr) + 1.$
Since $\log n \geq 2\log(3e/2)r$ implies that $(\log n)/(2r) \geq \log(3e/2)$, we have
\[
\frac{2}{3}n^{1/r}\Bigl(\log\Bigl(\frac{2}{3}n^{1/r}\Bigr) - 1\Bigr) = \frac{2}{3}n^{1/r}\Bigl(\frac{1}{r}\log n - \log\frac{3e}{2}\Bigr)\geq \frac{1}{3r}n^{1/r}\log n.
\]
\end{proof}

\begin{lemma}\label{lem: Botond'sTrick}
Let $m$, $i$, $r$, and $\xi$ be positive reals. Then for $n \geq e^m$
\[
\frac{ni^{r}\bigl(r\log i\bigr)^m}{(i^{r}+n)^2}\leq (\log n)^m, \qquad \text{and} \qquad \frac{n^\xi\bigl(r\log i\bigr)^{\xi m}}{(i^{r}+n)^\xi}\leq (\log n)^{\xi m}.
\]
\end{lemma}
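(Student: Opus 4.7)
The plan is to reduce both inequalities to a single, simpler inequality and then prove that by case analysis.

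First, I would observe that the second inequality is just the $\xi$-th power of the estimate $n(r\log i)^m/(i^r+n) \le (\log n)^m$, since
\[
\frac{n^\xi(r\log i)^{\xi m}}{(i^r+n)^\xi}=\Bigl(\frac{n(r\log i)^m}{i^r+n}\Bigr)^\xi.
\]
Moreover, using the trivial bound $i^r/(i^r+n)\le 1$, the first inequality follows from
\[
\frac{ni^r(r\log i)^m}{(i^r+n)^2}\le\frac{n(r\log i)^m}{i^r+n},
\]
so both statements reduce to proving $n(r\log i)^m/(i^r+n)\le(\log n)^m$ under the assumption $n\ge e^m$ (and $i\ge 1$, as is the case in all applications).

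Next, I would make the substitution $x=i^r$, so that $r\log i=\log x$, reducing the claim to
\[
\frac{n(\log x)^m}{x+n}\le(\log n)^m\qquad\text{for }x\ge 1,\ n\ge e^m,
\]
and split into the two cases $x\le n$ and $x>n$. When $x\le n$, the factor $n/(x+n)$ is at most $1$ and $(\log x)^m\le(\log n)^m$ by monotonicity of $t\mapsto t^m$ on $[0,\infty)$, which handles that range immediately. When $x>n$, I would bound $n/(x+n)\le n/x$, reducing matters to the inequality $(\log x)^m/x\le(\log n)^m/n$ for $x>n$.

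The only real content is then the monotonicity step: I would set $g(x)=(\log x)^m/x$ and compute
\[
g'(x)=\frac{(\log x)^{m-1}(m-\log x)}{x^2},
\]
so that $g$ is non-increasing on $[e^m,\infty)$. Since by hypothesis $n\ge e^m$, one has $x>n\ge e^m$, hence $g(x)\le g(n)$, which is exactly the bound required; this is also the only place the hypothesis $n\ge e^m$ is used, so this is the ``main obstacle'' (though really it is just the point of the lemma). Combining the two cases yields the reduced inequality, and the two displayed bounds follow.
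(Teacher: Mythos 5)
Your proof is correct and follows essentially the same route as the paper: after your substitution $x=i^{r}$ your case split $x\le n$ versus $x>n$ is exactly the paper's split $i\le n^{1/r}$ versus $i>n^{1/r}$, and your monotonicity of $g(x)=(\log x)^m/x$ on $[e^m,\infty)$ is the same fact the paper uses (there phrased as $i^{-r}(\log i)^m$ decreasing for $i\ge e^{m/r}$). The only cosmetic difference is that you first reduce the first display to the $\xi=1$ case of the second via $i^{r}/(i^{r}+n)\le 1$, whereas the paper proves the first directly and notes the second is similar; your explicit remark that $i\ge 1$ is needed (so that $\log i\ge 0$) is a point the paper leaves implicit.
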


\begin{proof}
Assume first that $i\leq n^{1/r}$, then the left hand side of the first inequality is bounded above by
\[
\frac{n^2\bigl(r\log n^{1/r}\bigr)^m}{n^2}=(\log n)^m.
\]
Next assume that $i> n^{1/r}$. The derivative of the function $f(x)=x^{-c}(\log x)^m$ is $f'(x)=x^{-c-1}(\log x)^{m-1}\big(-c(\log x)+m\big)$, hence $f(x)$ is monotone decreasing for $x\geq e^{m/c}$. Therefore the function $i^{-r}(\log i)^m$ is monotone decreasing for $i\geq e^{m/r}$ and since by assumption $i> n^{1/r}$, we get that for $n\geq e^m$ the function $f(i)=i^{-r}(\log i)^m$ takes its maximum at $i=n^{1/r}$. Hence the left hand side of the inequality is bounded above by
\begin{align*}
n\bigl(r\log i\bigr)^mi^{-r}\leq n r^m \bigl(\log n^{1/r}\bigr)^mn^{-1}= (\log n)^m.
\end{align*}
The second inequality can be proven similarly.
\end{proof}

%

\bibliographystyle{acm}
\bibliography{inverseadaptive_revision}

\end{document}